\documentclass{amsart}

\usepackage{amsmath, amssymb, amsfonts, amsthm, amscd, amsopn, amstext, amsxtra, euscript}

\usepackage{graphicx} % For images and rotatebox
\usepackage{xcolor}   % For color support

\usepackage[utf8]{inputenc}

\usepackage{enumitem}
\usepackage{booktabs}

\usepackage[all]{xy}

\usepackage[initials, msc-links]{amsrefs}  % alphabetical by author, given names as initials;
\usepackage{hyperref}
\hypersetup{
  colorlinks   = true,
  urlcolor     = blue,
  linkcolor    = blue,
  citecolor    = blue,
  pdftitle     = {Stacky Heights, Entropy, and Zeta Functions of Weighted Hypersurfaces over Finite Fields},
  pdfauthor    = {Sajad Salami and Tanush Shaska},
  pdfsubject   = {Arithmetic geometry; weighted projective stacks over finite fields},
  pdfkeywords  = {weighted projective stack, inertia stack, twist, zeta function, height, entropy, finite field}
}

\usepackage[capitalise]{cleveref}

\newtheorem{thm}{Theorem}[section]
\newtheorem{lem}[thm]{Lemma}
\newtheorem{prop}[thm]{Proposition}
\newtheorem{cor}[thm]{Corollary}

\theoremstyle{definition}
\newtheorem{defn}[thm]{Definition}
\newtheorem{exa}[thm]{Example}

\theoremstyle{remark}
\newtheorem{rem}[thm]{Remark}

\crefname{thm}{Thm.}{Thms.}
\Crefname{thm}{Thm.}{Thms.}
\crefname{prop}{Prop.}{Props.}
\Crefname{prop}{Prop.}{Props.}
\crefname{lem}{Lem.}{Lems.}
\Crefname{lem}{Lem.}{Lems.}
\crefname{cor}{Cor.}{Cors.}
\Crefname{cor}{Cor.}{Cors.}
\crefname{rem}{Rem.}{Rems.}
\Crefname{rem}{Rem.}{Rems.}
\crefname{defn}{Def.}{Defs.}
\Crefname{defn}{Def.}{Defs.}
\crefname{exa}{Ex.}{Exs.}
\Crefname{exa}{Ex.}{Exs.}
\crefname{question}{Question}{Questions}
\Crefname{question}{Question}{Questions}
\crefname{problem}{Problem}{Problems}
\Crefname{problem}{Problem}{Problems}
\crefname{conj}{Conj.}{Conjs.}
\Crefname{conj}{Conj.}{Conjs.}

\newcommand{\cH}{\mathcal{H}}
\newcommand{\cP}{\mathcal{P}}

\newcommand{\G}{\mathbb{G}}

\newcommand{\R}{\mathbb{R}}

\newcommand{\Q}{\mathbb{Q}}
\newcommand{\F}{\mathbb{F}}

\newcommand{\bA}{\mathbb{A}}

\newcommand{\Z}{\mathbb{Z}}

\newcommand{\bP}{\mathbb{P}}

\newcommand{\cL}{\mathcal{L}}

\newcommand{\cC}{\mathcal{C}}
\newcommand{\Oo}{{\mathcal O}}

\newcommand{\cV}{\mathcal{V}}
\newcommand{\cW}{{\mathcal W}}

\newcommand{\e}{\zeta}

\newcommand{\w}{\mathbf{w}}
\newcommand{\x}{\mathbf{x}}

\newcommand{\y}{\mathbf{y}}

\newcommand{\Aut}{\operatorname{Aut}}

\newcommand{\ch}{\operatorname{char}}
\newcommand{\iso}{\cong}  % <-- Restored here!

\newcommand{\stack}{\operatorname{stack}}

\newcommand{\Orb}{\operatorname{Orb}}

\DeclareMathOperator\wgcd{wgcd}

\DeclareMathOperator{\lcm}{lcm}
\DeclareMathOperator\Div{Div}
\DeclareMathOperator\Pic{Pic}
\DeclareMathOperator\Supp{Supp}

\DeclareMathOperator\ord{ord}

\DeclareMathOperator\spec{Spec}

\DeclareMathOperator\Ent{Ent}

\DeclareMathOperator\can{can}
\DeclareMathOperator\Fix{Fix}

\DeclareMathOperator\reg{reg}

\newcommand{\Fq}{\mathbb{F}_{q}}
\newcommand{\Fqr}{\mathbb{F}_{q^{r}}}

\newcommand{\Ql}{\mathbb{Q}_\ell} % For l-adic numbers

\newcommand{\ff}{\mathrm{FF}}
\newcommand{\prob}{\text{Prob}}

\newcommand{\ordq}[1]{\ord_{#1}(q)}

\newcommand\la{\lambda}

\usepackage[scale=.76]{geometry}

\title[Heights, Entropy, and Zeta on Weighted Hypersurfaces over $\F_q$]{Stacky Heights, Entropy, and Zeta Functions of Weighted Hypersurfaces over Finite Fields}

\subjclass[2020]{Primary 11G25, 14G15; Secondary 14A20, 11G50, 14G10, 94A17}

\keywords{Weighted projective stack, weighted hypersurface, inertia stack, twisted sector, twist, zeta function, functional equation, height, Northcott property, entropy, finite field}

\author{S. Salami}
\address{Institute of Mathematics and Statistics, Rio de Janeiro State University, Maracanã, Rio de Janeiro, 20950-000, RJ, Brazil}
\email{Sajad.salami@ime.uerj.br}

\author{T. Shaska}
\address{Department of Mathematics and Statistics,
Oakland University,  Rochester, MI, 48309.} \email{shaska@oakland.edu}

\begin{document}

\begin{abstract}

Let \(X\subset \mathbb P^n_{\mathbf w}\) be a weighted hypersurface over \(\mathbb F_q\), with associated stack \(\mathfrak X\). For \(\mathbf x\in X(\mathbb F_{q^r})\), let \(g_{\mathbf x}(r)=\gcd(k_{S(\mathbf x)},q^r-1)\), where \(k_S=\gcd(w_i:i\in S)\), and define \(\Theta_r(s)=\sum_{\mathbf x}g_{\mathbf x}(r)^{1-s}\) and \(Z_H^{\mathrm{can}}(\mathfrak X,s;t)=\exp(\sum_{r\ge1}\Theta_r(s)t^r/r)\). We prove the finite sector factorization
\[
Z_H^{\mathrm{can}}(\mathfrak X,s;t)=\prod_{e\in E_{\mathbf w}} Z(X^{(e)}_{\mathbb F_{q^{o_e}}},t^{o_e})^{J_{1-s}(e)/o_e},
\]
where \(X^{(e)}\) is the closed isotropy sector, \(o_e=\operatorname{ord}_e(q)\) for \(e>1\) and \(o_1=1\), and \(J_{1-s}\) is the Jordan totient function. Hence the specializations \(s=1,0,-1,-2,\ldots\) are rational in \(t\): \(s=1\) gives the Hasse--Weil zeta function of the coarse space, \(s=0\) the twist zeta function, and \(s=1-k\) the masses of the \(k\)-fold inertia stack. The factorization yields a functional equation when the nonempty sectors are self-dual and equidimensional; if they are also pure and geometrically irreducible, equidimensionality is necessary for every integral \(s\le0\). For weighted diagonal hypersurfaces of degree \(D\) with \(p\nmid D\), a finite Fermat cover gives purity and self-duality. We also prove a Lang--Weil asymptotic for isotropy entropy, with periodic leading term governed by isotropy periods and Frobenius orbits, and identify the function-field height with the stable stack height of Ellenberg--Satriano--Zureick-Brown, while the degree-based height remains a coordinate-complexity statistic.

\end{abstract}

\maketitle

%\tableofcontents

\section{Introduction}

Weighted projective hypersurfaces over finite fields combine ordinary point-counting with nontrivial isotropy coming from the weighted \(\G_m\)-action.  The coarse variety remembers the underlying projective geometry, whereas the quotient stack retains the stabilizers of coordinate strata.  The purpose of this paper is to make that distinction arithmetic: we package the rational isotropy of a weighted hypersurface into a family of zeta functions and identify its contributions sector by sector.

The starting point is \cite{paper-1}.  There the coarse rational-point count, the stack mass, and the number of \(\Fq^\times\)-orbits on nonzero representatives were separated for weighted projective stacks.  The first two are insensitive to a common reduction of the weights, while the orbit count records the \(\Fq\)-isomorphism classes of stack objects.  The ambient twist zeta function was shown to be rational and its functional equation was determined; weighted diagonal hypersurfaces were also treated explicitly in a split regime.  The present paper does not reprove those results.  Its new input is a closed-sector decomposition valid for an arbitrary weighted hypersurface and for the full family of isotropy moments \(\Theta_r(s)\).

Let \(\mathfrak X\subset\cP_{\w}\) be the stack associated with a weighted hypersurface \(X\).  A coarse point \(\x\in X(\Fqr)\) of support \(S(\x)\) has
\[
g_{\x}(r)=\gcd(k_{S(\x)},q^r-1)
\]
isomorphism classes of lifts to \(\mathfrak X(\Fqr)\), and each such lift has the same number of rational automorphisms.  We therefore introduce
\[
\Theta_r(s)=\sum_{\x\in X(\Fqr)}g_{\x}(r)^{1-s},\qquad
Z_H^{\can}(\mathfrak X,s;t)
=\exp\Bigl(\sum_{r\ge1}\Theta_r(s)\frac{t^r}{r}\Bigr).
\]
Theorem~\ref{thm:main} expresses this series as a finite product of ordinary Hasse--Weil zeta functions of closed isotropy sectors.  This is the structural result of the paper.  It implies, without a case-by-case orbit calculation, that \(s=1\) gives the zeta function of the coarse space, \(s=0\) gives the twist zeta function, and \(s=1-k\) records the mass of the \(k\)-fold inertia stack.  The factorization also isolates the geometric obstruction to a single functional equation: sectors of different dimensions have different centers of duality.  We prove sufficiency under self-duality, and obtain necessity as well when the sectors are pure and geometrically irreducible.

The height and entropy constructions provide two complementary interpretations of these coefficients.  The probabilistic height is the information content of a stack object for the normalized groupoid mass, and its mean is the corresponding groupoid entropy.  The entropy of the isotropy stratification is controlled by the non-generic rational isotropy locus; a Lang--Weil argument gives its leading decay and shows that the periodic coefficient depends not only on the periodic functions \(g_S(r)\) but also on Frobenius orbits of the geometric components.  This Frobenius contribution is essential even in elementary diagonal examples.

Two other height constructions are kept conceptually separate.  The function-field height over \(\Fq(t)\) is an arithmetic height in the usual global-field sense and is identified in Section~\ref{sec:6} with the stable height on the weighted projective stack in the sense of Ellenberg--Satriano--Zureick-Brown \cite{ellenberg}.  By contrast, the degree-based height is a coordinate-dependent finite-field complexity statistic with a Northcott property; it is useful for stratification but is not asserted to be an intrinsic Weil height.  Under the common reduction \(\w\mapsto\w/d\), the normalized stacky versions of these heights are invariant in the precise sense proved below, while the distribution of twists changes by the resulting \(\mu_d\)-gerbe.

The paper is organized as follows.  \Cref{sec:2} fixes notation.  \Cref{sec:3,sec:4} introduce the three height functions and their entropy profiles.  \Cref{sec:5} gives the groupoid and inertia interpretations, and \cref{sec:6} compares the function-field height with the stack height of \cite{ellenberg}.  \Cref{sec:7,sec:8} record the power-sum and height-transform identities used later.  The sector factorization, its entropy consequences, and the functional-equation criteria are proved in \cref{sec:9}.

%*********************************************
\section{Preliminaries}\label{sec:2}

Let \(k\) be a field and \(\w = (w_0, \dots, w_n)\) a vector of positive integers. The \textbf{weighted projective space} \(\bP_\w^n\) is the variety over \(k\) whose geometric points are the quotient of \(\bar k^{n+1} \setminus \{0\}\) under the action
\[
(x_0, \dots, x_n) \sim (\la^{w_0} x_0, \dots, \la^{w_n} x_n)
\]
 for \(\la \in \bar k^\times\),
 denoted by points \(\x = [x_0 : \dots : x_n]\). This space is \textbf{well-formed} if \(\gcd(w_0, \dots, \hat{w_i}, \dots, w_n) = 1\) for each \(i\), and \textbf{reduced}  if \(\gcd(w_0, \dots, w_n) = 1\). Weighted varieties are subschemes of \(\bP_\w^n\) defined by weighted homogeneous polynomials.
In characteristic zero, weighted heights generalize the Weil height to weighted projective spaces. In \cite{b-g-sh}, the \textbf{weighted greatest common divisor}  for a tuple \((a_0, \dots, a_n) \in \Z^{n+1}\) and weights \(\w\) is
\begin{equation}
\wgcd_\w    (a_0, \dots, a_n) := \max  \{ k \in \Z_{>0} \mid k^{w_i} \mid a_i \ \forall i \},
\end{equation}
 enabling coordinate normalization. The weighted height \(H_\w(P)\) for \(P \in \bP_\w^n(\bar{\Q})\) is defined as the product of local maxima of normalized absolute values over places, satisfying Northcott finiteness and subadditivity.

In \cite{s-sh}, local and global weighted heights are extended using Cartier and Weil divisors. The global height is
\begin{equation}
H_\w(P) := \prod_v H_{\w,v}(P)^{\deg(v)/[K:\Q]},
\end{equation}
 where \(H_{\w,v}\) are local heights at places \(v\) of a number field \(K\). These heights apply to Diophantine geometry, bounding rational points and analogs of Vojta's conjecture extended to smooth weighted varieties in \cite{2023-1}.

Put \(m = \lcm(w_0,\dots,w_n)\). The Veronese-type map
\(\phi_m: \bP_\w^n \to \bP^n\),
\[
 [x_0 : \dots : x_n] \mapsto [x_0^{m/w_0} : \dots : x_n^{m/w_n}],
\]
 is a finite morphism which relates weighted and classical points, but over a number field it is not surjective on rational points, inducing arithmetic sparsity: in \cite{sh-94} the asymptotics for points of bounded weighted height over number fields carry a sparsity factor coming from nontrivial torsors under groups of roots of unity.

Over finite fields \(\F_q\), the valuation-theoretic height is no longer available. The torsors that occur in the weighted quotient instead appear in the count of rational representatives. We use the following notation and facts from \cite{paper-1}. Let \(T=\{0,\dots,n\}\); for \(\x\) the support is \(S(\x)=\{i : x_i\neq 0\}\), and for \(\emptyset\neq S\subseteq T\) we put \(k_S=\gcd\{w_i : i\in S\}\), \(d=k_T=\gcd(\w)\), and
\[
g_S(r) := \gcd(k_S, q^r-1) = \#\mu_{k_S}(\Fqr) = \# H^1(\Fqr,\mu_{k_S}).
\]
The sequence \(r\mapsto g_S(r)\) is periodic, with period dividing the multiplicative order of \(q\) modulo the prime-to-\(p\) part of \(k_S\). The weighted projective stack is \(\cP_\w=[(\bA^{n+1}\setminus\{0\})/\G_m]\), with coarse space \(\bP_\w^n\). A coarse \(\Fq\)-point always admits an \(\Fq\)-representative \cite[Lem.~2.1]{paper-1}, the coarse count \(|\bP^n_\w(\Fq)| = (q^{n+1}-1)/(q-1)\) is weight-independent \cite[Cor.~2.2]{paper-1} (see also \cite{AubryPerret2025}), and the isomorphism classes of \(\cP_\w(\Fq)\) are the \(\Fq^\times\)-orbits on \(\Fq^{n+1}\setminus\{0\}\), counted by the orbit count \(A_\w(q)\) \cite[Thm.~3.4, Thm.~3.8]{paper-1}. If \(\w'=\w/d\), then \(\bP^n_\w\iso\bP^n_{\w'}\) as coarse varieties, while \(\cP_\w\to\cP_{\w'}\) is a \(\mu_d\)-gerbe; a point of support \(S\) has stabilizer \(\mu_{k_S}\) in \(\cP_\w\) and \(\mu_{k_S/d}\) in \(\cP_{\w'}\) \cite[Prop.~4.1]{paper-1}. Following \cite{paper-1} we call \(\w\mapsto\w/d\) the \textbf{reduction} of the weight vector.

Three notions of point must be kept apart over a field \(k\): the \(k\)-points of the coarse variety \(\bP^n_\w\); the objects of the groupoid \(\cP_\w(k)\); and the \(k^\times\)-orbits on \(k^{n+1}\setminus\{0\}\). The last two agree over every field, and the first is a quotient of them. We record this for an arbitrary field, since we need it for \(k=\Fqr\) and for \(k=\Fq(t)\).

\begin{lem}\label{lem:lifting}
Let \(k\) be any field.
\begin{enumerate}[label=(\roman*)]
\item The isomorphism classes of \(\cP_\w(k)\) are the \(k^\times\)-orbits on \(k^{n+1}\setminus\{0\}\) for the weighted action.
\item Every \(k\)-point of the coarse variety \(\bP^n_\w\) has a representative in \(k^{n+1}\setminus\{0\}\).
\item The orbits lying over a coarse \(k\)-point of support \(S\) form a torsor under \(H^1(k,\mu_{k_S})\simeq k^\times/(k^\times)^{k_S}\): if \(\tilde\x\) is one representative, the others are, up to the action of \(k^\times\), the vectors \((a^{w_i/k_S}\tilde x_i)_i\) with \(a\in k^\times\).
\end{enumerate}
The same holds for a closed substack \(\mathfrak X\subset\cP_\w\) defined by weighted homogeneous equations and its coarse space \(X\).
\end{lem}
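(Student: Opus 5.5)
For (i) I will describe the groupoid \(\cP_\w(k)\) directly from the definition of a quotient stack. A \(k\)-point of \([(\bA^{n+1}\setminus\{0\})/\G_m]\) is a \(\G_m\)-torsor \(P\to\spec k\) together with a \(\G_m\)-equivariant map \(P\to\bA^{n+1}\setminus\{0\}\). By Hilbert~90, \(H^1(k,\G_m)=0\), so every such torsor is trivial. A trivialization identifies the object with a vector \(\x\in k^{n+1}\setminus\{0\}\), namely the image of the unit section. Changing the trivialization by \(\la\in k^\times\) replaces \(\x\) by \(\la\cdot\x\) under the weighted action. Isomorphisms of objects are exactly such changes of trivialization. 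Hence the isomorphism classes are the \(k^\times\)-orbits. This also identifies \(\Aut(\x)\) with \(\mu_{k_{S(\x)}}(k)\), the stabilizer of \(\x\) in \(k^\times\).

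For (ii), let \(\x\) be a coarse \(k\)-point and choose a geometric representative \(\tilde\x\in\bar k^{n+1}\setminus\{0\}\). By Galois invariance of the orbit, for every \(\sigma\) there is \(\la_\sigma\in\bar k^\times\) with \(\sigma\tilde\x=\la_\sigma\cdot\tilde\x\). This \(\la_\sigma\) is well defined modulo \(\mu_{k_S}\), where \(S=S(\x)\).
\begin{enumerate}[label=(\alph*)]
\item I will reduce to the case \(k_S=1\) by passing to the reduced weights \(w_i/k_S\) on the coordinates in \(S\). This is legitimate because \(\la^{w_i}=(\la^{k_S})^{w_i/k_S}\).
\item After this reduction, \(\la_\sigma\) is uniquely determined, and the cocycle condition holds on the nose. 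This gives a class in \(H^1(k,\G_m)\), which vanishes by Hilbert~90. Writing \(\la_\sigma=\sigma(\mu)/\mu\), the vector \(\mu^{-1}\cdot\tilde\x\) is Galois fixed.
\item For inseparable extensions, I will instead argue with fppf descent: the fibre of \(\bA^{n+1}\setminus\{0\}\to\bP^n_\w\) over the point is a torsor under \(\G_m/\mu_{k_S}\simeq\G_m\), and it is therefore trivial.
\end{enumerate}
The main subtlety is exactly this reduction: making the cocycle honest requires quotienting by the stabilizer first. I expect this to be the main obstacle, especially when \(p\mid k_S\) and \(\mu_{k_S}\) is non-reduced, which is why the flat-cohomology formulation is the right one.

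For (iii), fix a \(k\)-representative \(\tilde\x\) from (ii). Any other \(k\)-representative has the form \(\la\cdot\tilde\x\) with \(\la\in\bar k^\times\). The condition that it is \(k\)-rational says \(\la^{w_i}\in k^\times\) for \(i\in S\), and since \(\gcd_{i\in S}(w_i)=k_S\), this is equivalent to \(\la^{k_S}=a\in k^\times\). The resulting vector is \((a^{w_i/k_S}\tilde x_i)_i\). Two values of \(a\) give the same \(k^\times\)-orbit exactly when they differ by an element of \((k^\times)^{k_S}\). So the orbits over the point form a torsor under \(k^\times/(k^\times)^{k_S}\). By the Kummer sequence, together with \(H^1(k,\G_m)=0\), this group is \(H^1(k,\mu_{k_S})\), computed in flat cohomology.

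Finally, everything is compatible with restriction to \(\mathfrak X\). The equations defining \(\mathfrak X\) are weighted homogeneous, so their zero locus is stable under the weighted action, and every orbit lies either entirely inside or entirely outside it. The arguments above therefore apply verbatim to the preimage of \(X\) in \(\bA^{n+1}\setminus\{0\}\).
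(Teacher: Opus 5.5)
Your arguments for (i), for (iii), and for the passage to \(\mathfrak X\) are correct and follow the paper's route. The paper proves (i) by Hilbert~90 and refers (ii) and (iii) to \cite{paper-1}, observing that only \(H^1(k,\G_m)=0\) and the Kummer sequence are used.

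\textbf{The gap is in (ii) for imperfect \(k\).} This is not a side case, since the lemma is applied to \(K=\F_q(t)\). With \(\tilde\x\in\bar k^{n+1}\) and \(\sigma\in\Aut(\bar k/k)\), the cocycle argument in (b) only produces a vector over the perfect closure of \(k\). So that case rests entirely on (c), and the claim in (c) is false as stated. The scheme-theoretic fibre of \(\bA^{n+1}\setminus\{0\}\to\bP^n_\w\) over a point with \(k_S>1\) is in general neither a torsor nor even reduced, in any characteristic. For example, take \(\w=(1,1,2)\) and the point \([0:0:1]\). The chart \(D_+(x_2)\) has coordinate ring generated by \(x_0^2/x_2,\ x_0x_1/x_2,\ x_1^2/x_2\), so the fibre is \(\spec k[x_0,x_1,x_2^{\pm1}]/(x_0^2,x_0x_1,x_1^2)\). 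This is non-reduced, whereas a torsor under the smooth group \(\G_m/\mu_2\simeq\G_m\) would be smooth.

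\textbf{The repair is to work on the support stratum instead of the fibre.} Put \(\Lambda=\{a\in\Z^S:\sum_i a_iw_i=0\}\).
For \(a\in\Lambda\), the Laurent monomial \(x^a\) is a regular function on the open set \(D_+(\prod_{i\in S}x_i)\ni\x\). Hence \(\chi(a):=\tilde\x^{a}\) lies in \(k^\times\), and \(\chi\) is a character of \(\Lambda\).
Since \(\Z^S/\Lambda\simeq k_S\Z\) is torsion-free, \(\chi\) extends to a homomorphism \(\Z^S\to k^\times\). That is, there is a vector \(\y\in(k^\times)^S\) with \(\y^a=\chi(a)\) on \(\Lambda\).
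The ratio \(\y/\tilde\x\) is killed by \(\Lambda\), hence equals \((\lambda^{w_i})_{i\in S}\) for some \(\lambda\in\bar k^\times\). So \(\y\), extended by zeros, represents \(\x\).

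Equivalently, write \(\rho(\lambda)=(\lambda^{w_i})_{i\in S}\). Then \(\{x_i\neq0\iff i\in S\}\simeq\G_m^S\) is a torsor over the stratum \(\G_m^S/\rho(\G_m)\) under \(\rho(\G_m)\simeq\G_m/\mu_{k_S}\simeq\G_m\). This saturation of \(\Lambda\) is exactly what your reduction to \(k_S=1\) was aiming at. It also shows that \(\mu_{k_S}\), reduced or not, plays no role in (ii). Flat cohomology enters only through the identification \(k^\times/(k^\times)^{k_S}\simeq H^1(k,\mu_{k_S})\) in (iii).
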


\begin{proof}
An object of \(\cP_\w(k)\) is a \(\G_m\)-torsor over \(\spec k\) with an equivariant map to \(\bA^{n+1}\setminus\{0\}\). Hilbert's Theorem 90 gives \(H^1(k,\G_m)=0\), proving (i). Parts (ii) and (iii) are \cite[Lem.~2.1, Prop.~3.7]{paper-1}. Their proofs use only this vanishing and the fppf Kummer sequence for \(\mu_{k_S}\), so they apply over any field. For a substack, the cone is stable under the weighted action; hence every representative of a point of \(X\) lies on it.
\end{proof}

Over a finite field the torsor in (iii) is finite; over \(\Fq(t)\) it is infinite when \(k_S>1\), and trivial when \(k_S=1\). Heights attached to the coarse point and heights attached to the stack object must therefore be distinguished over the function field, which we do in \cref{sec:6}.

Let now \(X=V(f)\subset\bP^n_\w\) be a weighted hypersurface over \(\Fq\), with affine cone \(\hat X=\{f=0\}\subset\bA^{n+1}\), and let
\[
\mathfrak{X} = [(\hat X\setminus\{0\})/\G_m] \subset \cP_\w
\]
be the associated closed substack, with coarse space \(X\). We write \(X(\Fqr)\) for the \(\Fqr\)-points of the coarse variety, equivalently the geometric points of \(X\) fixed by the \(q^r\)-Frobenius, and \(\pi_0(\mathfrak{X}(\Fqr))\) for the set of isomorphism classes of the groupoid \(\mathfrak{X}(\Fqr)\), which we call \textbf{twists}. The following lemma collects what we need; it is \cite[Prop.~3.7, Prop.~3.9, Prop.~8.1]{paper-1} read on the cone of \(X\).

\begin{lem}\label{lem:twists}
Let \(\x\in X(\Fqr)\) have support \(S\). Then:
\begin{enumerate}[label=(\roman*)]
\item \(\x\) has a representative in \(\hat X(\Fqr)\setminus\{0\}\), and every \(\Fqr\)-representative of \(\x\) lies on \(\hat X\);
\item the \(\Fqr^\times\)-orbits of \(\Fqr\)-representatives of \(\x\), that is, the twists lying over \(\x\), form a torsor under \(H^1(\Fqr,\mu_{k_S})\); there are \(g_S(r)\) of them, each orbit has \((q^r-1)/g_S(r)\) elements, and each twist \(\xi\) has \(\Aut(\xi)(\Fqr)=\mu_{k_S}(\Fqr)\), of order \(g_S(r)\);
\item consequently
\[
\#\bigl(\hat X(\Fqr)\setminus\{0\}\bigr) = (q^r-1)\,|X(\Fqr)|,
\qquad
\#\pi_0(\mathfrak{X}(\Fqr)) = \sum_{\x\in X(\Fqr)} g_{S(\x)}(r).
\]
\end{enumerate}
\end{lem}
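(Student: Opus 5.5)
The statement is Lemma~\ref{lem:twists}. The plan is to read it off from Lemma~\ref{lem:lifting}, applied with \(k=\Fqr\) to the closed substack \(\mathfrak X\subset\cP_\w\), and then count.

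For (i), Lemma~\ref{lem:lifting}(ii) gives a representative \(\tilde\x\in\Fqr^{n+1}\setminus\{0\}\) of the coarse point \(\x\). Every \(\Fqr\)-representative of \(\x\) has the form \(\la\cdot\tilde\x\) for some \(\la\in\bar{\F}_q^\times\), by the definition of geometric points of \(\bP^n_\w\). Since \(f\) is weighted homogeneous, \(f(\la\cdot\tilde\x)=\la^{\deg f}f(\tilde\x)\). Because \(\x\in X\), some geometric representative lies on \(\hat X\), so \(f(\tilde\x)=0\). Hence every representative, rational or not, lies on \(\hat X\).

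For (ii), Lemma~\ref{lem:lifting}(i) identifies twists with \(\Fqr^\times\)-orbits of representatives. Lemma~\ref{lem:lifting}(iii) shows that the twists over \(\x\) form a torsor under \(H^1(\Fqr,\mu_{k_S})\simeq\Fqr^\times/(\Fqr^\times)^{k_S}\). Since \(\Fqr^\times\) is cyclic of order \(q^r-1\), this quotient has order \(\gcd(k_S,q^r-1)=g_S(r)\).

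Next we compute the stabilizer of a representative \(\tilde\x\) under the weighted action. An element \(\la\in\Fqr^\times\) fixes \(\tilde\x\) exactly when \(\la^{w_i}=1\) for all \(i\in S\), that is, when \(\la^{k_S}=1\). So the stabilizer is \(\mu_{k_S}(\Fqr)\), which has order \(g_S(r)\). By orbit–stabilizer, each orbit has \((q^r-1)/g_S(r)\) elements.

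The automorphism group of the corresponding object of the quotient stack is this same stabilizer, so \(\Aut(\xi)(\Fqr)=\mu_{k_S}(\Fqr)\).

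For (iii), the set of \(\Fqr\)-representatives of \(\x\) is the union of its \(g_S(r)\) orbits. It therefore has
\[
g_S(r)\cdot\frac{q^r-1}{g_S(r)}=q^r-1
\]
elements. By (i), \(\hat X(\Fqr)\setminus\{0\}\) is the disjoint union of these fibers over \(\x\in X(\Fqr)\). This gives \((q^r-1)|X(\Fqr)|\). Summing the number of twists over \(\x\) gives the formula for \(\#\pi_0(\mathfrak X(\Fqr))\).

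The only delicate point is the torsor claim in (iii) of Lemma~\ref{lem:lifting}. It relies on the Kummer sequence, which is fppf when \(p\mid k_S\). Since that lemma is assumed, the sole check needed is that \(\#\mu_{k_S}(\Fqr)=\gcd(k_S,q^r-1)\) also holds when \(p\mid k_S\). This is true because \(\mu_{p^a}(\Fqr)\) is trivial.
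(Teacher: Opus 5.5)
Your proof is correct and follows the paper's route: apply \cref{lem:lifting} with \(k=\Fqr\), use the cyclicity of \(\Fqr^\times\) to see that \(\#H^1(\Fqr,\mu_{k_S})\) and the stabilizer \(\mu_{k_S}(\Fqr)\) both have order \(g_S(r)\), and then sum over \(X(\Fqr)\). Your homogeneity argument for (i) and your orbit--stabilizer count of \(q^r-1\) representatives per coarse point only make explicit what the paper leaves implicit.
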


\begin{proof}
Parts (i) and (ii) are \cref{lem:lifting} for \(k=\Fqr\), where \(\#H^1(\Fqr,\mu_{k_S})=g_S(r)\) and the stabilizer of a vector of support \(S\) in \(\Fqr^\times\) is \(\mu_{k_S}(\Fqr)\). Part (iii) follows by summing over \(X(\Fqr)\).
\end{proof}

Two integers are attached to a point \(\x\) of support \(S\) and must not be confused: the order \(k_S\) of the geometric stabilizer group scheme \(\mu_{k_S}\), which we write \(|\Aut(\x)|\) and which does not depend on \(r\); and the number \(g_\x(r)=g_{S}(r)=\gcd(k_S,q^r-1)\) of its \(\Fqr\)-rational automorphisms, which is also the number of its twists. Always \(g_\x(r)\mid k_S\). The second count in (iii) is the orbit count \(A_X(q^r)\) of \cite[Sec.~8]{paper-1}. Throughout, \(\log\) denotes the natural logarithm.

%******************************************************************
\section{Heights on Weighted Varieties over Finite Fields}\label{sec:3}

Weighted heights over global fields use valuations and therefore have no direct analogue over the finite field \(\F_q\) itself.  We shall use three different functions associated with a weighted hypersurface \(X\subseteq\bP^n_{\w}\) over \(\F_q\):
\begin{enumerate}[label=\roman*)]
\item a degree-based coordinate-complexity function,
\item the valuation-theoretic height after base change to \(\F_q(t)\), and
\item a probabilistic height obtained from weighted orbit sizes.
\end{enumerate}

Each family induces a stratification of a point set attached to \(X\) and will be used later to define entropy and height zeta functions. The first lives on the geometric points \(X(\overline{\F}_q)\), the second on the points of \(X\) over the function field, and the third on the twists over \(X(\Fqr)\).

%*************************************************************************************************

\subsection{A degree-based complexity function}

The first construction uses extension degrees of coordinates.  It is useful for finite-field stratifications, but it is not an intrinsic Weil height; see \cref{rem:deg-status}.

For a geometric point \(\x = [x_0 : \cdots : x_n] \in X(\overline{\F}_q)\), let
\[
r_\x := \min\{ r \geq 1 \mid \x\text{ admits a representative with all coordinates in }\F_{q^r}\}
\]
be the degree of the minimal field of definition of \(\x\). For \(a\in\overline{\F}_q\), let \(\deg(a)=[\F_q(a):\F_q]\) denote the degree of its minimal polynomial over \(\F_q\) (so that \(\deg(a)=1\) if \(a\in\F_q\), and in particular \(\deg(0)=1\)).

\begin{defn}
The \textbf{degree-based weighted height} is
\[
H_{\w,q}^{\deg}(\x) := r_\x \cdot \min_{\lambda \in \overline{\F}_q^\times} \max_{0\leq i\leq n} \frac{\deg(\lambda^{w_i} x_i)}{w_i}.
\]
Since the denominators \(w_i\) are fixed, the inner maximum takes values in a discrete subset of \(\frac1m\Z_{>0}\), where \(m=\lcm(\w)\); hence the minimum is attained.
\end{defn}

This definition is independent of the choice of representative: it minimises over the full weighted orbit in the algebraic closure. If \((x'_0,\dots,x'_n) = (\mu^{w_0}x_0,\dots,\mu^{w_n}x_n)\) for \(\mu\in\overline{\F}_q^\times\), the set of scalars \(\lambda\) simply shifts, so the minimum is unchanged. The weights enter non-trivially through the denominators \(w_i\).

\begin{rem}
We set \(\deg(0)=1\).  This convention keeps every term in the maximum positive and will be used throughout this subsection.
\end{rem}

\begin{prop}\label{Pp1}
The function \(H_{\w,q}^{\deg}\) satisfies:
\begin{enumerate}[label=(\roman*)]
\item (Finiteness) For any \(B>0\), the set \(\{\x\in X(\overline{\F}_q)\mid H_{\w,q}^{\deg}(\x)\leq B\}\) is finite.
\item (Weighted scaling) If \(\x\sim\lambda\cdot\x\), then \(H_{\w,q}^{\deg}(\x)=H_{\w,q}^{\deg}(\lambda\cdot\x)\).
\item (Functoriality of field of definition) If \(\phi:X\to Y\) is a morphism of weighted varieties defined over \(\F_q\), then \(r_{\phi(\x)}\leq r_\x\).
\end{enumerate}
\end{prop}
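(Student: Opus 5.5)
I would prove the three parts separately, starting with the two formal ones and leaving finiteness for last.

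Part (ii) I would take directly from the remark after the definition. If \(\x'=(\mu^{w_i}x_i)_i\), then substituting \(\lambda\mapsto\lambda\mu^{-1}\) is a bijection of \(\overline{\F}_q^\times\). It identifies the two sets of values \(\max_i\deg(\lambda^{w_i}x'_i)/w_i\) and \(\max_i\deg(\lambda^{w_i}x_i)/w_i\), so the minima agree. The integer \(r_\x\) is defined in terms of the coarse point, hence of the whole orbit, so it is the same for \(\x\) and \(\x'\). The product is therefore unchanged.

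For part (iii), let \(\phi\) be given on the cone by weighted homogeneous polynomials \(\phi_j\) with coefficients in \(\F_q\). Choose a representative \(\tilde\x\) of \(\x\) with all coordinates in \(\F_{q^{r_\x}}\). Then \((\phi_j(\tilde\x))_j\) has coordinates in \(\F_{q^{r_\x}}\), and it is nonzero because \(\phi\) is a morphism. It is therefore a representative of \(\phi(\x)\) defined over \(\F_{q^{r_\x}}\), and by minimality \(r_{\phi(\x)}\le r_\x\). If \(\phi\) is only given locally by such formulas, I would run the same argument on the chart containing \(\x\). The conclusion is unaffected because it concerns only the field of definition.

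Part (i) is where the work lies. The plan is to show that a height bound forces a bounded field of definition, and then use that each \(X(\F_{q^r})\) is finite. The steps are as follows.

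\begin{enumerate}[label=(\alph*)]
\item Each term \(\deg(\lambda^{w_i}x_i)/w_i\) is at least \(1/w_i\ge 1/m\). Hence the inner minimum is at least \(1/m\), and \(H^{\deg}_{\w,q}(\x)\ge r_\x/m\).
\item So \(H^{\deg}_{\w,q}(\x)\le B\) implies \(r_\x\le mB\). Therefore \(\x\) lies in \(\bigcup_{r\le mB}X(\F_{q^r})\), the set of coarse points having a representative over some \(\F_{q^r}\) with \(r\le mB\).
\item By \cref{lem:twists}(iii), \(|X(\F_{q^r})|=\#(\hat X(\F_{q^r})\setminus\{0\})/(q^r-1)\) is finite. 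Alternatively, the set of such orbits is the image of the finite set \(\F_{q^r}^{n+1}\setminus\{0\}\). A finite union of finite sets is finite.
\end{enumerate}

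The only subtle point I anticipate is step (a): I must check that the convention \(\deg(0)=1\) really makes every term positive. This is exactly the purpose of the remark, since without it zero coordinates would contribute \(0\) and the lower bound would fail.

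A secondary subtlety is that \(r_\x\) is finite at all, i.e. every geometric point has a representative over some finite field. This holds because the coordinates of any representative lie in some finite extension of \(\F_q\). I expect no real obstacle beyond these two checks.
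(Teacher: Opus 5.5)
Your proposal is correct and follows the paper's proof in all three parts: (ii) by reparametrizing \(\lambda\) over \(\overline{\F}_q^\times\), (iii) by applying \(\phi\) to a representative defined over \(\F_{q^{r_\x}}\), and (i) by bounding the inner min-max term from below, which bounds \(r_\x\), and then using that each \(X(\F_{q^r})\) is finite. The only difference is cosmetic: the paper uses the sharper lower bound \(\max_i 1/w_i = 1/\min_i w_i\), giving \(r_\x\le B\min_i w_i\), whereas your bound \(1/m\) gives \(r_\x\le mB\); either is enough for finiteness.
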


\begin{rem}
Part (iii) establishes functoriality only for the extension-degree factor \(r_\x\). It does not automatically imply functoriality of the full height \(H_{\w,q}^{\deg}\), because the inner min-max term may behave differently under a general morphism \(\phi\).
\end{rem}

\begin{proof}
(i) By definition,
\[
H_{\w,q}^{\deg}(\x)=r_\x\cdot\min_{\lambda\in\overline{\F}_q^\times}\max_i\frac{\deg(\lambda^{w_i}x_i)}{w_i}.
\]
The inner min-max term is always at least \(1/\min_i w_i>0\) (since \(\deg(\cdot)\geq1\)). Therefore \(H_{\w,q}^{\deg}(\x)\leq B\) forces
\[
r_\x\leq B\cdot\min_i w_i.
\]
There are only finitely many positive integers \(r\leq B\cdot\min_i w_i\). For each such \(r\), every geometric point \(\x\) with \(r_\x=r\) admits a representative defined over \(\F_{q^r}\), hence determines an \(\F_{q^r}\)-rational point of \(X\).
Since each \(X(\F_{q^r})\) is finite, only finitely many geometric points can occur. Therefore the total set of geometric points of bounded height is finite.

(ii) Let \(\x'=\mu\cdot\x\) for some \(\mu\in\overline{\F}_q^\times\). Then any representative of \(\x'\) is of the form \((\mu^{w_i}x_i)\). The weighted orbit of \(\x'\) consists of points
\[
\lambda^{w_i}(\mu^{w_i}x_i)=(\lambda\mu)^{w_i}x_i,
\]
so as \(\lambda\) runs through \(\overline{\F}_q^\times\), the set \(\{\lambda\mu\mid\lambda\in\overline{\F}_q^\times\}\) is again the full group \(\overline{\F}_q^\times\). Therefore the minimum taken over the orbit of \(\x'\) is identical to the minimum taken over the orbit of \(\x\). Hence \(H_{\w,q}^{\deg}(\x')=H_{\w,q}^{\deg}(\x)\).

(iii) Suppose \(\phi\) is given by polynomials with coefficients in \(\F_q\). Let \(\x\) be a geometric point with minimal field of definition \(\F_{q^{r_\x}}\), i.e., \(\x\) admits a representative with coordinates in \(\F_{q^{r_\x}}\). Since the defining equations of \(\phi\) have coefficients in \(\F_q\), applying \(\phi\) to this representative yields a representative of \(\phi(\x)\) whose coordinates also lie in \(\F_{q^{r_\x}}\). Therefore the minimal extension degree of \(\phi(\x)\) satisfies \(r_{\phi(\x)}\leq r_\x\).
\end{proof}

Since the degree-based height is invariant under the weighted action (Prop.~\ref{Pp1}(ii)), the value \(H_{\w,q}^{\deg}(\x)\) is well-defined on geometric points of \(X\).

\begin{rem}
The stronger coordinate-degree subadditivity
\[
\deg(f_j(x))\leq d_j \max_i\deg(x_i)
\]
does not hold in general over finite fields, so we state only the safe functoriality of the extension degree \(r_\x\).
\end{rem}

\begin{rem}[Status of the degree-based height]\label{rem:deg-status}
We call \(H^{\deg}_{\w,q}\) a height because it satisfies the Northcott property and is formally modelled on the weighted height of \cite{b-g-sh}, with extension degrees in place of valuations. It should be understood, however, as a complexity statistic attached to the chosen weighted coordinates, not as an intrinsic Weil height.  We prove invariance under the weighted \(\G_m\)-action and the common reduction of the weights, but we do not claim invariance under arbitrary automorphisms of the weighted projective space, nor do we attach a line-bundle interpretation to it.  The intrinsic quantity entering its definition is \(r_\x\), the degree of the residue field of the coarse point. None of the main results of \cref{sec:9} depends on \(H^{\deg}_{\w,q}\).
\end{rem}

As a direct consequence of the definition, the degree-based height behaves under the reduction of the weight vector as follows.

\begin{cor}
\label{COR1}
Let \(d=\gcd(\w)\) and let \(\w'=\w/d\). Let \(\x\in X\subset\mathbb{P}^n_{\w}\) and \(\x'\in X'\subset\mathbb{P}^n_{\w'}\) be corresponding points on the isomorphic hypersurfaces (with the reduced weight vector \(\w'\)). Then
\[
H_{\w',q}^{\deg}(\x')=d\cdot H_{\w,q}^{\deg}(\x).
\]
\end{cor}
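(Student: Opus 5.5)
Since \(\w=d\,\w'\), the point correspondence is the identity on coordinates: the isomorphism \(\bP^n_\w\iso\bP^n_{\w'}\) of coarse varieties sends \([x_0:\dots:x_n]\) to \([x_0:\dots:x_n]\). Two vectors are equivalent under the \(\w\)-action iff they are equivalent under the \(\w'\)-action, because \(\lambda^{w_i}=(\lambda^d)^{w'_i}\) and \(\lambda\mapsto\lambda^d\) is surjective on \(\overline{\F}_q^\times\). So I will take \(\x'\) to be the same geometric point, with the same coordinate vector, and compare the two formulas factor by factor.

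The first step is to check that the factor \(r_\x\) agrees on both sides. It is defined as the minimal \(r\) such that some representative of \(\x\) has all coordinates in \(\F_{q^r}\). The set of representatives (the orbit in \(\overline{\F}_q^{n+1}\setminus\{0\}\)) is literally the same set for \(\w\) and \(\w'\) by the previous paragraph. Hence \(r_{\x'}=r_\x\).

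The second step is the min-max term. As \(\lambda\) ranges over \(\overline{\F}_q^\times\), the vectors \((\lambda^{w_i}x_i)_i\) and \((\mu^{w'_i}x_i)_i\) trace out the same set of vectors, using \(\mu=\lambda^d\) and surjectivity of the \(d\)-th power map. So both minima are taken over the same set of coordinate vectors \(\y\), and only the denominators differ. For each such \(\y\),
\[
\max_i \frac{\deg(y_i)}{w'_i}=\max_i\frac{d\,\deg(y_i)}{w_i}=d\cdot\max_i\frac{\deg(y_i)}{w_i}.
\]
Taking the minimum over the common set of \(\y\) preserves the factor \(d\), since \(d>0\). Multiplying by \(r_\x=r_{\x'}\) then gives \(H^{\deg}_{\w',q}(\x')=d\cdot H^{\deg}_{\w,q}(\x)\).

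The only delicate point is the surjectivity of \(\lambda\mapsto\lambda^d\) on \(\overline{\F}_q^\times\) when \(p\mid d\). In that case the map is still surjective, because Frobenius is bijective on \(\overline{\F}_q\); it is just not separable. Surjectivity on geometric points is all the argument needs. The restriction to the hypersurfaces \(X\) and \(X'\) is harmless, since neither factor involves the equation \(f\).
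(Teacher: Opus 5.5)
Your proof is correct and follows the paper's argument. The paper also notes that \(\x\) and \(\x'\) share coordinates, so \(r_{\x'}=r_\x\), and then substitutes \(\mu=\lambda^d\) in the min-max term, using surjectivity of \(\lambda\mapsto\lambda^d\) on \(\overline{\F}_q^\times\) to pull out the factor \(d\). Your aside about \(p\mid d\) is harmless, but it is not needed: algebraic closedness of \(\overline{\F}_q\) already gives this surjectivity for every \(d\).
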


\begin{proof}
Since \(\x\) and \(\x'\) are represented by the same homogeneous coordinates, their minimal fields of definition coincide, so \(r_{\x'}=r_\x\). Therefore,
\[
H_{\w',q}^{\deg}(\x')=r_\x\cdot\min_{\mu\in\overline{\F}_q^\times}\max_i\frac{\deg(\mu^{w_i/d}x_i)}{w_i/d}.
\]
Perform the change of variables \(\mu=\lambda^d\) with \(\lambda\in\overline{\F}_q^\times\). Then
\[
\mu^{w_i/d}=(\lambda^d)^{w_i/d}=\lambda^{w_i},
\]
so
\[
\deg(\mu^{w_i/d}x_i)=\deg(\lambda^{w_i}x_i).
\]
Hence
\[
\max_i\frac{\deg(\mu^{w_i/d}x_i)}{w_i/d}=d\cdot\max_i\frac{\deg(\lambda^{w_i}x_i)}{w_i}.
\]
It remains to check that the substitution \(\mu=\lambda^d\) runs over the entire group \(\overline{\F}_q^\times\) as \(\lambda\) does. Since \(\overline{\F}_q\) is algebraically closed, the map
\[
\overline{\F}_q^\times\longrightarrow\overline{\F}_q^\times,\qquad\lambda\mapsto\lambda^d
\]
is surjective. Therefore the set of values attained by the min-max expression for the primed weights is exactly \(d\) times the corresponding expression for the original weights.

Multiplying by the common factor \(r_\x\) yields
\[
H_{\w',q}^{\deg}(\x')=d\cdot H_{\w,q}^{\deg}(\x).
\]
\end{proof}

The next result provides simple but useful bounds for the degree-based weighted height.

\begin{prop}\label{prop:deg-bounds}
For any geometric point \(\x \in X(\overline{\F}_q)\),
\[
\frac{r_\x}{\min_i w_i} \;\leq\; H_{\w,q}^{\deg}(\x) \;\leq\; \frac{r_\x^{2}}{\min_i w_i}.
\]
In particular \(H_{\w,q}^{\deg}(\x)=1/\min_i w_i\) for every \(\x\in X(\Fq)\).
\end{prop}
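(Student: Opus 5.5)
The plan is to bound the inner min--max factor
\[
M(\x):=\min_{\lambda\in\overline{\F}_q^\times}\max_{0\le i\le n}\frac{\deg(\lambda^{w_i}x_i)}{w_i}
\]
above and below, and then multiply through by \(r_\x\). Since \(H_{\w,q}^{\deg}(\x)=r_\x\,M(\x)\), it suffices to prove
\[
\frac1{\min_i w_i}\le M(\x)\le \frac{r_\x}{\min_i w_i}.
\]

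For the lower bound, I will use the convention \(\deg(0)=1\), which gives \(\deg(\lambda^{w_i}x_i)\ge1\) for every \(i\) and every \(\lambda\). Taking \(i_0\) with \(w_{i_0}=\min_i w_i\), I get
\[
\max_i\frac{\deg(\lambda^{w_i}x_i)}{w_i}\ge \frac{1}{w_{i_0}}
\]
for each \(\lambda\). Minimizing over \(\lambda\) then gives \(M(\x)\ge 1/\min_i w_i\).

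For the upper bound, I will use the definition of \(r_\x\). Let \(r=r_\x\); it is a finite integer because \(\x\) is an \(\F_{q^r}\)-point of the coarse space for some \(r\), so \cref{lem:lifting}(ii) supplies a representative over that field. By definition there is a representative \((y_i)\) of \(\x\) with all \(y_i\in\F_{q^{r}}\). Any two representatives of the same geometric point differ by the weighted action, so \((y_i)=(\lambda_0^{w_i}x_i)\) for some \(\lambda_0\in\overline{\F}_q^\times\). I will substitute \(\lambda=\lambda_0\) into the minimum. Each \(y_i\) lies in \(\F_{q^r}\), so \(\deg(y_i)\le r\), and hence \(\deg(y_i)/w_i\le r/w_i\le r/\min_i w_i\). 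This gives \(M(\x)\le r_\x/\min_i w_i\). The only point needing care is this step: the minimum ranges over the whole orbit, including the rational representative, which is exactly the representative-independence already observed after the definition and in \cref{Pp1}(ii).

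Multiplying both inequalities by \(r_\x\) yields the stated bounds. For the final claim, let \(\x\in X(\Fq)\). Then \cref{lem:lifting}(ii) with \(k=\Fq\) gives an \(\Fq\)-representative, so \(r_\x=1\). The two bounds then coincide, and \(H_{\w,q}^{\deg}(\x)=1/\min_i w_i\). I do not expect a real obstacle; the argument is a direct unwinding of the definitions together with \cref{lem:lifting}.
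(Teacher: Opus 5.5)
Your proposal is correct and follows the paper's proof: the lower bound comes from the convention $\deg(0)=1$, the upper bound from evaluating the min--max at the $\F_{q^{r_\x}}$-rational representative, and the equality case from $r_\x=1$. Your explicit use of \cref{lem:lifting}(ii) to justify $r_\x=1$ for points of $X(\Fq)$ spells out a step the paper leaves implicit.
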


\begin{proof}
For any fixed \(\lambda\in\overline{\F}_q^\times\), each coordinate satisfies \(\deg(\lambda^{w_i}x_i)\geq 1\) (adopting the convention \(\deg(0)=1\)). Hence
\[
\max_i \frac{\deg(\lambda^{w_i}x_i)}{w_i} \geq \max_i\frac{1}{w_i} = \frac{1}{\min_i w_i}.
\]
The minimum over \(\lambda\) is still bounded below by the same quantity. Multiplying by \(r_\x\) yields the lower bound. For the upper bound take \(\lambda=1\) and a representative with coordinates in \(\F_{q^{r_\x}}\); then \(\deg(x_i)\leq r_\x\) for all \(i\), so the inner term is at most \(r_\x/\min_i w_i\). If \(r_\x=1\) the two bounds agree.
\end{proof}

%*******************************************************************************
\subsection{Heights over Function Fields}

We next pass to the global function field \(K=\F_q(t)\), where the usual valuation-theoretic definition of height is available. For a place \(v\) of \(K\) we normalize \(|a|_v = q^{-\deg(v)\ord_v(a)}\), so that the product formula \(\sum_{v}\log|a|_v=0\) holds for \(a\in K^\times\), and we put \(\log|0|_v=-\infty\).

\begin{defn}
Let \(X_K = X\times_{\Fq} K\) be the base change of \(X \subseteq \mathbb{P}_{\w}^n\) to \(K = \F_q(t)\).
Let \(\x \in X_K(K)\) be a \(K\)-point of the coarse variety. By \cref{lem:lifting} applied to \(k=K\), it has a weighted-homogeneous representative \(\tilde{\x} = (\tilde{x}_0 , \dots , \tilde{x}_n)\) in \(K^{n+1}\setminus\{0\}\). The \textbf{function field weighted height} of \(\x\) is
\begin{equation}\label{height-FF}
H_{\w,q}^{\ff}(\x) = \sum_{v \in M_K} \max_{0\le i\le n} \, \frac{\log |\tilde{x}_i|_v}{w_i},
\end{equation}
where \(v\) runs over all places of \(K\). For a finite extension \(L/K\) and \(\x\in X_K(L)\) the height is defined by the same formula, with \(v\) running over the places of \(L\) and \(|a|_v=|\kappa(v)|^{-\ord_v(a)}\) for the residue field \(\kappa(v)\) at \(v\); this is the absolute normalization, in which \(H^{\ff}\) scales by \([L:K]\) under base change and not by \(1\).
\end{defn}

This is the logarithmic weighted height of \cite{b-g-sh, s-sh} over the global field \(K\). It is a function on the coarse points: by the next lemma it takes the same value on all the twists of \cref{lem:lifting}(iii) lying over \(\x\). In general this does not follow from \(H^{\ff}\) alone, since a coarse point with nontrivial stabilizer has infinitely many twists; the next lemma proves the independence directly, from the product formula.

\begin{lem}
For any point \(\x \in X_K(K)\), the function field weighted height \(H_{\w,q}^{\ff}(\x)\) is well-defined and independent of the choice of weighted-homogeneous representative \(\tilde{\x} \in K^{n+1}\setminus\{0\}\). Moreover \(m\, H_{\w,q}^{\ff}(\x)\) is the logarithmic Weil height of \(\phi_m(\x)\in\bP^n(K)\), where \(m=\lcm(\w)\); in particular \(H_{\w,q}^{\ff}\) takes values in \(\frac{\log q}{m}\,\Z_{\geq 0}\).
\end{lem}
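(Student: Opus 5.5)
The plan has three steps: reduce to a comparison of two representatives of the same coarse point, show that this comparison changes the height by a sum over places that vanishes by the product formula, and then compose with \(\phi_m\) to get the Weil height statement.

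\textbf{Reduction to a single scaling.} By \cref{lem:lifting}(ii) a representative \(\tilde\x\in K^{n+1}\setminus\{0\}\) exists. Let \(\tilde\x'\) be a second \(K\)-representative of the same coarse point. Then \(\tilde\x'=\lambda\cdot\tilde\x\) for some \(\lambda\in\overline K^\times\), i.e.\ \(\tilde x'_i=\lambda^{w_i}\tilde x_i\). I do not try to force \(\lambda\in K\); over \(K\) the twists of \cref{lem:lifting}(iii) give infinitely many non-\(K^\times\)-equivalent representatives, so this generality is needed.

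\textbf{Invariance.} Put \(S=S(\x)\). Only the coordinates with \(i\in S\) matter, since the others contribute \(-\infty\) to the maximum, and \(S\neq\emptyset\). For \(i\in S\) we have \(\lambda^{w_i}=\tilde x'_i/\tilde x_i\in K^\times\). Since \(k_S\) is an integer combination of the \(w_i\) with \(i\in S\), it follows that \(c:=\lambda^{k_S}\in K^\times\), and then \(\lambda^{w_i}=c^{w_i/k_S}\).

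Pass to a finite extension \(L=K(\lambda)\) and use the absolute normalization. At each place \(u\) of \(L\), set \(\ell_u=\log|\lambda|_u\) (real-valued); then \(\log|\lambda^{w_i}|_u=w_i\ell_u\). Hence
\[
\max_{i\in S}\frac{\log|\tilde x'_i|_u}{w_i}=\ell_u+\max_{i\in S}\frac{\log|\tilde x_i|_u}{w_i}.
\]
The key point is that the shift \(\ell_u\) is the same for every \(i\), because dividing by \(w_i\) exactly cancels the exponent. Summing over \(u\), the product formula on \(L\) gives \(\sum_u\ell_u=0\), so both heights agree when computed over \(L\). The sums over \(L\) equal \([L:K]\) times the sums over \(K\), because the places of \(L\) over \(v\) have local degrees summing to \([L:K]\). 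Dividing by \([L:K]\) yields independence over \(K\).

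To avoid leaving \(K\), one can instead write \(\ell=\frac{1}{k_S}\log|c|_v\) at each place \(v\) of \(K\) and apply the product formula to \(c\in K^\times\). I expect this normalization bookkeeping to be the only delicate step.

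\textbf{Comparison with the Weil height.} Next, \(\phi_m(\tilde\x)=(\tilde x_i^{m/w_i})_i\in K^{n+1}\) is a representative of \(\phi_m(\x)\in\bP^n(K)\). Since \(\log|\tilde x_i^{m/w_i}|_v=m\log|\tilde x_i|_v/w_i\), we get
\[
m\,H^{\ff}_{\w,q}(\x)=\sum_v\max_i\log|\tilde x_i^{m/w_i}|_v,
\]
which is the standard logarithmic Weil height on \(\bP^n(K)\). It is nonnegative: scaling by the product formula, choose a representative of \(\phi_m(\x)\) in \(\F_q[t]\) with coprime coordinates. Then every finite place contributes \(0\), and the place at infinity contributes \(\log q\) times the maximal degree. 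So the Weil height lies in \(\log q\,\Z_{\ge0}\), and therefore \(H^{\ff}_{\w,q}(\x)\in\frac{\log q}{m}\Z_{\ge0}\). Well-definedness of the Weil height itself also follows from the product formula, consistent with the previous step.
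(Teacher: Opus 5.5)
Your proof is correct and is essentially the paper's argument: the per-place identity \(\max_i \log|\tilde x_i|_v/w_i=\frac1m\max_i\log|\tilde x_i^{\,m/w_i}|_v\), together with the product formula applied to a scalar in \(K^\times\) that relates two representatives. The differences are minor. The paper uses \(f^m\in K^\times\), so invariance becomes the projective invariance of the Weil height of \(\phi_m(\tilde\x)\), whereas you use \(\lambda^{k_S}\) obtained via B\'ezout. You also prove non-negativity and integrality with a coprime polynomial representative, which the paper only asserts.
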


\begin{proof}
At each place \(v\) one has
\[
\max_i \frac{\log|\tilde x_i|_v}{w_i} = \frac1m \max_i \log\bigl|\tilde x_i^{\,m/w_i}\bigr|_v ,
\]
so \(m\,H_{\w,q}^{\ff}(\x)\) is the Weil height of the vector \(\phi_m(\tilde\x)\). If \(\tilde{\y}\) is a second representative in \(K^{n+1}\), then \(\tilde{y}_i = f^{w_i} \tilde{x}_i\) for some \(f \in \bar K^\times\) with \(f^{w_i}\in K\) for all \(i\) in the support, hence \(f^{m}\in K^\times\) and \(\phi_m(\tilde\y)=f^m\phi_m(\tilde\x)\). By the product formula
\[
\sum_{v \in M_K} \log |f^m|_v = 0
\]
the two vectors have the same height, which is the height of the point \(\phi_m(\x)\in\bP^n(K)\). The Weil height on \(\bP^n(K)\) is a non-negative integer multiple of \(\log q\).
\end{proof}

The next theorem records Northcott finiteness and identifies the points of height zero.

\begin{thm}
\label{thm:FF-finiteness}
Let $X $ be a weighted hypersurface in $\bP_{\w}^n$ over \(\Fq\) and \(K=\F_q(t)\).
\begin{enumerate}[label=(\roman*)]
\item For any  constant \(B>0\), the set
\(
\{\x \in X_K(K) \mid H_{\w,q}^{\ff}(\x) \leq B\}
\)
is finite.
\item \(H_{\w,q}^{\ff}(\x)\geq 0\), with equality if and only if \(\x\in X(\Fq)\subset X_K(K)\).
\end{enumerate}
\end{thm}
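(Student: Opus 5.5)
The plan is to transport both statements to ordinary projective space through the Veronese-type map \(\phi_m\) and then use the classical facts for the Weil height on \(\bP^n(K)\). By the preceding lemma, \(m\,H_{\w,q}^{\ff}(\x)=h(\phi_m(\x))\), where \(h\) is the logarithmic Weil height on \(\bP^n(K)\) and \(m=\lcm(\w)\). Here \(h([y_0:\dots:y_n])=\sum_v\max_i\log|y_i|_v\). Clearing denominators and dividing by a gcd, one may take the \(y_i\in\Fq[t]\) coprime. Then the finite places contribute \(0\), and the infinite place contributes \(\max_i\deg(y_i)\cdot\log q\).

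\textbf{Step 1: part (ii).} From the normalized description, \(h\geq0\) on \(\bP^n(K)\). Moreover \(h(\y)=0\) exactly when all \(y_i\) are constants, that is, when \(\y\in\bP^n(\Fq)\). Hence \(H_{\w,q}^{\ff}(\x)\geq0\), with equality if and only if \(\phi_m(\x)\in\bP^n(\Fq)\).

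It remains to show that \(\phi_m(\x)\in\bP^n(\Fq)\) forces \(\x\in X(\Fq)\); the converse is clear. If \(\phi_m(\x)=[c_0:\dots:c_n]\) with \(c_i\in\Fq\), then any representative \(\tilde\x\) satisfies \(\tilde x_i^{m/w_i}=\mu c_i\) for a single \(\mu\in K^\times\). I would argue at the level of geometric points, using that \(\phi_m\) is finite and defined over \(\Fq\). The fibre \(\phi_m^{-1}(\phi_m(\x))\) is a finite set of geometric points, and each of them is defined over \(\overline{\F}_q\), because the fibre over an \(\Fq\)-point is a finite \(\Fq\)-scheme. So \(\x\) is a \(K\)-point of \(X_K\) that is defined over \(\overline{\F}_q\).

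Since \(\overline{\F}_q\cap K=\Fq\), such a point is invariant under \(\mathrm{Gal}(\bar K/K)\), whose restriction to \(\overline{\F}_q\) is the full group \(\mathrm{Gal}(\overline{\F}_q/\Fq)\). The point is therefore Frobenius-fixed, i.e.\ lies in \(X(\Fq)\). This uses that the coarse space commutes with base change to \(K\), which holds here because it is a quotient by a linearly reductive group.

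\textbf{Step 2: part (i).} The set \(\{\y\in\bP^n(K): h(\y)\leq mB\}\) is finite: it consists of coprime tuples of polynomials of degree at most \(mB/\log q\) over the finite field \(\Fq\), taken up to \(\Fq^\times\). Since \(\phi_m\) is finite, hence quasi-finite, each fibre \(\phi_m^{-1}(\y)\cap X_K(K)\) is finite. Therefore the set of \(\x\) with \(H_{\w,q}^{\ff}(\x)\leq B\) is contained in a finite union of finite sets.

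\textbf{Main obstacle.} The delicate point is the equality case in (ii), namely descending from "\(\phi_m(\x)\) is constant" to "\(\x\) is constant" on the coarse space. I would carry this out via the Galois-invariance argument above. Alternatively, one can work directly with a representative: choose a representative \(\tilde\x\) normalized so that each \(\tilde x_i\) is a unit at all places. The equalities \(\tilde x_i^{m/w_i}=\mu c_i\) then force each \(\tilde x_i\) to be algebraic over \(\Fq\) inside \(K\), hence in \(\Fq\).
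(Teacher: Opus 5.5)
Your proposal is correct and follows the paper's own proof. Both parts are reduced through \(m\,H^{\ff}_{\w,q}=h\circ\phi_m\) to Northcott and the height-zero locus on \(\bP^n(K)\), finiteness of \(\phi_m\) gives finite fibres, and the equality case uses that the fibre over an \(\Fq\)-point is a finite \(\Fq\)-scheme whose \(K\)-points are \(\Fq\)-points because \(\Fq\) is algebraically closed in \(K\); your Galois-invariance argument restates exactly this, while your coprime-polynomial computation of \(h\) supplies the Northcott and height-zero facts the paper cites, and your closing alternative normalization needs the twists \((a^{w_i/k_S}\tilde x_i)\) of \cref{lem:lifting}(iii) rather than weighted scaling alone to make every coordinate a unit at all places.
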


\begin{proof}
(i) The Veronese map \(\phi_m\colon \bP^n_\w\to\bP^n\) is finite, and \(h(\phi_m(\x)) = m\,H_{\w,q}^{\ff}(\x)\) by the lemma. By the Northcott property for \(\bP^n\) over the global function field \(K\), whose constant field is finite, there are finitely many points of \(\bP^n(K)\) of height at most \(mB\); each has finitely many preimages.

(ii) Non-negativity is that of the Weil height. If \(H_{\w,q}^{\ff}(\x)=0\), then \(\phi_m(\x)\) is a point of height zero in \(\bP^n(K)\), that is, a point of \(\bP^n(\Fq)\). Its fibre \(Z\) under \(\phi_m\) is a finite \(\Fq\)-scheme, and \(Z(K)=Z(\Fq)\) since \(\Fq\) is algebraically closed in \(K\). Conversely a point with coordinates in \(\Fq\) has \(|\tilde x_i|_v\in\{0,1\}\) for all \(v\).
\end{proof}

\begin{rem}
See e.g.\ Bombieri--Gubler \cite{h-silv} \S2.4 for the Northcott property over function fields. Applied over \(\Fqr(t)\), part (ii) says that the \(\Fqr\)-points of \(X\) are precisely the points of function-field height zero. Thus \(H_{\w,q}^{\ff}\) detects nonconstant points over the function field.
\end{rem}

The function-field height is functorial in the following elementary sense.

\begin{prop}
\label{PP2}
Let \(\phi: X \to Y\) be a morphism of weighted varieties defined over \(\F_q\), induced by
\[
\phi([x_0:\dots:x_n]) = [f_0(x):\dots:f_m(x)],
\]
where each \(f_j\in\Fq[x_0,\dots,x_n]\) is weighted homogeneous of degree \(e w'_j\) for a common integer \(e\), with target \(Y\subset \mathbb{P}_{\w'}^m\). Then for every \(\x \in X_K(K)\),
\[
H_{\w',q}^{\ff}(\phi(\x)) \leq e \cdot H_{\w,q}^{\ff}(\x),
\]
with equality if the \(f_j\) have no common zero on \(\hat X\setminus\{0\}\).
\end{prop}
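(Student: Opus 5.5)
The approach is to write the height as a sum of local contributions at the places \(v\) of \(K\) and compare them term by term. The product formula then absorbs the scalar that relates two representatives of the image point.

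Fix \(\x\in X_K(K)\) and a representative \(\tilde\x\in \hat X(K)\setminus\{0\}\), which exists by \cref{lem:lifting}. The vector \(\tilde\y=(f_0(\tilde\x),\dots,f_m(\tilde\x))\) lies in \(K^{m+1}\). It is nonzero, because \(\phi\) is a morphism on \(X\) and so the \(f_j\) do not vanish simultaneously on the cone over the points of \(X\) where \(\phi\) is defined. Hence \(\tilde\y\) is a representative of \(\phi(\x)\). Since the preceding lemma shows the height is independent of the representative, we may compute \(H^{\ff}_{\w',q}(\phi(\x))\) directly from \(\tilde\y\).

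The local step runs as follows. Fix a place \(v\) and put \(M_v=\max_i |\tilde x_i|_v^{1/w_i}\). Each monomial \(c\,x^{\alpha}\) of \(f_j\) has weighted degree \(\sum\alpha_iw_i=e w'_j\), and its coefficient \(c\) lies in \(\Fq\), so \(|c|_v\le 1\). Therefore \(|c\,\tilde x^\alpha|_v\le \prod_i M_v^{\alpha_iw_i}=M_v^{ew'_j}\). The absolute values here are non-archimedean, so the ultrametric inequality gives \(|f_j(\tilde\x)|_v\le M_v^{ew'_j}\). Taking logarithms and dividing by \(w'_j\), then maximizing over \(j\), we get
\[
\max_j \frac{\log|\tilde y_j|_v}{w'_j}\le e\max_i\frac{\log|\tilde x_i|_v}{w_i}.
\]
Summing over \(v\) yields the inequality. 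No constant appears because over \(\Fq(t)\) every place is non-archimedean and the coefficients are constants.

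For equality, I would use the hypothesis that the \(f_j\) have no common zero on \(\hat X\setminus\{0\}\) to obtain a weighted Nullstellensatz on the cone. Applied to the ideal \((f,f_0,\dots,f_m)\), it gives an integer \(N\) such that, for each \(i\), some power of the form \(x_i^{N'}\) with \(N'w_i\) a common multiple lies in \((f_0,\dots,f_m)\) modulo \(f\). Concretely, I would arrange \(x_i^{Nm/w_i}=\sum_j a_{ij}f_j+b_i f\) with \(a_{ij}\in\Fq[x]\) weighted homogeneous of degree \(Nm-ew'_j\); this is possible after replacing \(N\) so that \(e w'_j\mid Nm\)-compatible degrees work, for example by taking \(N\) to be a multiple of \(e\,\lcm(\w')\). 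Evaluating at \(\tilde\x\) kills the \(b_if\) term. The same ultrametric estimate with \(\Fq\)-coefficients gives, at every place,
\[
|\tilde x_i|_v^{Nm/w_i}\le \max_j M_v^{Nm-ew'_j}|\tilde y_j|_v .
\]
From this one derives \(M_v^{e}\le \max_j|\tilde y_j|_v^{1/w'_j}\), which is the reverse local inequality. Summing over \(v\) then gives equality. Once the Nullstellensatz identity is in hand, there is no error term, again because constants have absolute value \(1\) at every place.

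The main obstacle I expect is making the Nullstellensatz step precise: producing a homogeneous identity with \(\Fq\)-coefficients and exponents compatible with the weights. I would first obtain it over \(\overline{\F}_q\) from the projective Nullstellensatz, applied to the radical of \((f,f_0,\dots,f_m)\) containing the irrelevant ideal. I would then descend to \(\Fq\) by linear algebra, since the identity is a solvable linear system over \(\Fq\). Homogeneity comes from taking the appropriate graded component of the identity.
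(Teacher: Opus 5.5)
Your proposal is correct and follows the paper's proof essentially step for step. It uses the same local ultrametric bound through \(M_v=\max_i|\tilde x_i|_v^{1/w_i}\) and \(\Fq\)-coefficients for the inequality, and the same weighted Nullstellensatz identity \(x_i^{N_i}=\sum_j g_{ij}f_j\) on \(\hat X\) for the reverse local bound \(M_v^{e}\le\max_j|\tilde y_j|_v^{1/w'_j}\). Two small points: the implicit step there is choosing \(i\) with \(|\tilde x_i|_v^{1/w_i}=M_v\), and your divisibility adjustment of \(N\) is unnecessary, since taking the graded component of the right weighted degree already suffices.
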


\begin{proof}
Put \(M_v=\max_i|\tilde x_i|_v^{1/w_i}\). A monomial \(\prod x_i^{a_i}\) of weighted degree \(\sum a_iw_i = ew'_j\) satisfies \(\prod|\tilde x_i|_v^{a_i}\le M_v^{\,ew'_j}\). The coefficients of \(f_j\) lie in \(\Fq\) and have absolute value at most \(1\) at every place, so the ultrametric inequality gives \(|f_j(\tilde\x)|_v^{1/w'_j}\le M_v^{\,e}\) for all \(v\) and \(j\). Taking logarithms and summing over \(v\) gives the upper bound. For the reverse inequality, if the \(f_j\) have no common zero on \(\hat X\setminus\{0\}\), the homogeneous Nullstellensatz in the graded coordinate ring of \(\hat X\) gives, for each \(i\), an integer \(N_i\) and weighted-homogeneous \(g_{ij}\) of degree \(w_iN_i-ew'_j\) such that \(x_i^{N_i}=\sum_j g_{ij}f_j\) on \(\hat X\). Put \(M'_v=\max_j|f_j(\tilde\x)|_v^{1/w'_j}\) and choose \(i\) with \(|\tilde x_i|_v^{1/w_i}=M_v\). Then
\[
M_v^{\,w_iN_i} \le \max_j M_v^{\,w_iN_i-ew'_j}\,(M'_v)^{w'_j},
\]
so \(M_v^{\,e}\le M'_v\). No constant appears because all coefficients are constants of \(K\).
\end{proof}

Under reduction of the weights the function-field height scales by the common factor.

\begin{cor}
\label{COR2}
Let \(\x \in X_K(K)\) and \(\x' \in X'_K(K)\) be corresponding points on isomorphic hypersurfaces under the reduction \(\w' = \w/d\). Then
\[
H_{\w',q}^{\ff}(\x') = d \cdot H_{\w,q}^{\ff}(\x).
\]
\end{cor}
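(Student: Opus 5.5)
The plan is to mirror the proof of \cref{COR1}: compare the two heights place by place, on the same representative. Under the reduction the hypersurfaces \(X\subset\bP^n_\w\) and \(X'\subset\bP^n_{\w'}\) are cut out by the same polynomial \(f\), and the identity map on \(\bA^{n+1}\setminus\{0\}\) induces the coarse isomorphism \(\bP^n_\w\iso\bP^n_{\w'}\). This is because \(\lambda\mapsto\lambda^d\) carries \(\w\)-orbits onto \(\w'\)-orbits over \(\bar K\): \(\lambda^{w_i}=(\lambda^d)^{w_i/d}\), and \(\lambda\mapsto\lambda^d\) is surjective on \(\bar K^\times\). So a point \(\x\in X_K(K)\) and its counterpart \(\x'\in X'_K(K)\) have the same geometric representatives. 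By \cref{lem:lifting}(ii) I will fix one \(\tilde\x\in K^{n+1}\setminus\{0\}\) lying over \(\x\). The same vector \(\tilde\x\) is then a \(K\)-representative of \(\x'\).

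With this common representative the comparison is immediate at every place \(v\):
\[
\max_i\frac{\log|\tilde x_i|_v}{w_i/d}=d\,\max_i\frac{\log|\tilde x_i|_v}{w_i}.
\]
Zero coordinates contribute \(-\infty\) on both sides, and the maximum is attained on the support, so the identity holds even when some coordinates vanish. Summing over \(v\in M_K\) gives \(H^{\ff}_{\w',q}(\x')=d\,H^{\ff}_{\w,q}(\x)\). Here I use the preceding lemma, which says both heights are independent of the chosen \(K\)-representative. This independence is what makes it legitimate to evaluate both heights on the same \(\tilde\x\), even though the \(K\)-representatives of \(\x\) and of \(\x'\) form different sets modulo scaling: \(\x'\) has twist group \(H^1(K,\mu_{k_S/d})\) rather than \(H^1(K,\mu_{k_S})\).

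As a consistency check, I will also verify the statement through the Veronese description in the lemma. We have \(\lcm(\w')=m/d\), and \(\phi_{m/d}\) for \(\w'\) and \(\phi_m\) for \(\w\) send \(\tilde\x\) to \((\tilde x_i^{m/(dw_i)})_i\) and \((\tilde x_i^{m/w_i})_i\) respectively. The second vector is the coordinatewise \(d\)-th power of the first. Since \(\log|a^d|_v=d\log|a|_v\), this gives \(h(\phi_m(\x))=d\,h(\phi_{m/d}(\x'))\). Dividing by \(m\) and \(m/d\) respectively recovers the factor \(d\).

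The only real obstacle is the bookkeeping: I must justify that the points \emph{correspond} via a literally shared representative over \(K\). If \(\x'\) were instead described by some other \(K\)-representative \(\tilde\y\) of the \(\w'\)-action, then \(\tilde\y\) and \(\tilde\x\) differ by \(\mu^{w_i/d}\) with \(\mu\in\bar K^\times\). Writing \(\mu=\lambda^d\) exhibits \(\tilde\y\) as a \(\w\)-twist of \(\tilde\x\), and the well-definedness lemma removes any dependence on the choice.
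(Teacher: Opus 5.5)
Your main argument is the paper's proof: the two points share the representative \(\tilde\x\), the identity \(\frac{\log|\tilde x_i|_v}{w_i/d}=d\cdot\frac{\log|\tilde x_i|_v}{w_i}\) holds at every place, and summing over \(v\) gives the factor \(d\).

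Your Veronese ``consistency check,'' however, is miscomputed and should be corrected or dropped. Since \(\lcm(\w')=m/d\) and \((m/d)/(w_i/d)=m/w_i\), both \(\phi_{m/d}\) (for \(\w'\)) and \(\phi_m\) (for \(\w\)) send \(\tilde\x\) to the same vector \((\tilde x_i^{m/w_i})_i\). Hence \(h(\phi_{m/d}(\x'))=h(\phi_m(\x))\), and the factor \(d\) comes solely from dividing by \(m/d\) instead of \(m\).

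As you wrote it, the exponent \(m/(dw_i)\) need not even be an integer: for \(\w=(2,2,4)\) and \(i=2\) it equals \(1/2\). Moreover, your relation \(h(\phi_m(\x))=d\,h(\phi_{m/d}(\x'))\) would give \(H^{\ff}_{\w',q}(\x')=H^{\ff}_{\w,q}(\x)\), which contradicts the statement rather than confirming it.
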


\begin{proof}
The two points have the same representatives, and at every place \[\frac{\log|\tilde x_i|_v}{w_i/d} = d\cdot\frac{\log|\tilde x_i|_v}{w_i}.\]
\end{proof}

In terms of line bundles, the gerbe \(\pi\colon\cP_\w\to\cP_{\w'}\) satisfies \(\pi^*\Oo_{\cP_{\w'}}(1)\simeq\Oo_{\cP_\w}(d)\), and \cref{COR2} is the corresponding identity of heights; we return to this in \cref{sec:6}.

%*************************************************************************************************
\subsection{Weighted Probabilistic Measures}

For the third construction we use the size of the weighted orbit on the affine cone.  The orbit cannot be taken in \(\bP^n_\w\), where \([\la^{w_0}x_0:\cdots:\la^{w_n}x_n]\) represents the same point as \(\x\).

\begin{defn}
For a point \(\x \in X(\F_{q^r})\) with representative \(\tilde\x\in\hat X(\Fqr)\setminus\{0\}\), the \textbf{probabilistic weighted height} is
\[
H_{\w,q}^{\prob}(\x)
=
-\log\left(
\frac{|\Orb_{\w}(\tilde\x)|}{|\hat X(\F_{q^r})\setminus\{0\}|}
\right),
\]
where \(\Orb_{\w}(\tilde\x)\) denotes the orbit of \(\tilde\x\) under the weighted action of
\(\F_{q^r}^{\times}\), namely
\[
\Orb_{\w}(\tilde\x)
=
\left\{
(\lambda^{w_0}x_0,\dots,\lambda^{w_n}x_n)
\;\middle|\;
\lambda\in\F_{q^r}^{\times}
\right\}\subset \hat X(\Fqr)\setminus\{0\}.
\]
\end{defn}

Thus \(H_{\w,q}^{\prob}(\x)\) is the negative logarithm of the probability that a uniformly chosen nonzero point of the cone lies in the orbit of \(\tilde\x\).

\begin{prop}\label{prop:prob-formula}
The function \(H_{\w,q}^{\prob}\) is independent of the choice of representative, and
\[
H_{\w,q}^{\prob}(\x) = \log |X(\Fqr)| + \log g_{\x}(r),
\qquad g_\x(r)=\gcd(k_{S(\x)},q^r-1).
\]
In particular
\[
\log|X(\Fqr)| \;\le\; H_{\w,q}^{\prob}(\x) \;\le\; \log|X(\Fqr)| + \log\max_i w_i ,
\]
with equality on the left if and only if \(\x\) has a single twist over \(\Fqr\).
\end{prop}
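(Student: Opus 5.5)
The plan is to compute the orbit size directly from \cref{lem:twists} and substitute it into the definition. First, independence of the representative: any two \(\Fqr\)-representatives of \(\x\) in the same weighted \(\Fqr^\times\)-orbit obviously have the same orbit. A representative in a different orbit (a different twist) could, a priori, give a different orbit size. However, by \cref{lem:twists}(ii), all \(\Fqr\)-representatives of \(\x\) have the same support \(S=S(\x)\). The stabilizer of a vector of support \(S\) in \(\Fqr^\times\) is \(\{\lambda:\lambda^{w_i}=1,\ i\in S\}=\mu_{k_S}(\Fqr)\), of order \(g_S(r)\). Hence every such orbit has exactly \((q^r-1)/g_\x(r)\) elements, and the orbit size depends only on \(\x\).

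Next, for the denominator, use \cref{lem:twists}(iii): \(\#(\hat X(\Fqr)\setminus\{0\})=(q^r-1)|X(\Fqr)|\). Substituting gives
\[
H^{\prob}_{\w,q}(\x)=-\log\frac{(q^r-1)/g_\x(r)}{(q^r-1)|X(\Fqr)|}=\log|X(\Fqr)|+\log g_\x(r).
\]
The only point needing care is that the orbit is counted inside the cone, not in \(\bP^n_\w\). This is exactly what the definition prescribes, and \cref{lem:twists}(i) guarantees that the orbit lies in \(\hat X(\Fqr)\setminus\{0\}\).

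For the bounds, \(g_\x(r)\ge1\) gives the lower bound, with equality iff \(g_\x(r)=1\). By \cref{lem:twists}(ii), this is exactly the condition that \(\x\) has a single twist over \(\Fqr\). For the upper bound, \(g_\x(r)\mid k_S\), and \(k_S=\gcd(w_i:i\in S)\le w_i\) for any \(i\in S\), which is nonempty since \(\tilde\x\neq0\). Hence \(g_\x(r)\le\max_i w_i\).

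I expect no real obstacle. The one step worth stating explicitly is the uniformity of the orbit size across different twists over \(\x\). That uniformity rests on the support being an invariant of the coarse point, so the stabilizer \(\mu_{k_S}(\Fqr)\) is the same for every twist.
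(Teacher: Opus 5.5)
Your proposal is correct and is essentially the paper's proof: both take the orbit size \((q^r-1)/g_\x(r)\) and the cone count \((q^r-1)\,|X(\Fqr)|\) from \cref{lem:twists}, form the quotient, and deduce the bounds from \(1\le g_\x(r)\le k_{S(\x)}\le\max_i w_i\). One small point: invariance of the support does not need \cref{lem:twists}(ii), since weighted scaling never changes which coordinates vanish.
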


\begin{proof}
By \cref{lem:twists}, every orbit of a representative of \(\x\) has \((q^r-1)/g_\x(r)\) elements and the cone has \((q^r-1)|X(\Fqr)|\) nonzero points. The quotient is \(1/(g_\x(r)\,|X(\Fqr)|)\), which depends only on the support of \(\x\). The bounds follow from \(1\le g_\x(r)\le k_{S(\x)}\le\max_iw_i\).
\end{proof}

\begin{prop}
	\label{ppp3}
Let \(X \subseteq \mathbb{P}_\w^n\) and \(Y \subseteq \mathbb{P}_{\w'}^m\) be weighted hypersurfaces defined over \(\mathbb{F}_q\), and let \(\phi: X \to Y\) be a morphism of weighted varieties as in \cref{PP2}. For any point \(\x \in X(\mathbb{F}_{q^r})\),
\[
H_{\w',q}^{\prob}(\phi(\x)) - H_{\w,q}^{\prob}(\x) = \log \left( \frac{|Y(\mathbb{F}_{q^r})|}{|X(\mathbb{F}_{q^r})|} \right) + \log \left( \frac{g_{\phi(\x)}(r)}{g_{\x}(r)} \right).
\]
\end{prop}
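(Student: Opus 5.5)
The identity will follow by applying the closed formula of \cref{prop:prob-formula} twice and subtracting. First I check that the formula applies on the target side. The point \(\phi(\x)\) lies in \(Y(\Fqr)\), because \(\phi\) is defined by polynomials \(f_j\) with coefficients in \(\Fq\). Evaluating them on an \(\Fqr\)-representative \(\tilde\x\in\hat X(\Fqr)\setminus\{0\}\) gives a vector in \(\Fqr^{m+1}\). Since \(\phi\) is a morphism of weighted varieties, this vector is nonzero and lies on \(\hat Y\), so it is an \(\Fqr\)-representative of \(\phi(\x)\). In any case \cref{lem:twists}(i) applied to \(Y\) already guarantees that \(\phi(\x)\) has a representative in \(\hat Y(\Fqr)\setminus\{0\}\). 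Hence \(H^{\prob}_{\w',q}(\phi(\x))\) is defined, and it is independent of the representative by \cref{prop:prob-formula}.

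Next I write down the two instances of \cref{prop:prob-formula}. On \(X\) I have
\[
H^{\prob}_{\w,q}(\x)=\log|X(\Fqr)|+\log g_\x(r).
\]
On \(Y\), with weights \(\w'\), I have
\[
H^{\prob}_{\w',q}(\phi(\x))=\log|Y(\Fqr)|+\log g_{\phi(\x)}(r),
\]
where \(g_{\phi(\x)}(r)=\gcd(k'_{S(\phi(\x))},q^r-1)\) and \(k'_S\) is computed from \(\w'\). The quantities \(|X(\Fqr)|\) and \(|Y(\Fqr)|\) are positive, since they contain \(\x\) and \(\phi(\x)\) respectively. The quantities \(g\) are at least \(1\), so all the logarithms are finite.

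Subtracting the two expressions and combining the logarithms gives exactly the stated identity.

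The only point requiring care is that the orbit in the definition of \(H^{\prob}_{\w',q}\) must be taken for the \(\w'\)-action on \(\hat Y\). Likewise the count \(|\hat Y(\Fqr)\setminus\{0\}|=(q^r-1)|Y(\Fqr)|\) comes from \cref{lem:twists}(iii) applied to \(Y\), not to \(X\). Once this bookkeeping is respected, no genuine obstacle remains. The statement makes no claim relating \(g_{\phi(\x)}(r)\) to \(g_\x(r)\), so I do not need to compare the supports of \(\x\) and \(\phi(\x)\).
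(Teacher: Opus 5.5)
Your proposal is correct and is essentially the paper's own argument: apply \cref{prop:prob-formula} to \(\x\in X(\Fqr)\) and to \(\phi(\x)\in Y(\Fqr)\) (for the weights \(\w'\)), then subtract. The bookkeeping you add is left implicit in the paper and is harmless; it checks that \(\phi(\x)\) is an \(\Fqr\)-point of \(Y\) with a representative on \(\hat Y\), and that the orbit and cone count on the target are taken for the \(\w'\)-action.
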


\begin{proof}
Subtract the two expressions given by \cref{prop:prob-formula}.
\end{proof}

The following corollary describes the behaviour of the probabilistic weighted height under reduction of the weights.

\begin{cor}
\label{COR3}
Let \(X \subset \mathbb{P}_\w^n\) and \(X' \subset \mathbb{P}_{\w'}^n\) be the isomorphic hypersurfaces with weights related by \(\w' = \w/d\). For any point \(\x \in X(\mathbb{F}_{q^r})\) with corresponding point \(\x'\) in \(X'\) and support \(S\), we have
\[
H_{\w,q}^{\prob}(\x) - H_{\w',q}^{\prob}(\x') = \log \left( \frac{\gcd(k_S, q^r - 1)}{\gcd(k_S/d, q^r - 1)} \right),
\]
which lies between \(0\) and \(\log\gcd(d,q^r-1)\), and equals \(\log\gcd(d,q^r-1)\) whenever \(\gcd(d,k_S/d)=1\).
\end{cor}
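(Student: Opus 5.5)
The plan is to apply \cref{prop:prob-formula} to both hypersurfaces and subtract. For \(\x\in X(\Fqr)\) it gives
\[
H_{\w,q}^{\prob}(\x)=\log|X(\Fqr)|+\log\gcd(k_S,q^r-1),
\]
and likewise for \(\x'\in X'(\Fqr)\) in \(\bP^n_{\w'}\). Three inputs will be used.

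\begin{enumerate}[label=(\alph*)]
\item \emph{Point counts agree.} The coarse hypersurfaces \(X\) and \(X'\) are isomorphic as varieties over \(\Fq\). Indeed, \(\bP^n_\w\iso\bP^n_{\w'}\) by the identity on coordinates, and \(f\) is homogeneous of degree divisible by \(d\) for both weightings. Hence \(|X(\Fqr)|=|X'(\Fqr)|\), and the logarithmic terms cancel.
\item \emph{Supports correspond.} Corresponding points share homogeneous coordinates, so they have the same support \(S\).
\item \emph{Stabilizer orders.} The gcd of the reduced weights over \(S\) is \(k_S/d\). This is also the stabilizer order recorded in \cite[Prop.~4.1]{paper-1}.
\end{enumerate}

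Subtracting the two formulas then yields the displayed identity directly.

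For the bounds, put \(N=q^r-1\), \(a=k_S/d\), and write \(\gcd(da,N)\) against \(\gcd(a,N)\).

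\begin{itemize}
\item \emph{Lower bound.} Since \(\gcd(a,N)\mid\gcd(da,N)\), the ratio is an integer \(\ge1\), so its logarithm is \(\ge0\).
\item \emph{Upper bound.} I will show that \(\gcd(da,N)\) divides \(\gcd(a,N)\gcd(d,N)\). This reduces to a prime-by-prime valuation check. With \(\alpha=v_\ell(a)\), \(\delta=v_\ell(d)\) and \(\nu=v_\ell(N)\), the inequality \(\min(\alpha+\delta,\nu)\le\min(\alpha,\nu)+\min(\delta,\nu)\) is elementary. Hence the ratio is at most \(\gcd(d,N)\).
\item \emph{Equality case.} If \(\gcd(d,a)=1\), then \(\gcd\) is multiplicative across coprime factors: \(\gcd(da,N)=\gcd(d,N)\gcd(a,N)\). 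This gives equality with \(\log\gcd(d,q^r-1)\).
\end{itemize}

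The only step needing care is the upper bound via valuations; everything else is bookkeeping from \cref{prop:prob-formula}.
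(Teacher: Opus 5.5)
Your proposal is correct and takes essentially the same route as the paper. You subtract the two instances of \cref{prop:prob-formula}, using equal coarse point counts and stabilizer orders \(k_S\) and \(k_S/d\), then bound the ratio by a prime-by-prime valuation comparison and get the equality case from multiplicativity of \(\gcd(\cdot,N)\) on coprime factors; your divisibility \(\gcd(da,N)\mid\gcd(a,N)\gcd(d,N)\) is just a repackaging of the paper's computation \(\min(a,n)-\min(a-e,n)\in[0,\min(e,n)]\).
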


\begin{proof}
The coarse varieties coincide, so \(|X(\Fqr)|=|X'(\Fqr)|\), and the stabilizers are \(\mu_{k_S}\) and \(\mu_{k_S/d}\). For the quotient of the two gcd's, put \(N=q^r-1\), which is prime to \(p\). For a prime \(\ell\) write \(a=v_\ell(k_S)\), \(e=v_\ell(d)\), \(n=v_\ell(N)\); then
\[
v_\ell\Bigl(\frac{\gcd(k_S,N)}{\gcd(k_S/d,N)}\Bigr)=\min(a,n)-\min(a-e,n)\in[0,\min(e,n)],
\]
so the quotient is an integer dividing \(\gcd(d,N)\), in any characteristic. When \(p\nmid d\) this is the index computed cohomologically in \cite[Cor.~4.3]{paper-1}. Finally \(\gcd(ab,N)=\gcd(a,N)\gcd(b,N)\) when \(\gcd(a,b)=1\), which gives equality when \(\gcd(d,k_S/d)=1\).
\end{proof}

Unlike \cref{COR1} and \cref{COR2}, the change is point-dependent: it is the local contribution of the \(\mu_d\)-gerbe at \(\x\).

%*******************************************************************
\section{Entropy and Point Distribution in Weighted Varieties}
\label{sec:4}

We attach Shannon entropy to the finite height profiles defined in \cref{sec:3}.  The resulting quantities measure only the distribution of rational points among the corresponding strata; no coding-theoretic interpretation is used below. Throughout, \(X\subset\bP^n_\w\) is a weighted hypersurface over \(\Fq\) and \(r\) is such that \(X(\Fqr)\neq\varnothing\).

We shall use the following running example.

\begin{exa}\label{exa:running}
Let \(\w=(1,1,2)\) and \(X=\{x_0=0\}\subset\bP_\w^2\), so that \(X\iso\bP(1,2)\) with coordinates \([x_1:x_2]\) and coarse space \(\bP^1\). Then \(|X(\Fqr)|=q^r+1\). The point \([0:0:1]\) has stabilizer \(\mu_2\), all other points have trivial stabilizer, and \(g_{[0:0:1]}(r)=\gcd(2,q^r-1)\). For \(q\) odd there are two twists over \([0:0:1]\), and \(q^r+2\) twists in all.
\end{exa}

%-----------------------------------------------

\subsection{Degree-Based Stratification and Entropy}
We begin with the degree-based weighted height \( H_{\w,q}^{\deg} \), which measures minimal extension degrees and coordinate complexity adjusted by weights.

For a fixed extension degree \( r \geq 1 \), the set of \(\F_{q^r}\)-rational points of \( X \subseteq \bP^n_{\w} \) stratified by this height is
\[
\cH_m^{\deg}(X, \F_{q^r}) := \{ \x \in X(\F_{q^r}) \mid H_{\w,q}^{\deg}(\x) = m \}.
\]
These sets partition \( X(\F_{q^r}) \):
\[
X(\F_{q^r}) = \bigsqcup_{m \in \mathbb{R}_{> 0}} \cH_m^{\deg}(X, \F_{q^r}).
\]
The height profile function is
\[
\mu_{q^r}^{\deg}(m) := \frac{|\cH_m^{\deg}(X, \F_{q^r})|}{|X(\F_{q^r})|},
\]
representing the empirical distribution. It is a probability mass function with finite support.

We define the \textbf{degree-based height entropy} by
\[
\Ent_{q^r}^{\deg}(X) := -\sum_{m} \mu_{q^r}^{\deg}(m) \log \mu_{q^r}^{\deg}(m),
\]
where the sum is over \( m \) with \(\mu_{q^r}^{\deg}(m) > 0\).

\begin{prop}\label{prop:entropy-bound}
	For every weighted hypersurface
\(
X\subset \bP_{\w}^{n}
\)
with \(X(\F_{q^r})\neq \varnothing\),
\[
0
\leq
\Ent_{q^r}^{\deg}(X)
\leq
\log |\Supp(\mu_{q^r}^{\deg})|.
\]
Moreover,
\[
|\Supp(\mu_{q^r}^{\deg})|
\leq
\sigma(r)\,\frac{w_0+\cdots+w_n}{\min_i w_i},
\]
where \(\sigma(r)\) is the sum of the divisors of \(r\), and \(\Ent_{q}^{\deg}(X)=0\).
\end{prop}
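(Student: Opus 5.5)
The plan has two parts. The entropy inequalities are the standard Shannon bounds: \(\mu^{\deg}_{q^r}\) is a probability mass function with finite support, so \(0\le \Ent^{\deg}_{q^r}(X)\le \log|\Supp(\mu^{\deg}_{q^r})|\), with the upper bound coming from concavity of \(-t\log t\) (Jensen). The real content is therefore the bound on the size of the support, i.e.\ on the number of distinct values that \(H^{\deg}_{\w,q}\) takes on \(X(\Fqr)\).

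First I will show that \(r_\x\mid r\) for every \(\x\in X(\Fqr)\).
\begin{enumerate}[label=(\alph*)]
\item By \cref{lem:lifting}(ii), \(\x\) has a representative over \(\Fqr\), so \(r_\x\le r\).
\item The coarse point \(\x\) is fixed by both \(\mathrm{Frob}_{q^{r_\x}}\) and \(\mathrm{Frob}_{q^r}\), hence by \(\mathrm{Frob}_{q^{g}}\) with \(g=\gcd(r_\x,r)\).
\item So \(\x\) is an \(\F_{q^g}\)-point of the coarse variety, and \cref{lem:lifting}(ii) applied over \(\F_{q^g}\) gives a representative with coordinates in \(\F_{q^g}\).
\item Minimality of \(r_\x\) then forces \(g=r_\x\), that is, \(r_\x\mid r\).
\end{enumerate}
This divisibility step is the one place where care is needed: it uses that the lifting lemma holds over every finite field and not only over \(\Fqr\).

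Next I will parametrize the possible values of the inner min--max term. Fix \(e=r_\x\). By the definition of the height, the minimum is attained at some \(\lambda\), and its value equals \(a/w_j\) for some index \(j\) and some positive integer \(a=\deg(\lambda^{w_j}x_j)\). Taking \(\lambda=1\) on a representative over \(\F_{q^e}\), as in the proof of \cref{prop:deg-bounds}, shows the minimum is at most \(e/\min_i w_i\). Hence \(a/w_j\le e/\min_i w_i\), i.e.\ \(1\le a\le e\,w_j/\min_i w_i\).

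It follows that \(H^{\deg}_{\w,q}(\x)=e\,a/w_j\) lies in a set indexed by triples \((e,j,a)\), with \(e\mid r\), \(0\le j\le n\), and \(a\) as above. The number of such triples is at most
\[
\sum_{e\mid r}\sum_{j=0}^n \frac{e\,w_j}{\min_i w_i}=\sigma(r)\,\frac{w_0+\cdots+w_n}{\min_i w_i}.
\]
This overcounts, since different triples may give the same value, but an upper bound is all that is needed.

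Finally, for \(r=1\), \cref{prop:deg-bounds} gives \(H^{\deg}_{\w,q}(\x)=1/\min_i w_i\) for every \(\x\in X(\Fq)\). So the support is a single point and \(\Ent^{\deg}_q(X)=0\).
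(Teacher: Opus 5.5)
Your proof is correct and follows the paper's argument essentially step for step: the Shannon bound, the parametrization of the inner min--max value as \(a/w_j\) with \(a\le r_\x w_j/\min_i w_i\), summation over \(j\) and over the divisors \(r_\x\mid r\), and \cref{prop:deg-bounds} for the case \(r=1\). The one addition is your Frobenius-plus-lifting argument for \(r_\x\mid r\), which the paper simply asserts, and it validly fills that step.
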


\begin{proof}
	The first inequality is the standard entropy bound for a probability measure
	on a finite set.

	It therefore suffices to estimate the number of possible values of
	\(H_{\w,q}^{\deg}\) on \(X(\Fqr)\).  For such a point \(r_{\x}\mid r\). The inner min-max term is attained at some \(\la\) and some index \(j\), so it has the form \(k/w_j\) with \(k\) a positive integer, and by \cref{prop:deg-bounds} it is at most \(r_\x/\min_iw_i\). Hence \(k\le r_\x w_j/\min_i w_i\), and for fixed \(r_\x\) and \(j\) there are at most \(r_\x w_j/\min_iw_i\) values. Summing over \(j\) and over the divisors \(r_\x\) of \(r\) gives the bound. For \(r=1\) the height is constant on \(X(\Fq)\) by \cref{prop:deg-bounds}.
\end{proof}

\begin{exa}
For the hypersurface of \cref{exa:running} and \( r = 1 \), every \(\x\in X(\Fq)\) has \( H_{\w,q}^{\deg}(\x) = 1 \), so \(\cH_1^{\deg}(X, \F_q) = X(\F_q)\) and \(\mu_q^{\deg}(1) = 1\), yielding \(\Ent_q^{\deg}(X) = 0\). For \( r = 2 \), a point \([0:1:x_2]\) with \( x_2 \in \F_{q^2} \setminus \F_q \) has \(r_\x=2\) and inner term \(\max(1,1,2/2)=1\), hence height \(2\). Thus
\[
\mu_{q^2}^{\deg}(1)=\frac{q+1}{q^2+1},\qquad \mu_{q^2}^{\deg}(2)=\frac{q^2-q}{q^2+1},
\]
which diversifies \(\mu_{q^2}^{\deg}\).
\end{exa}

As \( r \to \infty \), one may ask for the limiting distribution
\(
\mu^{\deg}(m) := \lim_{r \to \infty} \mu_{q^r}^{\deg}(m).
\)
It exists, but it is not a probability measure.

\begin{prop}[Escape of mass]\label{prop:escape}
Let \( X \) be geometrically irreducible of dimension \(n-1\ge1\). Then \(\mu^{\deg}(m)=0\) for every \(m\). More precisely, for every \(M>0\),
\[
\frac{\#\{\x\in X(\Fqr) : H^{\deg}_{\w,q}(\x)\le M\}}{|X(\Fqr)|} = O\bigl(q^{-r(n-1)}\bigr).
\]
\end{prop}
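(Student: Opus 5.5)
The plan is to bound the numerator by counting points of low extension degree and to compare with Lang--Weil for the denominator. By \cref{prop:deg-bounds}, every \(\x\) with \(H^{\deg}_{\w,q}(\x)\le M\) satisfies \(r_\x\le M\min_i w_i=:R\). So the numerator is at most the number of \(\x\in X(\Fqr)\) with \(r_\x\le R\). Every such point is a coarse point of \(X\) whose minimal field of definition is \(\F_{q^{r_\x}}\), where \(r_\x\le R\) and \(r_\x\mid r\). It therefore lies in \(X(\F_{q^{j}})\) for some \(j\le R\). (Here we use that a point with a representative over \(\F_{q^j}\) is an \(\F_{q^j}\)-point of the coarse space, and that \(\F_{q^j}\subseteq\Fqr\) when \(j\mid r\).) This yields the bound
\[
\#\{\x\in X(\Fqr): H^{\deg}_{\w,q}(\x)\le M\}\;\le\;\sum_{j\le R}|X(\F_{q^j})|\;\le\;\sum_{j\le R}\frac{q^{j(n+1)}-1}{q^j-1}=:C_M .
\]
For the second inequality we use the weight-independent count of \(\bP^n_\w(\F_{q^j})\) recalled in \cref{sec:2}. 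The important feature is that \(C_M\) is independent of \(r\).

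For the denominator, \(X\) is geometrically irreducible of dimension \(n-1\). The coarse space is a projective variety, since \(\bP^n_\w\) is projective through the finite map \(\phi_m\). The Lang--Weil estimate therefore gives
\[
|X(\Fqr)|=q^{r(n-1)}+O\bigl(q^{r(n-1)-r/2}\bigr).
\]
In particular, for all \(r\) large enough we have \(|X(\Fqr)|\ge \tfrac12 q^{r(n-1)}\). To avoid the weighted setting, one can apply Lang--Weil to the image \(\phi_m(X)\subset\bP^n\), which is geometrically irreducible of the same dimension. Since \(\phi_m\) is a bijection on geometric points (raising to a power is injective up to the weighted action over \(\overline{\F}_q\)), it is also a bijection on \(\Fqr\)-points of coarse spaces. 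The alternative is to apply Lang--Weil directly to the affine cone, combined with \cref{lem:twists}(iii): the cone is geometrically irreducible of dimension \(n\), and \(\#(\hat X(\Fqr)\setminus\{0\})=(q^r-1)|X(\Fqr)|\). I expect to use this cone version, because it needs no properties of the coarse space.

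Combining the two steps gives a ratio of at most \(2C_M q^{-r(n-1)}\) for large \(r\). This is \(O(q^{-r(n-1)})\), and it tends to \(0\) because \(n-1\ge1\); hence \(\mu^{\deg}(m)\le\lim_r 2C_Mq^{-r(n-1)}=0\) for every \(m\). The main obstacle is justifying geometric irreducibility of the cone from that of \(X\), so that Lang--Weil applies. The cone minus the origin is a \(\G_m\)-bundle over the stack, so it is irreducible when \(X\) is. If this causes trouble I would instead fall back on the \(\phi_m\) route, which needs the irreducibility of \(\phi_m(X)\). That follows immediately because it is the image of an irreducible variety.
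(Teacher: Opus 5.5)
Your argument is correct and is essentially the paper's proof: \cref{prop:deg-bounds} forces \(r_\x\le M\min_i w_i\), so the numerator is bounded by the \(r\)-independent sum \(\sum_{e\le M\min_i w_i}|X(\F_{q^e})|\), and Lang--Weil gives \(|X(\Fqr)|=q^{r(n-1)}(1+o(1))\); your further bound by \(|\bP^n_\w(\F_{q^e})|\) is harmless but unnecessary. One caution: your fallback claim that \(\phi_m\) is a bijection on geometric points is false (for \(\w=(1,1,2)\) and \(q\) odd, the distinct points \([1:1:0]\) and \([1:-1:0]\) both map to \([1:1:0]\)), so drop that route and rely on Lang--Weil for the projective coarse space directly, or on your cone argument, both of which are sound.
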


\begin{proof}
By \cref{prop:deg-bounds}, the inequality \(H^{\deg}_{\w,q}(\x)\le M\) forces \(r_\x\le M\min_iw_i\). Thus the numerator is bounded by \(\sum_{e\le M\min_i w_i}|X(\F_{q^e})|\), independently of \(r\). The denominator is \(q^{r(n-1)}(1+o(1))\) by Lang--Weil \cite{lang-weil}.
\end{proof}

\begin{rem}
The points of \(X(\Fqr)\) with \(r_\x<r\) number \(O(q^{r(n-1)/2})\), so almost all points are primitive, and for these \cref{prop:deg-bounds} gives \(r/\min_i w_i\le H^{\deg}_{\w,q}(\x)\le r^2/\min_iw_i\). The natural object as \(r\to\infty\) is therefore the distribution of the normalized height \(H^{\deg}_{\w,q}/r^2\) on \(X(\Fqr)\), supported in \((0,1/\min_iw_i]\).
\end{rem}

We do not pursue the limiting distribution of the normalized degree-based height in this paper.

%----------------------------------------------------------------------
\subsection{Function Field Stratification and Entropy}
We apply the function field weighted height \( H_{\w,q}^{\ff} \), which lives on \(X_K(K)\), \( K = \F_q(t) \). By \cref{thm:FF-finiteness} the points of \(X(\Fq)\) form the stratum of height zero, so this stratification continues the finite-field point set into the function field. Let \(m=\lcm(\w)\) and
\[
\cH_k^{\ff}(X) := \Bigl\{ \x \in X_K(K) \;\Big|\; H_{\w,q}^{\ff}(\x) = \frac{k\log q}{m} \Bigr\},\qquad N^{\ff}(k)=|\cH^{\ff}_k(X)|,
\]
which is finite for every \(k\ge0\), with \(\cH^{\ff}_0(X)=X(\Fq)\). For a bound \(B\ge0\) the height profile function is
\[
\mu_{B}^{\ff}(k) := \frac{N^{\ff}(k)}{\sum_{j\le B}N^{\ff}(j)},\qquad 0\le k\le B,
\]
and the \textbf{function field height entropy} is
\[
\Ent_{B}^{\ff}(X) := -\sum_{k\le B} \mu_{B}^{\ff}(k) \log \mu_{B}^{\ff}(k).
\]
The entropy satisfies \( 0 \leq \Ent_{B}^{\ff}(X) \leq \log (B+1) \).

When the strata grow exponentially, the profile concentrates near the top stratum and the entropy remains bounded, the limit being governed by the growth rate alone. Exponential growth is not automatic: if \(X\) is a curve of genus at least \(2\), every morphism \(\bP^1\to X\) is constant, so \(X_K(K)=X(\Fq)\) and \(N^{\ff}(k)=0\) for \(k\ge1\). We therefore assume it.

\begin{prop}\label{prop:ff-geometric}
Suppose \(N^{\ff}(k)=c\,\rho^{k}(1+o(1))\) as \(k\to\infty\), for some \(c>0\) and \(\rho>1\). Then the distribution of \(B-k\) under \(\mu^{\ff}_B\) converges to the geometric distribution \(j\mapsto(1-\rho^{-1})\rho^{-j}\), and
\[
\lim_{B\to\infty}\Ent^{\ff}_B(X) = -\log\bigl(1-\rho^{-1}\bigr)+\frac{\log\rho}{\rho-1}.
\]
\end{prop}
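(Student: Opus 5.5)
Write \(N_k=N^{\ff}(k)\) and \(S_B=\sum_{k\le B}N_k\). The plan is to reduce everything to an elementary limit for a sequence that is asymptotically geometric, by reindexing from the top stratum. Put \(j=B-k\) and
\[
p_B(j)=\mu^{\ff}_B(B-j)=\frac{N_{B-j}}{S_B},\qquad 0\le j\le B.
\]
First I will show \(S_B\sim c\rho^{B}\cdot\frac{\rho}{\rho-1}\). Write \(N_k=c\rho^k(1+\varepsilon_k)\) with \(\varepsilon_k\to0\). Then
\[
S_B=\frac{c\rho^{B+1}}{\rho-1}(1+o(1))+\sum_{k\le B}c\rho^k\varepsilon_k .
\]
The error sum is \(o(\rho^B)\) by a Cesàro/Toeplitz argument: split at \(k=K\), where \(|\varepsilon_k|<\eta\) for \(k>K\). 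The part \(k\le K\) is \(O(1)\), and the part \(k>K\) is at most \(\eta\,c\rho^{B+1}/(\rho-1)\).

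Pointwise convergence follows directly: for fixed \(j\),
\[
p_B(j)=\frac{c\rho^{B-j}(1+o(1))}{S_B}\to(1-\rho^{-1})\rho^{-j}.
\]
This is the convergence in distribution claimed in the statement. The limiting masses sum to \(1\), so by Scheffé's lemma the convergence is also in total variation.

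For the entropy, I would avoid a general continuity-of-entropy argument, since entropy is not continuous under total variation on infinite supports. Instead I will establish a uniform tail bound. Choose \(K_0\) with \(N_k\le 2c\rho^k\) for \(k\ge K_0\), and note that \(N_k\le C\) for \(k<K_0\). With \(S_B\ge c'\rho^B\), this gives for all \(j\le B\)
\[
p_B(j)\le A\rho^{-j}+A\rho^{-B}\mathbf 1_{j>B-K_0}.
\]
Since \(t\mapsto -t\log t\) is increasing on \([0,e^{-1}]\) and \(-t\log t\le t\cdot\text{const}\cdot(1+\log(1/t))\), the terms with \(j\ge J\) contribute at most \(\sum_{j\ge J}A\rho^{-j}(\log A^{-1}+j\log\rho)\), plus a term \(K_0\cdot A\rho^{-B}\cdot O(B)\to0\) from the last \(K_0\) indices. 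This is small uniformly in \(B\) once \(J\) is large. The first \(J\) terms converge by pointwise convergence, which gives
\[
\lim_{B\to\infty}\Ent^{\ff}_B(X)=-\sum_{j\ge0}\pi_j\log\pi_j,\qquad \pi_j=(1-\rho^{-1})\rho^{-j}.
\]

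Finally, I compute the entropy of the geometric law:
\[
-\sum_j\pi_j\bigl(\log(1-\rho^{-1})-j\log\rho\bigr)=-\log(1-\rho^{-1})+\log\rho\sum_j j\pi_j .
\]
The mean of this law is \(\sum_j j\pi_j=\rho^{-1}/(1-\rho^{-1})=1/(\rho-1)\), which gives the stated formula.

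The main obstacle is the uniform tail control needed to interchange the limit with the entropy sum, and in particular the contribution of the low strata \(k<K_0\). There the asymptotic gives no information, but those strata carry mass only \(O(\rho^{-B})\), and the dominated bound above handles them.
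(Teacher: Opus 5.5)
Your proposal is correct and follows the same route as the paper: the asymptotics of the denominator, pointwise convergence of \(\mu^{\ff}_B(B-j)\), a uniform geometric majorant combined with the monotonicity of \(-x\log x\) near \(0\) to pass to the limit in the entropy sum, and the explicit entropy of the geometric law. Your separate treatment of the low strata is harmless but not needed: for \(j\le B\) one has \(\rho^{-B}\le\rho^{-j}\), so the paper's single bound \(\mu^{\ff}_B(B-j)\le C\rho^{-j}\) already covers those strata.
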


\begin{proof}
The denominator is \(c\rho^{B}(1-\rho^{-1})^{-1}(1+o(1))\), so \(\mu_B^{\ff}(B-j)\to(1-\rho^{-1})\rho^{-j}\) for each fixed \(j\). Moreover \(\mu^{\ff}_B(B-j)\le C\rho^{-j}\) uniformly in \(B\), and \(-x\log x\) is increasing near \(0\), so the terms of the entropy sum are dominated by a summable sequence and the entropy converges to that of the limit. For \(p_j=(1-\tau)\tau^{j}\) with \(\tau=\rho^{-1}\) one computes \(-\sum_jp_j\log p_j=-\log(1-\tau)-\frac{\tau}{1-\tau}\log\tau\).
\end{proof}

\begin{exa}\label{exa:P1}
Let \(\w=(1,1,1)\) and \(X=\{x_0=0\}\iso\bP^1\). A point of \(X_K(K)\) of height \(k\log q\) is a morphism \(\bP^1\to\bP^1\) of degree \(k\) over \(\Fq\), so \(N^{\ff}(0)=q+1\) and \(N^{\ff}(k)=q^{2k+1}-q^{2k-1}\) for \(k\ge1\). Here \(\rho=q^2\), and
\(
\lim_B\Ent^{\ff}_B(X) = -\log(1-q^{-2})+\frac{2\log q}{q^2-1}.
\)
\end{exa}

\begin{rem}\label{rem:phillips}
Two counting problems over \(K\) must be kept apart. In \cite{Phillips2024} the objects of \(\cP_\w(K)\), that is the \(K^\times\)-orbits, are counted by the height with ceilings, which is \(H^{\can}_{\Oo(1)}\) of \cref{sec:6}. The strata \(\cH^{\ff}_k\) above consist instead of coarse points ordered by the stable height; by \cref{prop:ff-spec} this is the Weil height of the image under \(\phi_m\), divided by \(m\).  These are different counting problems.  In the running example the coarse space is \(\bP^1\), and the explicit formula of \cref{exa:P1} applies.  More generally, if several dominant poles occur on the circle \(|u|=\rho^{-1}\), the coefficient asymptotics can acquire a periodic factor, as noted after \cref{prop:entropy-pole}; no single-term asymptotic is asserted here without the hypothesis of \cref{prop:ff-geometric}.
\end{rem}

%------------------------------------------------------------

\subsection{Probabilistic Stratification and Entropy}
We use the probabilistic weighted height \( H_{\w,q}^{\prob} \), based on orbit rarity.
The stratified sets are
\[
\cH_m^{\prob}(X, \F_{q^r}) := \{ \x \in X(\F_{q^r}) \mid H_{\w,q}^{\prob}(\x) = m \},
\]
partitioning \( X(\F_{q^r}) \). By \cref{prop:prob-formula} these are the \textbf{isotropy strata}: \(\x\) and \(\y\) lie in the same stratum if and only if \(g_\x(r)=g_\y(r)\). The height profile function is
\[
\mu_{q^r}^{\prob}(m) := \frac{|\cH_m^{\prob}(X, \F_{q^r})|}{|X(\F_{q^r})|}.
\]
The \textbf{probabilistic height entropy} is
\[
\Ent_{q^r}^{\prob}(X) := -\sum_{m} \mu_{q^r}^{\prob}(m) \log \mu_{q^r}^{\prob}(m),
\]
summing over \( m \) with \(\mu_{q^r}^{\prob}(m) > 0\). The number of strata is at most the number of distinct values of \(g_S(r)\), so \( 0 \leq \Ent_{q^r}^{\prob}(X) \leq \log \#\{g_S(r) : \emptyset\ne S\subseteq T\} \).

\begin{rem}
If \(\gcd(w_i,q^r-1)=1\) for all \(i\), then every \(g_S(r)=1\), there is a single stratum, and \(\Ent_{q^r}^{\prob}(X)=0\). The entropy \(\Ent_{q^r}^{\prob}(X)\) measures isotropy diversity; it is nonzero only in the presence of the Kummer twists of \cite{paper-1}, which are the finite-field counterpart of the gcd-obstructions in \cite{sh-94}.
\end{rem}

\begin{exa}\label{exa:prob}
For the hypersurface of \cref{exa:running} and \(q\) odd there are two strata, the point \([0:0:1]\) and its complement, so
\[
\Ent^{\prob}_{q^r}(X) = \log(q^r+1)-\frac{q^r}{q^r+1}\log q^r ,
\]
which is asymptotic to \(r\log q\cdot q^{-r}\). For \(q\) even the entropy vanishes for all \(r\).
\end{exa}

%-------------------------------------------------------------------------
\subsection{Effect of Reduction of the Weights on Entropy}
The preceding scaling formulas give the corresponding comparison of entropy profiles.

\begin{prop}\label{prop:reduction-entropy}
Let $X \subset \bP_\w^n$ and $X' \subset \bP_{\w'}^n$ be the isomorphic hypersurfaces under the reduction $\w' = \w/d$.
\begin{enumerate}[label=(\roman*)]
\item $\Ent_{q^r}^{\deg}(X) = \Ent_{q^r}^{\deg}(X')$ for all \(r\).
\item $\Ent_{B}^{\ff}(X) = \Ent_{B}^{\ff}(X')$ for all \(B\).
\item $\Ent_{q^r}^{\prob}(X) \le \Ent_{q^r}^{\prob}(X')$, with equality if \(\gcd(d,q^r-1)=1\), and the inequality can be strict.
\end{enumerate}
\end{prop}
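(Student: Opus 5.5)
Parts (i) and (ii) follow from the scaling corollaries, because entropy only depends on the partition of the point set into level sets. Part (iii) needs a separate argument: there the partition itself may change.

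For (i), \cref{COR1} gives \(H^{\deg}_{\w',q}(\x')=d\,H^{\deg}_{\w,q}(\x)\) under the identification \(X(\Fqr)=X'(\Fqr)\) of coarse points. The map \(m\mapsto dm\) is a bijection of level values, so \(\cH^{\deg}_{dm}(X',\Fqr)=\cH^{\deg}_m(X,\Fqr)\). The profiles \(\mu^{\deg}\) then agree after relabelling, and Shannon entropy is invariant under relabelling.

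For (ii), \cref{COR2} gives \(H^{\ff}_{\w',q}=d\,H^{\ff}_{\w,q}\). Write \(m=\lcm(\w)=d\,m'\) with \(m'=\lcm(\w')\). A point in the stratum \(H^{\ff}_{\w,q}=k\log q/m\) then has \(H^{\ff}_{\w',q}=k\log q/m'\). So the stratum indices coincide, \(N^{\ff}_{X'}(k)=N^{\ff}_X(k)\) for every \(k\), and the truncated profiles for each \(B\) are identical.

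For (iii), by \cref{prop:prob-formula} the strata of \(X\) are the fibres of \(\x\mapsto g_{S(\x)}(r)=\gcd(k_S,q^r-1)\), and those of \(X'\) are the fibres of \(\x\mapsto\gcd(k_S/d,q^r-1)\). The key claim is that the first function factors through the second. Put \(N=q^r-1\) and \(c=\gcd(k_S/d,N)\). Prime by prime, with \(a=v_\ell(k_S)\), \(e=v_\ell(d)\), \(n=v_\ell(N)\), the proof of \cref{COR3} gives \(v_\ell(c)=\min(a-e,n)\). The value \(\min(a,n)\) is determined by this: it equals \(n\) if \(v_\ell(c)=n\), and \(v_\ell(c)+e\) otherwise, where \(e\) and \(n\) are fixed by \(d\) and \(r\). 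Hence \(\gcd(k_S,N)\) is a function of \(\gcd(k_S/d,N)\). So the partition for \(X\) is coarser than, or equal to, the partition for \(X'\). Merging cells cannot increase entropy (concavity, i.e. \(-(p+p')\log(p+p')\le -p\log p-p'\log p'\)), which gives \(\Ent^{\prob}_{q^r}(X)\le\Ent^{\prob}_{q^r}(X')\).

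If \(\gcd(d,N)=1\), then \(\min(e,n)=0\) at every prime, and \(\gcd(k_S,N)=\gcd(k_S/d,N)\) by \cref{COR3}. The two partitions coincide and equality holds.

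For strictness I would give an explicit example, and this is the step I expect to need the most care: two supports must become distinguishable after reduction but be merged before it. Take \(\w=(2,2,4)\) with \(d=2\) and \(\w'=(1,1,2)\), let \(X=\{x_0=0\}\), and let \(q\) be odd but \(q^r\not\equiv1\pmod 4\) (e.g. \(q=3\), \(r=1\)). The supports are \(\{1\}\), \(\{1,2\}\) and \(\{2\}\), with \(k_S=2,2,4\). For \(X\) the values \(\gcd(k_S,N)\) are all \(2\), so there is a single stratum and the entropy is \(0\). For \(X'\) the support \(\{2\}\) has \(\gcd(2,N)=2\) while the others give \(1\). This is the two-stratum situation of \cref{exa:prob}, with positive entropy.
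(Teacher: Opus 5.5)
This is correct and is essentially the paper's proof: relabelling of strata via \cref{COR1} and \cref{COR2} (with \(\lcm(\w')=\lcm(\w)/d\) keeping the index \(k\) fixed) for (i)--(ii), a prime-by-prime check that the \(\w\)-isotropy partition is coarser than the \(\w'\)-partition for (iii), and the same \((2,2,4)\) versus \((1,1,2)\) example with \(q^r\equiv 3\pmod 4\) for strictness. One slip needs fixing: when \(v_\ell(c)<n\) the value is \(\min(a,n)=\min(v_\ell(c)+e,n)\), not \(v_\ell(c)+e\) (take \(a=3\), \(e=n=2\)); the uniform formula \(\min(a,n)=\min(v_\ell(c)+e,n)\) holds in both cases and still shows \(\gcd(k_S,N)\) is a function of \(\gcd(k_S/d,N)\), so your argument stands.
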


\begin{proof}
i) The point sets \(X(\Fqr)\) and \(X'(\Fqr)\) coincide, and by \cref{COR1} the stratum $\cH_m^{\deg}(X,\Fqr)$ is the stratum $\mathcal{H}_{d \cdot m}^{\deg}(X',\Fqr)$. The two profiles differ by the relabeling \(m\mapsto dm\), which does not change the entropy.

ii) By \cref{COR2} the same argument applies; note that \(\lcm(\w')=\lcm(\w)/d\), so the index \(k\) of a stratum is unchanged.

iii) If \(\gcd(d,q^r-1)=1\), then \(\gcd(k_S,q^r-1)=\gcd(k_S/d,q^r-1)\) for every \(S\) and the isotropy strata coincide. In general the relationship between $H_{\w,q}^{\prob}(\x)$ and $H_{\w',q}^{\prob}(\x')$ is point-dependent by \cref{COR3}. We claim that the isotropy stratification for \(\w\) is coarser than that for \(\w'\). Let \(N=q^r-1\) and let \(k_1',k_2'\) be two values of \(k_S/d\) with \(\gcd(k'_1,N)=\gcd(k'_2,N)\). For each prime \(\ell\), either \(v_\ell(k_1')=v_\ell(k'_2)\) or both are at least \(v_\ell(N)\); in either case \(\min(v_\ell(dk'_1),v_\ell(N))=\min(v_\ell(dk'_2),v_\ell(N))\), so \(\gcd(dk_1',N)=\gcd(dk'_2,N)\). Merging strata does not increase the entropy, which gives the inequality. For example, take \(X'\) as in \cref{exa:running} and \(\w=(2,2,4)\), with \(q^r\equiv 3\pmod 4\). For \(\w'\) the point \([0:0:1]\) has \(g=2\) and all others have \(g=1\), so \(\Ent^{\prob}_{q^r}(X')>0\) by \cref{exa:prob}. For \(\w\) the point \([0:0:1]\) has \(g=\gcd(4,q^r-1)=2\) and all others have \(g=\gcd(2,q^r-1)=2\), so there is a single stratum and \(\Ent^{\prob}_{q^r}(X)=0\). When \(q^r\equiv1\pmod 4\) the two entropies agree.
\end{proof}

Thus the sensitivity to a common reduction of the weights is carried by the isotropy, that is, by the stack and not by the coarse variety. This is made precise in \cref{sec:5}.

%*************************************
\subsection{Lang--Weil Asymptotics for Isotropy Entropy}\label{subsec:lang-weil-entropy}
The Lang-Weil estimate \cite{lang-weil} provides an asymptotic for the number of $\Fqr$-rational points on a geometrically irreducible variety $Y$ of dimension $\delta$:
\[
|Y(\Fqr)| = q^{r \delta} + O(q^{r (\delta - 1/2)}),
\]
where the implicit constant depends on the degree and embedding of $Y$.
For the probabilistic height, all but the non-generic isotropy strata have the same value.  The size of the exceptional locus can therefore be estimated by Lang--Weil.

Let \(X\) be geometrically irreducible, let \(S_X\) be the support of its generic point and \(d_X=k_{S_X}\), so that \(\mu_{d_X}\) is the generic stabilizer of \(\mathfrak{X}\); if \(X\) lies in no coordinate hyperplane then \(d_X=d\). Put
\[
\Sigma_r := \{\x\in X \mid g_\x(r)\neq \gcd(d_X,q^r-1)\},\qquad \varepsilon_r := \frac{|\Sigma_r(\Fqr)|}{|X(\Fqr)|}.
\]
Since \(d_X\mid k_S\) for every support \(S\) occurring on \(X\), and \(k_S\mid k_{S'}\) for \(S'\subseteq S\), the set \(\Sigma_r\) is a proper closed subset of \(X\).  Let \(M_{\rm iso}\) be a common period of the finitely many sequences \(g_S(r)\) arising from supports on \(X\).  Then \(\Sigma_r\), and hence its codimension \(c_r\), depends only on \(r\bmod M_{\rm iso}\).

There is a second, independent periodicity.  For each residue class modulo \(M_{\rm iso}\), consider the geometric irreducible components of maximal dimension of the corresponding closed set \(\Sigma_r\), and let \(M_{\rm Frob}\) be the least common multiple of their Frobenius orbit lengths (with \(M_{\rm Frob}=1\) if all the sets are empty).  Put
\[
M:=\operatorname{lcm}(M_{\rm iso},M_{\rm Frob}).
\]
For nonempty \(\Sigma_r\), let \(N_r\) be the number of its maximal-dimensional geometric irreducible components fixed by \(\operatorname{Frob}_q^r\), equivalently the number defined over \(\Fqr\).  Thus both \(c_r\) and \(N_r\) are periodic modulo \(M\).  If \(\Sigma_r=\varnothing\), set \(c_r:=+\infty\), \(N_r:=0\), and \(q^{-r\infty}:=0\).  We also use \(0\log0:=0\), and write \(h(\varepsilon)=-\varepsilon\log\varepsilon-(1-\varepsilon)\log(1-\varepsilon)\).

\begin{thm}\label{thm:prob-asymptotics}
Let \(X \subset \bP_\w^n\) be a geometrically irreducible weighted hypersurface over \(\F_q\) of dimension \(n-1\ge1\), and let \(r\) satisfy \(X(\Fqr)\ne\varnothing\).  Let \(\kappa_r\) be the number of distinct values among the \(g_S(r)\) occurring on \(X(\Fqr)\).  Whenever \(\varepsilon_r\le 1/2\),
\[
h(\varepsilon_r)\le \Ent^{\prob}_{q^r}(X)
\le h(\varepsilon_r)+\varepsilon_r\log(\kappa_r-1)
\]
when \(\kappa_r\ge2\); if \(\kappa_r=1\), then \(\varepsilon_r=0\) and \(\Ent^{\prob}_{q^r}(X)=0\).

Uniformly in \(r\), one has \(\varepsilon_r=O(q^{-r})\) and hence
\[
\Ent^{\prob}_{q^r}(X)=O(rq^{-r}).
\]
More precisely, if \(\Sigma_r\neq\varnothing\) and \(N_r\ge1\), then along each residue class modulo \(M\),
\[
\varepsilon_r=N_rq^{-rc_r}\bigl(1+O(q^{-r/2})\bigr),
\qquad
\Ent^{\prob}_{q^r}(X)
=c_rN_r\,r\log q\,q^{-rc_r}\bigl(1+o(1)\bigr).
\]
The implied constants can be chosen uniformly over the finitely many residue classes modulo \(M\).
\end{thm}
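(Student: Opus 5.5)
The argument has two parts. The first is an elementary bound on the entropy of a distribution dominated by one atom. The second is a Lang--Weil estimate for \(\Sigma_r(\Fqr)\).

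\emph{The entropy inequality.} By \cref{prop:prob-formula} the strata are the level sets of \(g_\x(r)\) on \(X(\Fqr)\). The generic value is \(\gcd(d_X,q^r-1)\), and it is attained on the dense open complement of \(\Sigma_r\). So the profile \(\mu^{\prob}_{q^r}\) has one atom of mass \(1-\varepsilon_r\). The remaining mass \(\varepsilon_r\) is spread over at most \(\kappa_r-1\) further atoms.

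If \(\kappa_r=1\), every point of \(X(\Fqr)\) has the generic value. Hence \(\Sigma_r(\Fqr)=\varnothing\), \(\varepsilon_r=0\), and the entropy is \(0\).

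If \(\kappa_r\ge2\), I will use the grouping (chain) rule. Write the entropy as \(h(\varepsilon_r)+\varepsilon_r H'\), where \(H'\) is the entropy of the conditional distribution on the exceptional atoms. Then \(0\le H'\le\log(\kappa_r-1)\), which gives both bounds at once. The hypothesis \(\varepsilon_r\le1/2\) is only used to identify the generic atom as the largest one. Strictly speaking the inequality needs only the decomposition into the generic atom and the rest.

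\emph{Size of \(\varepsilon_r\).} Since \(\Sigma_r\) depends only on \(r\bmod M_{\rm iso}\), there are finitely many closed sets \(\Sigma^{(j)}\), \(j\in\Z/M_{\rm iso}\). Each is a union of coordinate strata intersected with \(X\), so it is a proper closed subset of \(X\) of dimension at most \(n-2\).

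For the numerator, a Lang--Weil bound applied to the finitely many geometric components gives \(|\Sigma^{(j)}(\Fqr)|=O(q^{r(n-2)})\), uniformly in \(r\). This includes the trivial bound for components not defined over \(\Fqr\): such a component contributes only the points on its intersection with its Frobenius conjugates, which has lower dimension.

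For the denominator, Lang--Weil for the geometrically irreducible \(X\) gives \(|X(\Fqr)|=q^{r(n-1)}(1+O(q^{-r/2}))\). Hence \(\varepsilon_r=O(q^{-r})\). Because \(-x\log x\) is monotone near \(0\), this yields \(h(\varepsilon_r)+\varepsilon_r\log(\kappa_r-1)=O(rq^{-r})\), uniformly in \(r\). Here \(\kappa_r\) is bounded by \(2^{n+1}\).

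\emph{Precise asymptotic.} Fix a residue class modulo \(M\). Along it, the set \(\Sigma_r\), its codimension \(c_r\), and the set of its top-dimensional components (of dimension \(n-1-c_r\)) are constant. The Frobenius action on these components is periodic with period dividing \(M_{\rm Frob}\), so the number of components defined over \(\Fqr\) is exactly \(N_r\).

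I will count points component by component.
\begin{itemize}
\item Each top-dimensional component defined over \(\Fqr\) contributes \(q^{r(n-1-c_r)}(1+O(q^{-r/2}))\) points.
\item Components not defined over \(\Fqr\) have their \(\Fqr\)-points inside the intersection with a Frobenius conjugate. This intersection has smaller dimension, so it contributes \(O(q^{r(n-2-c_r)})\).
\item Lower-dimensional components and pairwise overlaps likewise contribute \(O(q^{r(n-2-c_r)})\).
\end{itemize}
Dividing by \(|X(\Fqr)|\) gives \(\varepsilon_r=N_rq^{-rc_r}(1+O(q^{-r/2}))\).

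Then I expand the entropy. The term \(-\varepsilon_r\log\varepsilon_r\) equals \(\varepsilon_r\bigl(rc_r\log q-\log N_r+o(1)\bigr)\). The term \(-(1-\varepsilon_r)\log(1-\varepsilon_r)\) is \(O(\varepsilon_r)\). The correction \(\varepsilon_r H'\) is at most \(\varepsilon_r\log(\kappa_r-1)\), which is \(O(\varepsilon_r)\). Since \(c_r\ge1\), the leading term \(c_rN_r\,r\log q\,q^{-rc_r}\) dominates all of these \(O(\varepsilon_r)\) terms, which gives the stated asymptotic.

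Uniformity follows because there are only finitely many residue classes, and each involves finitely many fixed varieties with fixed degrees.

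\emph{Main obstacle.} The delicate step is the bookkeeping of Frobenius orbits of components. One has to justify that a component not defined over \(\Fqr\) has all its \(\Fqr\)-points in a lower-dimensional set. This holds because such points lie on the component and on its Frobenius image, which is a different component. One must also check that the error terms are uniform. I would handle this by fixing, for each residue class, the finite list of components, and applying a uniform Lang--Weil bound depending only on their degrees.
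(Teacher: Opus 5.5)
Your proposal is correct and is essentially the paper's proof: the grouping identity $\Ent^{\prob}_{q^r}(X)=h(\varepsilon_r)+\varepsilon_r E_r$ with $0\le E_r\le\log(\kappa_r-1)$, then Lang--Weil applied to the finitely many sets $\Sigma_r$ (one per class modulo $M_{\rm iso}$) and to their Frobenius-stable top-dimensional components, with lower-dimensional pieces and intersections with Frobenius conjugates absorbed into the error term. One correction to your commentary: the hypothesis $\varepsilon_r\le 1/2$ (indeed $\varepsilon_r<1$ suffices) is what guarantees that the generic value $\gcd(d_X,q^r-1)$ is actually attained on $X(\Fqr)$, and you need this both for ``at most $\kappa_r-1$ exceptional atoms'' and for the implication $\kappa_r=1\Rightarrow\varepsilon_r=0$, so it does more than single out the largest atom.
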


\begin{proof}
If \(\kappa_r=1\), there is a single isotropy stratum and the entropy is zero.  Assume \(\kappa_r\ge2\).  The stratum \(X(\Fqr)\setminus\Sigma_r(\Fqr)\) has mass \(1-\varepsilon_r\).  By the grouping property of Shannon entropy,
\[
\Ent^{\prob}_{q^r}(X)=h(\varepsilon_r)+\varepsilon_r E_r,
\]
where \(E_r\) is the entropy of the conditional distribution on the remaining at most \(\kappa_r-1\) strata.  Hence \(0\le E_r\le\log(\kappa_r-1)\), proving the first assertion.

Only finitely many closed subschemes \(\Sigma_r\) occur as \(r\) varies modulo \(M_{\rm iso}\).  Since each is a proper closed subset of \(X\), Lang--Weil gives the uniform bound \(|\Sigma_r(\Fqr)|=O(q^{r(\dim X-1)})\), while \(|X(\Fqr)|=q^{r\dim X}(1+O(q^{-r/2}))\).  Thus \(\varepsilon_r=O(q^{-r})\), and the entropy bound follows from \(h(\varepsilon)=O(\varepsilon\log(1/\varepsilon))\).

Fix now a residue class modulo \(M\) for which \(\Sigma_r\neq\varnothing\) and \(N_r\ge1\).  On that class the closed set \(\Sigma_r\), its codimension \(c_r\), and the number \(N_r\) of Frobenius-stable top-dimensional geometric components are constant.  Lang--Weil, applied to those components and to their lower-dimensional intersections, gives
\[
|\Sigma_r(\Fqr)|=N_rq^{r(\dim X-c_r)}+O\bigl(q^{r(\dim X-c_r-1/2)}\bigr).
\]
Dividing by the Lang--Weil estimate for \(X\) gives the asserted formula for \(\varepsilon_r\).  Finally,
\(h(\varepsilon)=\varepsilon\log(1/\varepsilon)+O(\varepsilon)\), while the conditional-entropy term is \(O(\varepsilon_r)\); this yields the stated leading term for \(\Ent^{\prob}_{q^r}(X)\).
\end{proof}

\begin{rem}
The period in \cref{thm:prob-asymptotics} has two sources.  The closed set \(\Sigma_r\) is controlled by the periodic isotropy functions \(g_S(r)\), whereas \(N_r\) is controlled by the Frobenius action on the geometric components of that closed set.  These periods need not coincide: \cref{exa:fe-diag} has constant order-two isotropy over odd fields, while its two geometric exceptional points are rational only over extensions in which \(-1\) becomes a square.  The leading entropy term is therefore a Lang--Weil term on arithmetic subsequences, not a function of the isotropy period alone.
\end{rem}

Thus the codimension of the non-generic isotropy locus determines the exponential rate of decay, while the leading coefficient records its Frobenius-stable top-dimensional components.

%****************
\section{Groupoid mass and isotropy}\label{sec:5}
Reduction of the weight vector leaves the coarse variety unchanged but changes the stabilizers.  The quotient-stack description records this distinction and makes the mass identities used below transparent.

\subsection{Weighted Projective Stacks}
Weighted projective spaces admit a natural interpretation as algebraic stacks. The weighted projective space \(\bP_\w^n\) over a field \(k\) is the coarse space of the smooth quotient stack
\[
\cP_\w := [(\bA^{n+1}_k \setminus \{0\}) / \G_m],
\]
where the action of \(\lambda \in \G_m\) is weighted: \(\lambda \cdot (x_0, \dots, x_n) = (\lambda^{w_0} x_0, \dots, \lambda^{w_n} x_n)\). The stabilizers are the group schemes \(\mu_{k_S}\), which are linearly reductive, so \(\cP_\w\) is a tame stack in the sense of \cite{AbramovichOlssonVistoli2008, AbramovichOlssonVistoli2008-2} in every characteristic; it is a Deligne--Mumford stack exactly when \(\ch k\) divides no \(w_i\). The isomorphism classes of objects in the groupoid \(\cP_\w(\Fq)\) map onto the rational points of the coarse space, the fibres being the twists, with each object having automorphisms reflecting the stabilizer under the weighted action; see \cite[Sec.~3.1]{paper-1}.

We use the structure theory of tame stacks from \cite{AbramovichOlssonVistoli2008}.

\begin{rem}[Conventions on the characteristic]\label{rem:char}
We separate three kinds of statements. (a) All statements about the groupoids \(\mathfrak X(\Fqr)\)---twists, rational automorphisms, mass, the heights \(H^{\prob}\) and \(H^{\stack}\), the entropies, the loci \(X^\e\) and \(\Sigma_r\), and the zeta function \(Z^{\can}_H\)---are statements about point counts of the tame Artin stack \(\mathfrak X\) and hold in every characteristic; only the prime-to-\(p\) part of \(k_S\) enters, since \(\gcd(k_S,q^r-1)\) is prime to \(p\). (b) The identification of the loci \(X^\e\) with the components of the inertia stack, the comparison of the cohomology of \(\mathfrak X\) and \(X\), and the word ``twisted sector'' in its usual sense, require \(\mathfrak X\) to be a tame Deligne--Mumford stack, that is \(p\nmid w_i\) for all \(i\); when \(p\mid k_S\) the stabilizer \(\mu_{k_S}\) is non-reduced and the inertia stack is not a disjoint union of such loci. (c) Comparisons with Chen--Ruan cohomology, orbifold Euler characteristics and index theorems refer to the characteristic-zero theory and are made by analogy only. When \(p\) divides some weight we still call \(X^\e\), \(\e\in\Fqr^\times\), a \textbf{rational isotropy locus}, and use ``twisted sector'' only under the hypothesis of (b).
\end{rem}

\subsection{Groupoid Cardinality and Stacky Heights}
For a weighted hypersurface \(\mathfrak{X} \subset \cP_\w\) defined over \(\Fq\), viewed as a closed substack, the \(\Fqr\)-points form a groupoid. For each object \(\xi \in \mathfrak{X}(\Fqr)\), its automorphism group \(\Aut(\xi)(\Fqr)\) is the group of \(\Fqr\)-points of the stabilizer \(\mu_{k_S}\), of order \(g_S(r)\) (\cref{lem:twists}).

We define the \textbf{stacky cardinality}, or mass, of a set \(\mathcal S\) of isomorphism classes in \(\mathfrak{X}(\Fqr)\) as
\[
|\mathcal S|_{\Orb}=\#^{\stack}(\mathcal S) := \sum_{[\xi] \in \mathcal S} \frac{1}{|\Aut(\xi)(\Fqr)|}.
\]
This reduces to ordinary cardinality when all stabilizers are trivial.

\begin{prop}\label{prop:mass}
For every \(r\ge1\) and every \(\x\in X(\Fqr)\), the twists lying over \(\x\) have total mass \(1\). Consequently the pushforward of the mass measure on \(\pi_0(\mathfrak{X}(\Fqr))\) to the coarse space is the counting measure on \(X(\Fqr)\), and
\[
\#^{\stack}(\mathfrak{X}(\F_{q^r})) = |X(\Fqr)| .
\]
The mass of a twist \(\xi\) is \(|\Orb_\w(\tilde\x)|/(q^r-1)\), where \(\tilde\x\) is any vector in the orbit \(\xi\).
\end{prop}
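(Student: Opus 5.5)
The plan is to read everything off \cref{lem:twists}(ii), which describes the fibre of \(\pi_0(\mathfrak X(\Fqr))\to X(\Fqr)\) over a coarse point \(\x\) of support \(S\). That fibre consists of exactly \(g_S(r)\) twists. Each twist \(\xi\) has \(|\Aut(\xi)(\Fqr)|=g_S(r)\), so it contributes mass \(1/g_S(r)\) to \(\#^{\stack}\).

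First, summing over the \(g_S(r)\) twists above \(\x\) gives total mass \(g_S(r)\cdot g_S(r)^{-1}=1\). This proves the fibrewise statement.

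Second, the pushforward of the mass measure assigns to each \(\x\in X(\Fqr)\) the total mass of its fibre, which is \(1\). So the pushforward is the counting measure on \(X(\Fqr)\), and summing over all \(\x\) gives \(\#^{\stack}(\mathfrak X(\Fqr))=|X(\Fqr)|\).

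Third, the orbit formula. A twist \(\xi\) is an \(\Fqr^\times\)-orbit \(\Orb_\w(\tilde\x)\) of representatives (\cref{lem:lifting}(i)). By orbit–stabilizer for the weighted action of \(\Fqr^\times\), the stabilizer of \(\tilde\x\) is \(\{\la:\la^{w_i}=1\ \forall i\in S\}=\mu_{k_S}(\Fqr)\), of order \(g_S(r)\). Hence
\[
|\Orb_\w(\tilde\x)|=\frac{q^r-1}{g_S(r)},
\]
and \(|\Orb_\w(\tilde\x)|/(q^r-1)=1/g_S(r)\) is exactly the mass of \(\xi\). This ratio does not depend on which vector \(\tilde\x\) of the orbit is chosen, because all vectors in one orbit share the support \(S\).

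There is no real obstacle here. The only point to check is that the rational automorphism group of the object \(\xi\) coincides with the stabilizer of \(\tilde\x\) in \(\Fqr^\times\). For the quotient stack \([(\hat X\setminus\{0\})/\G_m]\), automorphisms of the trivial torsor with equivariant map \(\tilde\x\) are the \(\la\in\G_m(\Fqr)\) fixing \(\tilde\x\), which is exactly the statement recorded in \cref{lem:twists}(ii).
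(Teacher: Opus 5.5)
Your proposal is correct and follows the paper's own proof: both read off from \cref{lem:twists}(ii) that there are \(g_\x(r)\) twists over \(\x\), each with \(g_\x(r)\) rational automorphisms, which gives fibre mass \(1\) and hence \(\#^{\stack}(\mathfrak X(\Fqr))=|X(\Fqr)|\). Both then get the orbit formula from the orbit size \((q^r-1)/g_\x(r)\), which you rederive by orbit--stabilizer with stabilizer \(\mu_{k_S}(\Fqr)\).
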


\begin{proof}
By \cref{lem:twists} there are \(g_\x(r)\) twists over \(\x\), each with automorphism group of order \(g_\x(r)\) and orbit of size \((q^r-1)/g_\x(r)\). For \(X=\bP^n_\w\) this is \cite[Prop.~3.9]{paper-1}.
\end{proof}

\begin{exa}\label{exa:mass}
For the hypersurface of \cref{exa:running}, whose stack is the stacky line \(\cP(1,2)\) with a \(\mu_2\)-point, and \(q\) odd: there are \(q\) twists with trivial automorphism group and two twists over \([0:0:1]\), each of mass \(1/2\). Thus the number of twists is \(q+2\), while \(\#^{\stack}(\mathfrak{X}(\Fq)) = q + 1=|X(\Fq)|\).
\end{exa}

Two of the heights of \cref{sec:3} scale by \(d\) under reduction, and each admits a stacky normalization. They are defined on different sets and must be distinguished: \(H^{\deg}_{\w,q}\) is a function on \(X(\overline{\F}_q)\), while \(H^{\ff}_{\w,q}\) is a function on \(X_K(K)\), \(K=\Fq(t)\), and vanishes identically on the constant points \(X(\Fq)\) by \cref{thm:FF-finiteness}. We set
\[
H_{\w,q}^{\stack,\deg}(\x) := |\Aut(\x)| \cdot H^{\deg}_{\w,q}(\x)
\quad (\x\in X(\overline{\F}_q)),
\qquad
H_{\w,q}^{\stack,\ff}(\x) := |\Aut(\x)| \cdot H^{\ff}_{\w,q}(\x)
\quad (\x\in X_K(K)),
\]
where in both cases \(|\Aut(\x)|=k_{S(\x)}\)
is the order of the geometric stabilizer \(\mu_{k_{S(\x)}}\) of \(\x\). \cref{prop:stack-height-inv} below, on invariance under reduction, holds for both, since it is a statement about heights alone. Everything after it---the strata, the profile \(\mu^{\stack}_{q^r}\) and the entropy \(\Ent^{\stack}_{q^r}\)---is formulated for the degree-based height on the finite set \(X(\Fqr)\), and we write \(H^{\stack}_{\w,q}\) for \(H^{\stack,\deg}_{\w,q}\) from here on; \cref{thm:main}(ii) refers to that version. The function-field height does not in general admit an untruncated finite-mass normalization: \(X_K(K)\) may be infinite, and whenever a coarse \(K\)-point has \(k_S>1\) its fibre of twists under \(K^\times/(K^\times)^{k_S}\) is infinite, by \cref{lem:lifting}. Neither is automatic---the quotient is trivial when \(k_S=1\), and \(X_K(K)=X(\Fq)\) when \(X\) is a curve of genus at least \(2\), as noted in \cref{sec:4}---but in general the normalized mass measure need not exist, so we use the bounded-height normalization \(\mu^{\ff}_B\) of \cref{sec:4} instead. On a fixed support stratum, this normalization agrees with reducing the common divisor \(k_S\) from the corresponding subvector of weights.

\begin{prop}\label{prop:stack-height-inv}
Each of \(H_{\w,q}^{\stack,\deg}\) and \(H_{\w,q}^{\stack,\ff}\) is invariant under reduction of the weights. That is, if \(\w' = \w / d\), then \(H_{\w',q}^{\stack,\bullet}(\x') = H_{\w,q}^{\stack,\bullet}(\x)\) for corresponding points \(\x\) and \(\x'\).
\end{prop}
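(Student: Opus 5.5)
The plan is to combine the scaling identities already proved for the unnormalized heights with the behaviour of the geometric stabilizers under reduction. The two factors in \(H^{\stack,\bullet}\) scale in opposite directions by exactly \(d\), so their product is unchanged.

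First I will fix a point \(\x\) of \(X\), either in \(X(\overline{\F}_q)\) or in \(X_K(K)\), and let \(\x'\) be the corresponding point of \(X'\). As in the proofs of \cref{COR1} and \cref{COR2}, \(\x\) and \(\x'\) are given by the same homogeneous coordinates. Hence they have the same support \(S=S(\x)=S(\x')\). This holds over \(\overline{\F}_q\) and over \(K\) alike, since by \cref{lem:lifting}(ii) both points have representatives over the base field and the identity on coordinates matches these representatives.

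Next I will compute the two geometric stabilizer orders. For \(\w\), the stabilizer of a point of support \(S\) is \(\mu_{k_S}\) with \(k_S=\gcd(w_i:i\in S)\). For \(\w'=\w/d\), it is \(\gcd(w_i/d:i\in S)=k_S/d\); this is also the statement from \cite[Prop.~4.1]{paper-1} recalled in \cref{sec:2}. So \(|\Aut(\x')|=|\Aut(\x)|/d\), and \(d\mid k_S\) because \(d=k_T\) divides every weight.

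Finally I will multiply. For the degree-based version, \cref{COR1} gives \(H^{\deg}_{\w',q}(\x')=d\,H^{\deg}_{\w,q}(\x)\), and therefore
\[
H^{\stack,\deg}_{\w',q}(\x')=\frac{k_S}{d}\cdot d\,H^{\deg}_{\w,q}(\x)=H^{\stack,\deg}_{\w,q}(\x).
\]
The function-field version is identical, using \cref{COR2} in place of \cref{COR1}.

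I do not expect a genuine obstacle; the only point requiring care is the first step. One must make sure that the geometric stabilizer order \(k_S\), not the rational count \(g_\x(r)\), enters the normalization, since only \(k_S\) scales exactly by \(d\). One must also make sure the support is genuinely preserved under the identification \(\x\leftrightarrow\x'\), which is immediate because the coordinates are literally the same.
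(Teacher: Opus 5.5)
Your proposal is correct and takes the same approach as the paper. The paper's proof likewise combines the drop of the stabilizer from \(\mu_{k_S}\) to \(\mu_{k_S/d}\) (cited there from \cite[Prop.~4.1]{paper-1}) with the factor-\(d\) scaling of \cref{COR1} and \cref{COR2}, so that \(|\Aut(\x)|\) exactly compensates; your direct computation \(\gcd(w_i/d:i\in S)=k_S/d\) and your check that supports are preserved simply spell out that step in more detail.
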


\begin{proof}
Under the reduction \(\w' = \w / d\), the stack \(\cP_{\w}\) is a \(\mu_d\)-gerbe over \(\cP_{\w'}\), and a point of support \(S\) has stabilizer \(\mu_{k_S}\) in the first and \(\mu_{k_S/d}\) in the second \cite[Prop.~4.1]{paper-1}. Thus \(|\Aut(\x')| = |\Aut(\x)|/d\).
The underlying height scales as \(H_{\w',q}(\x')=d\,H_{\w,q}(\x)\) by \cref{COR1} and \cref{COR2}, and the factor \(|\Aut(\x)|\) compensates exactly, yielding \(H_{\w',q}^{\stack}(\x') = H_{\w,q}^{\stack}(\x)\).
\end{proof}

For the probabilistic height the corresponding statement is the following. It identifies \(H^{\prob}_{\w,q}\) as an intrinsic invariant of the groupoid \(\mathfrak{X}(\Fqr)\).

\begin{prop}\label{prop:prob-mass}
Let \(p_r\) be the mass measure on \(\pi_0(\mathfrak{X}(\Fqr))\) normalized to total mass one. For a twist \(\xi\) lying over \(\x\in X(\Fqr)\),
\[
H^{\prob}_{\w,q}(\x) = -\log p_r(\xi) = \log\#^{\stack}(\mathfrak{X}(\Fqr)) + \log|\Aut(\xi)(\Fqr)| .
\]
\end{prop}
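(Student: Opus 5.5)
The proof is a direct comparison of \cref{prop:prob-formula} with the mass computation of \cref{prop:mass}. I will compute \(p_r(\xi)\) explicitly and then match logarithms.

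First, the normalized measure. By \cref{prop:mass}, a twist \(\xi\) has mass \(1/|\Aut(\xi)(\Fqr)|\), and the total mass of \(\pi_0(\mathfrak X(\Fqr))\) is \(\#^{\stack}(\mathfrak X(\Fqr))=|X(\Fqr)|\). Hence
\[
p_r(\xi)=\frac{1}{|\Aut(\xi)(\Fqr)|\,\#^{\stack}(\mathfrak X(\Fqr))}.
\]
Taking \(-\log\) gives the second equality of the statement directly.

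Second, the comparison with \(H^{\prob}_{\w,q}\). By \cref{lem:twists}(ii), \(|\Aut(\xi)(\Fqr)|=g_S(r)=g_{\x}(r)\) for every twist over \(\x\). Combined with \(\#^{\stack}(\mathfrak X(\Fqr))=|X(\Fqr)|\), the right-hand side becomes \(\log|X(\Fqr)|+\log g_\x(r)\). This is exactly \(H^{\prob}_{\w,q}(\x)\) by \cref{prop:prob-formula}.

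An equivalent route works directly from the definition. The mass of \(\xi\) equals \(|\Orb_\w(\tilde\x)|/(q^r-1)\) by \cref{prop:mass}, and the cone has \((q^r-1)|X(\Fqr)|\) nonzero points by \cref{lem:twists}(iii). So \(p_r(\xi)\) is literally the orbit probability appearing in the definition of \(H^{\prob}_{\w,q}\). I would include this one-line remark because it explains why the value is independent of which twist \(\xi\) over \(\x\) is chosen.

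There is no real obstacle. The only point needing care is keeping apart \(|\Aut(\xi)(\Fqr)|=g_\x(r)\), which is the rational automorphism count that enters, and the geometric order \(k_S\), which does not enter.
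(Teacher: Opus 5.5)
Your proof is correct and uses the same ingredients as the paper: the paper's proof is your ``equivalent route'', reading \(p_r(\xi)=|\Orb_\w(\tilde\x)|/|\hat X(\Fqr)\setminus\{0\}|\) off \cref{prop:mass} to get the first equality and then citing \cref{prop:prob-formula} for the second, whereas your main line first obtains the second equality from \(p_r(\xi)=1/\bigl(|\Aut(\xi)(\Fqr)|\,\#^{\stack}(\mathfrak X(\Fqr))\bigr)\) and then matches it with \cref{prop:prob-formula} via \(|\Aut(\xi)(\Fqr)|=g_\x(r)\). One small citation point: the mass \(1/|\Aut(\xi)(\Fqr)|\) of a single twist is the definition of stacky cardinality, not a consequence of \cref{prop:mass}; you need that proposition only for the total mass \(|X(\Fqr)|\).
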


\begin{proof}
By \cref{prop:mass}, \(p_r(\xi)=|\Orb_\w(\tilde\x)|/|\hat X(\Fqr)\setminus\{0\}|\), and the second equality is \cref{prop:prob-formula}.
\end{proof}

\subsection{Stacky Entropy}
Using the stacky height, we stratify the groupoid: for each height level \(m\),
\[
\cH_m^{\stack}(\mathfrak{X}, \Fqr) := \left\{ \xi \in \pi_0(\mathfrak{X}(\Fqr)) \;\middle|\; H_{\w,q}^{\stack}(\x) = m \text{ for the coarse point } \x \text{ of } \xi\right\}.
\]
The stacky height profile is
\[
\mu^{\stack}_{q^r}(m) := \frac{\#^{\stack}(\cH_m^{\stack}(\mathfrak{X}, \Fqr))}{\#^{\stack}(\mathfrak{X}(\Fqr))},
\]
a probability measure over the discrete support of possible \(m\). The \textbf{stack-theoretic entropy} is
\[
\Ent^{\stack}_{q^r}(\mathfrak{X}) := - \sum_m \mu^{\stack}_{q^r}(m) \cdot \log \mu^{\stack}_{q^r}(m).
\]
This is the Shannon entropy of the mass profile after the twists have been grouped by their stack-normalized height.

\begin{prop}\label{prop:stack-entropy-inv}
Let \(\mathfrak{X} \subset \cP_\w\) and \(\mathfrak{X}' \subset \cP_{\w'}\) be the stacks of the same hypersurface, with \(\w' = \w/d\). Then:
\[
\Ent^{\stack}_{q^r}(\mathfrak{X}) = \Ent^{\stack}_{q^r}(\mathfrak{X}').
\]
\end{prop}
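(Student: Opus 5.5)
The plan is to show that the two stacky height profiles \(\mu^{\stack}_{q^r}\) and \(\mu'^{\stack}_{q^r}\) (for \(\w\) and \(\w'\)) are literally the same probability vector up to relabeling of the index \(m\). Entropy depends only on the multiset of probabilities, so this suffices.

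The first step is to identify the underlying point sets. The coarse varieties coincide, so \(X(\Fqr)=X'(\Fqr)\), and corresponding points \(\x\leftrightarrow\x'\) have the same coordinates. By \cref{prop:stack-height-inv}, applied to the degree-based version, \(H^{\stack}_{\w,q}(\x)=H^{\stack}_{\w',q}(\x')\) for every \(\x\in X(\Fqr)\). Since the strata \(\cH_m^{\stack}(\mathfrak X,\Fqr)\) are defined by a condition on the coarse point of a twist, we have
\[
\cH_m^{\stack}(\mathfrak X,\Fqr)=\pi^{-1}\bigl(\{\x\in X(\Fqr): H^{\stack}_{\w,q}(\x)=m\}\bigr),
\]
and similarly for \(\mathfrak X'\) with the same coarse subset. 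Here \(\pi\) is the map from twists to coarse points.

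The second step computes the masses. By \cref{prop:mass}, the twists over each coarse point have total mass \(1\), for both \(\mathfrak X\) and \(\mathfrak X'\). The pushforward of mass to \(X(\Fqr)\) is therefore the counting measure in both cases. Hence
\[
\#^{\stack}\bigl(\cH_m^{\stack}(\mathfrak X,\Fqr)\bigr)=\#\{\x\in X(\Fqr): H^{\stack}_{\w,q}(\x)=m\}=\#^{\stack}\bigl(\cH_m^{\stack}(\mathfrak X',\Fqr)\bigr),
\]
and the denominators agree as well, since both equal \(|X(\Fqr)|\). Therefore \(\mu^{\stack}_{q^r}(m)=\mu'^{\stack}_{q^r}(m)\) for every \(m\), with no relabeling needed at all, and the entropies coincide.

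The only point that needs care, and which I expect to be the main obstacle, is that the numbers of twists and their automorphism orders genuinely differ between \(\mathfrak X\) and \(\mathfrak X'\). They differ by the \(\mu_d\)-gerbe, with \(g\) changing as in \cref{COR3}. One must therefore not argue twist-by-twist. The argument above avoids this by working only with the pushed-forward mass, which is insensitive to the gerbe by \cref{prop:mass}. One also checks that the stratification depends only on the coarse point, so that stratum membership is constant on fibres of \(\pi\); this holds by definition. It is worth remarking that this contrasts with \cref{prop:reduction-entropy}(iii), where the probabilistic strata do see the gerbe.
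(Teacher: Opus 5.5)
Your proposal is correct and follows the paper's own argument. You use \cref{prop:mass} to identify both the mass of each stratum \(\cH_m^{\stack}\) and the total mass with coarse point counts, for \(\mathfrak X\) and \(\mathfrak X'\) alike, and then \cref{prop:stack-height-inv} makes those counts agree, so the two profiles coincide exactly with no relabeling.
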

\begin{proof}
By \cref{prop:mass} the mass of \(\cH_m^{\stack}\) is the number of \(\x\in X(\Fqr)\) with \(H^{\stack}_{\w,q}(\x)=m\), for \(\mathfrak X\) and for \(\mathfrak X'\) alike. The coarse point sets coincide and the stacky heights agree by \cref{prop:stack-height-inv}, so the strata have the same masses,
\[
\#^{\stack}(\cH_m^{\stack}(\mathfrak{X}, \Fqr)) = \#^{\stack}(\cH_m^{\stack}(\mathfrak{X}', \Fqr)),
\]
and similarly for the total mass, yielding identical \(\mu^{\stack}_{q^r}(m)\) and hence equal entropies.
\end{proof}

Note that the stacks \(\mathfrak{X}\) and \(\mathfrak{X}'\) are not isomorphic; the invariance holds because the mass measure forgets the gerbe. The number of twists does not: by contrast, a measure which counts twists with equal weight is sensitive to the reduction, see \cref{prop:gerbe-entropy} below.

\subsection{Stacky Zeta Functions and Entropy Bounds}
Define the \textbf{stacky zeta function} of \(\mathfrak{X}\) by
\[
Z^{\stack}(\mathfrak{X}, t) := \exp\left( \sum_{r=1}^\infty \#^{\stack}(\mathfrak{X}(\F_{q^r})) \frac{t^r}{r} \right).
\]
This is the zeta function adapted to the groupoid cardinality, counting with orbifold weights.

\begin{thm}\label{thm:stack-zeta}
For a weighted hypersurface \(\mathfrak{X} \subset \cP_\w\) over \(\Fq\) with coarse space \(X\),
\[
Z^{\stack}(\mathfrak{X}, t) = Z(X,t).
\]
In particular \(Z^{\stack}(\mathfrak{X}, t)\) is rational and unchanged under reduction of the weights. Each reciprocal zero or pole is a Frobenius eigenvalue on some \(H^i_c(X_{\overline{\F}_q},\Ql)\), hence a \(q\)-Weil integer of some integer weight \(j\) with \(0\le j\le i\le 2(n-1)\): an algebraic integer all of whose complex conjugates have absolute value \(q^{j/2}\). Since \(X\) may be singular the cohomology is mixed, and \(j<i\) can occur.
\end{thm}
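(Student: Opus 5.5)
The plan is to reduce the statement to \cref{prop:mass} and then quote the Grothendieck--Lefschetz trace formula for the coarse space. First, by \cref{prop:mass}, for every \(r\ge1\) we have \(\#^{\stack}(\mathfrak X(\Fqr))=|X(\Fqr)|\). This holds because the \(g_\x(r)\) twists over each coarse point each carry automorphism group of order \(g_\x(r)\) (\cref{lem:twists}(ii)), so the mass of each fibre is \(g_\x(r)\cdot g_\x(r)^{-1}=1\). Substituting this equality term by term into the defining series gives
\[
Z^{\stack}(\mathfrak X,t)=\exp\Bigl(\sum_{r\ge1}|X(\Fqr)|\frac{t^r}{r}\Bigr)=Z(X,t)
\]
as formal power series. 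Invariance under reduction is then immediate: \(X\) and \(X'\) are the same coarse variety, since \(\bP^n_\w\iso\bP^n_{\w/d}\) and the defining equation is unchanged. So the two coarse counts agree for every \(r\), and so do the two zeta functions.

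Rationality and the description of zeros and poles are statements about \(Z(X,t)\) for the separated, finite-type \(\Fq\)-scheme \(X\), which has dimension \(n-1\). I would invoke the Grothendieck--Lefschetz trace formula for compactly supported \(\ell\)-adic cohomology, with \(\ell\ne p\):
\[
|X(\Fqr)|=\sum_i(-1)^i\operatorname{tr}\bigl(\mathrm{Frob}^r\mid H^i_c(X_{\overline{\F}_q},\Ql)\bigr).
\]
This gives
\[
Z(X,t)=\prod_{i=0}^{2(n-1)}\det\bigl(1-\mathrm{Frob}\,t\mid H^i_c\bigr)^{(-1)^{i+1}},
\]
which is rational with reciprocal zeros and poles among the Frobenius eigenvalues on the \(H^i_c\). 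The vanishing of \(H^i_c\) for \(i>2\dim X\) gives the bound \(i\le 2(n-1)\). Note that \(X\) is a scheme here; the only stack input was the mass identity.

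For the weights I would cite Deligne's Weil II: \(H^i_c\) of any variety over \(\Fq\) is mixed of weights \(\le i\), and Frobenius eigenvalues are algebraic integers. Combined with the integrality of Frobenius eigenvalues on \(H^i_c\), this shows that each eigenvalue is a \(q\)-Weil integer of some integer weight \(j\), with \(0\le j\le i\); the lower bound \(j\ge0\) follows from integrality. Because \(X\) may be singular, purity fails and \(j<i\) is possible. A cone point of a weighted hypersurface with a non-quotient singularity is an example of this. Cancellation between adjacent degrees means that only a subset of eigenvalues survives in the rational function, but every surviving zero or pole is still such an eigenvalue, which is all that is claimed.

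The main obstacle is not the stacky step, which is the one-line \cref{prop:mass}. It is making sure the cohomological citations apply to a possibly singular, possibly reducible coarse space: one needs Deligne's mixedness theorem for \(H^i_c\) rather than the Weil conjectures for smooth projective varieties. I would state this explicitly, and also note that weights are integers (\emph{loc.\ cit.}), rather than attempt any direct argument.
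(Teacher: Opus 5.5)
Your proposal is correct and is essentially the paper's proof: \cref{prop:mass} gives the equality term by term, the coincidence of the coarse spaces gives invariance under reduction, and the trace formula together with Deligne's theorem gives the description of the reciprocal zeros and poles (you read rationality off the trace formula instead of also citing Dwork, which makes no difference). One caution concerns your illustration of \(j<i\), which is not a valid example: a non-quotient singularity does not by itself force impurity, since the projective cone over a smooth plane cubic has a non-quotient vertex yet pure cohomology, whereas a nodal plane cubic (a hypersurface with \(\w=(1,1,1)\)), whose \(H^1\) has weight \(0\), is a correct example.
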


\begin{proof}
The equality is \cref{prop:mass}. The coarse space \(X\) is a projective variety over \(\Fq\), so \(Z(X,t)\) is rational by Dwork's theorem \cite{dwork1960}, by the Grothendieck--Lefschetz trace formula its reciprocal zeros and poles are Frobenius eigenvalues on the groups \(H^i_c(X_{\overline{\F}_q},\Ql)\), and by Deligne's theorem \cite{deligne-weil2} each eigenvalue on \(H^i_c\) is an algebraic integer which is pure of some integer weight \(j\le i\). The coarse varieties of \(\w\) and \(\w/d\) coincide.
\end{proof}

\begin{rem}
When \(p\nmid w_i\) for all \(i\), the mass identity can also be read cohomologically from the trace formula for tame Deligne--Mumford stacks \cite{Behrend}.  The twist zeta function is different: it counts isomorphism classes without inverse-automorphism weights.  Its functional equation is treated in \cref{subsec:fe}.
\end{rem}

\begin{prop}
The stacky entropy satisfies \(0 \leq \Ent^{\stack}_{q^r}(\mathfrak{X}) \leq \log |\Supp(\mu^{\stack}_{q^r})|\), with equality on the left for a single stratum and on the right for the uniform profile.
\end{prop}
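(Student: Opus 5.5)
The claim reduces to the standard entropy bounds for a probability mass function with finite support, so the plan is to check that \(\mu^{\stack}_{q^r}\) is such a function and then apply Gibbs' inequality.

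First I will confirm that \(\mu^{\stack}_{q^r}\) is a probability vector with finite support. Each twist \(\xi\in\pi_0(\mathfrak X(\Fqr))\) lies in exactly one stratum \(\cH^{\stack}_m\), namely the one with \(m=H^{\stack}_{\w,q}(\x)\) for its coarse point \(\x\). Hence the strata partition \(\pi_0(\mathfrak X(\Fqr))\), and the masses add up to \(\#^{\stack}(\mathfrak X(\Fqr))\). By \cref{prop:mass} this total equals \(|X(\Fqr)|\), which is positive under the standing assumption \(X(\Fqr)\neq\varnothing\), so the normalization is legitimate. The set \(\pi_0(\mathfrak X(\Fqr))\) is finite by \cref{lem:twists}(iii), so the support is finite. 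Every weight \(\mu^{\stack}_{q^r}(m)\) lies in \((0,1]\) on the support.

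For the lower bound, each term \(-\mu\log\mu\) is nonnegative when \(0<\mu\le1\). The sum vanishes exactly when every \(\mu\) in the support equals \(1\). Since the weights sum to \(1\), this means the support is a single point, that is, there is a single stratum.

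For the upper bound, let \(N=|\Supp(\mu^{\stack}_{q^r})|\). I will apply Jensen's inequality to the concave function \(\log\), taking the average over the support with respect to \(\mu\):
\[
\Ent^{\stack}_{q^r}(\mathfrak X)=\sum_m\mu(m)\log\frac{1}{\mu(m)}\le\log\sum_m\mu(m)\cdot\frac{1}{\mu(m)}=\log N.
\]
Because \(\log\) is strictly concave, equality holds exactly when \(1/\mu(m)\) is constant on the support. This means \(\mu(m)=1/N\) for every \(m\), which is the uniform profile.

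None of these steps is a real obstacle. The only point needing care is the well-definedness of the normalization, and \cref{prop:mass} already supplies it.
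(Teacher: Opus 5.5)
Your proposal is correct and follows the same route as the paper, which simply cites the standard entropy bounds for a probability distribution on a finite set. You fill in the details the paper leaves implicit: that \(\mu^{\stack}_{q^r}\) is a normalized finite-support distribution (using \cref{prop:mass}), that each term \(-\mu\log\mu\) is nonnegative, and Jensen's inequality with its strict-concavity equality case.
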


\begin{proof}
This is the standard entropy bound for a probability distribution on a finite set. Equality on the right occurs precisely for the uniform distribution on its support.
\end{proof}

When \(p\nmid w_i\), the fixed loci that determine the isotropy counts are the coarse spaces of the components of the inertia stack. For \(\e\in\overline{\F}_q^\times\) put
\[
X^{\e} := \{\x\in X \mid \e^{w_i}=1 \text{ for all } i\in S(\x)\},
\]
a closed subvariety of \(X\) cut out by coordinate hyperplanes, which is defined over \(\Fqr\) when \(\e\in\Fqr^\times\). When \(p\nmid w_i\) for all \(i\), the inertia stack of \(\mathfrak X\) is the disjoint union of the substacks over the \(X^\e\), which are then its twisted sectors. The following proposition is a statement about point counts and needs no such hypothesis.

\begin{prop}\label{prop:inertia}
The isotropy stratification of \(X(\Fqr)\), the profile \(\mu^{\prob}_{q^r}\), and the entropy \(\Ent^{\prob}_{q^r}(X)\) are determined by the numbers \(|X^{\e}(\Fqr)|\), \(\e\in\Fqr^\times\), together with the inclusions among the \(X^\e\). Moreover
\[
\#\pi_0(\mathfrak{X}(\Fqr)) = \sum_{\e\in\Fqr^\times}|X^{\e}(\Fqr)|,
\qquad
\Sigma_r = \bigcup_{\e\in\Fqr^\times,\ \e^{d_X}\neq1} X^{\e},
\]
where \(\Sigma_r\) is the locus of \cref{thm:prob-asymptotics}.
\end{prop}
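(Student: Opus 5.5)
The whole proposition rests on one pointwise identity. For \(\x\in X(\Fqr)\) with support \(S\), the set \(\{\e\in\Fqr^\times : \x\in X^\e\}\) is exactly \(\{\e\in\Fqr^\times : \e^{w_i}=1 \ \forall i\in S\}\). Since \(\Fqr^\times\) is cyclic of order \(q^r-1\), the condition \(\e^{w_i}=1\) for all \(i\in S\) is equivalent to \(\e^{k_S}=1\). So this set is \(\mu_{k_S}(\Fqr)\), a cyclic group of order \(g_S(r)=g_\x(r)\).

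\textbf{First formula.} I would write \(\sum_{\e\in\Fqr^\times}|X^\e(\Fqr)|\) as a double count \(\sum_{\x}\#\{\e : \x\in X^\e\}\). By the identity above this equals \(\sum_{\x}g_{\x}(r)\), which is \(\#\pi_0(\mathfrak X(\Fqr))\) by \cref{lem:twists}(iii). Here one should note that \(X^\e\) is cut out by the vanishing of the coordinates \(x_i\) with \(\e^{w_i}\ne1\). It is therefore defined over \(\Fq\) as a closed subscheme, and its \(\Fqr\)-points are the points of \(X(\Fqr)\) satisfying the support condition.

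\textbf{Determination of the stratification.} For fixed \(\x\), I would recover \(g_\x(r)\) from the \(X^\e\) as the number of \(\e\in\Fqr^\times\) with \(\x\in X^\e\). Equivalently, \(g_\x(r)\) is the largest order of an \(\e\) with \(\x\in X^\e\), since the set of such \(\e\) is the cyclic group \(\mu_{g_\x(r)}\). In particular, the stratum of points with \(g_\x(r)=g\) is
\[
X^{\e_g}(\Fqr)\setminus\bigcup_{g\mid g',\,g'\ne g}X^{\e_{g'}}(\Fqr),
\]
where \(\e_g\) denotes a generator of \(\mu_g(\Fqr)\). Note that \(X^\e\) depends only on the subgroup generated by \(\e\), so \(X^{\e}\subseteq X^{\e'}\) whenever \(\langle\e'\rangle\subseteq\langle\e\rangle\).

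The stratum sizes then follow by Möbius inversion over the lattice of divisors \(g\) of \(q^r-1\). This uses the numbers \(|X^{\e_g}(\Fqr)|\) together with the inclusions, which determine the sizes of the intersections: \(X^{\e_g}\cap X^{\e_{g'}}=X^{\e_{\lcm(g,g')}}\), and the inclusions identify which loci coincide. Consequently \(\mu^{\prob}_{q^r}\) and \(\Ent^{\prob}_{q^r}(X)\), which by \cref{prop:prob-formula} depend only on these stratum sizes, are determined as well.

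\textbf{Formula for \(\Sigma_r\).} Every support occurring on \(X\) has \(d_X\mid k_S\), so \(\gcd(d_X,q^r-1)\mid g_\x(r)\). Thus \(\x\in\Sigma_r\) exactly when \(\mu_{g_\x(r)}(\Fqr)\) strictly contains \(\mu_{\gcd(d_X,q^r-1)}(\Fqr)\). This happens exactly when some \(\e\) with \(\x\in X^\e\) satisfies \(\e^{d_X}\ne1\), because \(\mu_{d_X}(\Fqr)=\mu_{\gcd(d_X,q^r-1)}\). This gives the stated union.

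The only delicate step is the Möbius or inclusion–exclusion bookkeeping, which I expect to be the main obstacle. I would handle it by indexing the loci by subgroups of \(\Fqr^\times\) rather than by individual elements \(\e\).
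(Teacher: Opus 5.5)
Your proposal is correct and follows essentially the same route as the paper: the pointwise identity \(\{\e\in\Fqr^\times:\x\in X^\e\}=\mu_{k_{S(\x)}}(\Fqr)\), double counting against \cref{lem:twists}(iii) for the number of twists, inclusion--exclusion for the strata, and the containment \(\mu_{d_X}(\Fqr)\subseteq\mu_{k_{S(\x)}}(\Fqr)\) for \(\Sigma_r\). Your explicit M\"obius inversion, based on \(|X^{\e_g}(\Fqr)|=\#\{\x\in X(\Fqr): g\mid g_\x(r)\}\), spells out the paper's one-line appeal to inclusion--exclusion, and it also shows that the stratum sizes need only the counts, not the inclusions.
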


\begin{proof}
For \(\x\in X(\Fqr)\) the set \(\{\e\in\Fqr^\times : \x\in X^\e\}\) is \(\mu_{k_{S(\x)}}(\Fqr)\), of order \(g_\x(r)\). Hence \(g_\x(r)\) is recovered from the \(X^\e\) containing \(\x\), and the number of points with a given isotropy is obtained from the \(|X^\e(\Fqr)|\) by inclusion--exclusion. Summing \(g_\x(r)\) over \(\x\) gives the first formula by \cref{lem:twists}; this is the Burnside form of the orbit count \cite[Lem.~3.1]{paper-1}. Finally \(g_\x(r)\neq\gcd(d_X,q^r-1)\) if and only if \(\mu_{k_{S(\x)}}(\Fqr)\) contains an element outside \(\mu_{d_X}\).
\end{proof}

Thus the entropy of the isotropy stratification is an invariant of the rational isotropy loci---of the rational twisted sectors of the inertia stack when \(p\nmid w_i\)---and by \cref{thm:prob-asymptotics} its leading asymptotic is governed by the top-dimensional components of the non-generic ones.

\begin{exa}
For \(\cP(1,2)\) and odd \(q\), the inertia stack is \(\cP(1,2)\sqcup B\mu_2\), with the second component over the stacky point and corresponding to \(\e=-1\). Accordingly
\[
|X^{1}(\Fq)|+|X^{-1}(\Fq)|=(q+1)+1=q+2,
\]
which is the number of twists, and \(\Ent^{\prob}_{q}=h(1/(q+1))\) as in \cref{exa:prob}.
\end{exa}

The Kummer groups appearing here are the finite-field counterparts of the root-of-unity torsors considered in \cite{sh-94}.  Over \(\F_{q^r}\) their cardinalities are the integers \(g_S(r)\), and these cardinalities enter the twist count directly.

Two entropies are now attached to the mass measure, and they should not be confused. The stacky entropy \(\Ent^{\stack}_{q^r}\) above is the entropy of the mass measure \emph{after grouping the twists into height strata}; it is small and invariant under reduction. The following is the entropy of the mass measure on the set of twists itself, with no grouping; it is of size \(\log|X(\Fqr)|\) and records the gerbe.

\begin{defn}
The \textbf{groupoid entropy}, or canonical entropy, of the groupoid \(\mathfrak{X}(\Fqr)\) is the Shannon entropy of the normalized mass measure \(p_r\) on the set of twists,
\[
\Ent^{\can}_{q^r}(\mathfrak{X}) := -\sum_{\xi\in\pi_0(\mathfrak{X}(\Fqr))} p_r(\xi)\log p_r(\xi).
\]
\end{defn}

\begin{prop}\label{prop:gerbe-entropy}
Let \(\mathfrak{X}\) be the stack of \(X \subset \bP_\w^n\). Then
\[
\Ent^{\can}_{q^r}(\mathfrak{X}) = \log |X(\Fqr)| + \frac{1}{|X(\Fqr)|}\sum_{\x\in X(\Fqr)}\log g_\x(r),
\]
the mean of the probabilistic height. If \(\w'=\w/d\) and \(\mathfrak{X}'\subset\cP_{\w'}\), then
\[
0\;\le\;\Ent^{\can}_{q^r}(\mathfrak{X})-\Ent^{\can}_{q^r}(\mathfrak{X}')\;\le\;\log \gcd(d, q^r-1),
\]
with equality on the right when \(\gcd(d,k_S/d)=1\) for every support \(S\) occurring on \(X(\Fqr)\), where the log term captures the entropy of the \(\mu_d\)-gerbe.
\end{prop}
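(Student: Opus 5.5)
The plan is to compute \(\Ent^{\can}_{q^r}(\mathfrak X)\) directly from \cref{prop:prob-mass} and then compare the stacks \(\mathfrak X\) and \(\mathfrak X'\) pointwise with \cref{COR3}.

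First, by \cref{prop:prob-mass}, each twist \(\xi\) over \(\x\in X(\Fqr)\) has \(-\log p_r(\xi)=H^{\prob}_{\w,q}(\x)=\log|X(\Fqr)|+\log g_\x(r)\). By \cref{prop:mass} we have \(p_r(\xi)=1/(g_\x(r)|X(\Fqr)|)\), and there are \(g_\x(r)\) twists over \(\x\) (\cref{lem:twists}), so the fibre over \(\x\) carries total normalized mass \(1/|X(\Fqr)|\). Grouping the defining sum by coarse point gives
\[
\Ent^{\can}_{q^r}(\mathfrak X)=\sum_{\x\in X(\Fqr)}\frac{1}{|X(\Fqr)|}\bigl(\log|X(\Fqr)|+\log g_\x(r)\bigr),
\]
which is the stated formula. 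This also exhibits the entropy as the expectation of \(H^{\prob}_{\w,q}\) under the pushforward of \(p_r\), which is the uniform measure on \(X(\Fqr)\).

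Second, for the comparison I use that \(X\) and \(X'\) have the same coarse points, so \(|X(\Fqr)|=|X'(\Fqr)|\) and supports correspond. Subtracting the two formulas, the \(\log|X(\Fqr)|\) terms cancel and the difference becomes the average over \(\x\in X(\Fqr)\) of
\[
\log\frac{\gcd(k_{S(\x)},q^r-1)}{\gcd(k_{S(\x)}/d,q^r-1)}.
\]
By \cref{COR3} each term lies in \([0,\log\gcd(d,q^r-1)]\), so the average does too. If \(\gcd(d,k_S/d)=1\) for every support occurring, \cref{COR3} says every term equals \(\log\gcd(d,q^r-1)\), and hence so does the average. No step here is a real obstacle; the only point needing care is the fibrewise grouping, specifically that the masses of the twists over \(\x\) sum to \(1/|X(\Fqr)|\) after normalization, and \cref{prop:mass} gives exactly this.
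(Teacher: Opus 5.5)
Your proposal is correct and follows essentially the same route as the paper: you group the entropy sum over the \(g_\x(r)\) twists above each coarse point, each of normalized mass \(1/(g_\x(r)|X(\Fqr)|)\), and then obtain the comparison with \(\mathfrak X'\) by averaging \cref{COR3} over \(X(\Fqr)\).
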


\begin{proof}
Over \(\x\) there are \(g_\x(r)\) twists, each with \(p_r(\xi)=1/(g_\x(r)|X(\Fqr)|)\) by \cref{prop:prob-mass}; their contribution to the entropy is \(|X(\Fqr)|^{-1}\bigl(\log|X(\Fqr)|+\log g_\x(r)\bigr)\). The comparison is the average of \cref{COR3} over \(X(\Fqr)\).
\end{proof}

\begin{rem}
Let \(\nu_r\) be the uniform probability measure on the set of twists and \(A_X(q^r)=\#\pi_0(\mathfrak{X}(\Fqr))\). The same computation gives the relative entropy
\[
D(\nu_r\,\|\,p_r) = \mathbb{E}_{\nu_r}\bigl[\log|\Aut(\xi)(\Fqr)|\bigr]-\log\frac{A_X(q^r)}{|X(\Fqr)|},
\]
which vanishes exactly when all points of \(X(\Fqr)\) have the same number of twists. By \cref{thm:prob-asymptotics} the mean in \cref{prop:gerbe-entropy} is \(\log\gcd(d_X,q^r-1)+O(q^{-rc_r})\), so \(\Ent^{\can}_{q^r}(\mathfrak X)-\log|X(\Fqr)|\) is asymptotically periodic in \(r\).
\end{rem}

The formulas above will be used only through their consequences for the isotropy-weighted point counts in \cref{sec:9}.

%***********************************************
\section{Comparison with stack heights}\label{sec:6}
We compare the function-field height of \cref{sec:3} with the height on algebraic stacks defined by Ellenberg, Satriano, and Zureick-Brown \cite{ellenberg}.  The comparison is made on the ambient weighted projective stack; for a hypersurface we restrict the ambient height along the closed immersion.

Let \(K = \F_q(t)\) be the function field of \(C=\bP^1\) over \(\F_q\), a global field. Consider the weighted projective stack \(\cP_{\w,K} = [(\bA^{n+1}_K \setminus \{0\}) / \G_{m,K}]\), and the hypersurface stack \(\mathfrak{X}_K\), the substack defined by the zero locus of \(f\). Both are defined over \(\Fq\), hence extend to the constant families \(\cP_\w\times C\) and \(\mathfrak X\times C\) over \(C\). The ambient family \(\cP_\w\times C\) is normal, proper over \(C\) and has finite diagonal, as required in \cite[Sec.~2]{ellenberg}. An arbitrary weighted hypersurface need not be normal, so we do not apply that theory to \(\mathfrak X\) directly: we define the heights of \(\mathfrak X_K\) by restriction along the closed immersion \(\iota\colon\mathfrak X\hookrightarrow\cP_\w\), setting
\[
H^{\can}_{\Oo(k)}(x) := H^{\can}_{\Oo_{\cP_\w}(k)}(\iota\circ x),
\qquad
H^{\can,st}_{\Oo(k)}(x) := H^{\can,st}_{\Oo_{\cP_\w}(k)}(\iota\circ x),
\]
which is the value given by functoriality \cite[Rem.~2.16]{ellenberg} whenever \(\mathfrak X\) is itself normal. All statements below are for these heights.

By \cref{lem:lifting}(i), an object \(x\) of \(\mathfrak{X}_K(K)\) is a \(K^\times\)-orbit on \(\hat X(K)\setminus\{0\}\); we write \(\tilde\x\) for a vector in this orbit and \(\x\in X_K(K)\) for its coarse point. By \cref{lem:lifting}(iii) the map \(x\mapsto\x\) is surjective with fibres the torsors under \(K^\times/(K^\times)^{k_S}\); the fibre may be infinite, and is infinite precisely when \(k_S>1\). The height of \cite{ellenberg} is a function of the object \(x\); the function-field height of \cref{sec:3} is a function of the coarse point \(\x\). Let \(\cV\) be a vector bundle on \(\mathfrak{X}\), such as a power of the tautological bundle \(\Oo(1)\).

\begin{defn}\label{def:can-height}
For \(x \in \mathfrak{X}_K(K)\) and a vector bundle \(\cV\), the \textbf{height} and the \textbf{stable height} of \cite[Def.~2.11, Def.~2.12]{ellenberg} are
\[
H^{\can}_{\cV}(x) = -\deg(\pi_* \bar x^* \cV^\vee),
\qquad
H^{\can,st}_{\cV}(x) = -\deg_{\cC}(\bar x^* \cV^\vee),
\]
where \((\cC, \bar x, \pi)\) is a tuning stack for \(x\): \(\cC\) is a normal Artin stack with finite diagonal, \(\pi: \cC \to C\) is a birational coarse space map, and \(\bar x: \cC \to \mathfrak{X}\times C\) extends the given \(x: \spec K \to \mathfrak{X}_K\).
\end{defn}

\noindent\textbf{Normalizations.} We follow \cite[Sec.~2.2]{ellenberg}: the degree of a divisor \(\sum n_PP\) on \(C\) is \(\sum n_P\log|\kappa(P)|\), so degrees on \(C\) lie in \((\log q)\Z\), degrees on \(\cC\) lie in \((\log q)\Q\), and \(q_v=q^{\deg v}\), \(|a|_v=q_v^{-\ord_v(a)}\) as in \cref{sec:3}. With these conventions no factor of \(\log q\) intervenes in the comparisons below.

Both heights are independent of the choice of tuning stack \cite[Prop.~2.13]{ellenberg}. They differ by a sum of local discrepancies supported at the stacky points of \(\cC\), \(H^{\can}_\cV=H^{\can,st}_{\cV}+\sum_v\delta_{\cV;v}\) \cite[(2.22)]{ellenberg}; the pushforward \(\pi_*\) is what makes the height non-additive in \(\cV\), whereas the stable height of a line bundle is additive, \(H^{\can,st}_{\cL^{\otimes k}}=k\,H^{\can,st}_{\cL}\), because pullback to \(\cC\) commutes with tensor powers.

For a vector \(\tilde\x\in K^{n+1}\setminus\{0\}\) and a place \(v\) put
\[
a_v(\tilde\x) := \max_{0\le i\le n}\frac{\log_{q_v}|\tilde x_i|_v}{w_i} = -\min_{i\in S}\frac{\ord_v(\tilde x_i)}{w_i}\ \in\ \tfrac1m\Z,\qquad m=\lcm(\w),
\]
which vanishes for all but finitely many \(v\), so that \(H^{\ff}_{\w,q}(\x)=\sum_va_v(\tilde\x)\log q_v\).

\begin{prop}[Specialization to Function-Field Height]\label{prop:ff-spec}
Let \(x\in\mathfrak{X}_K(K)\) be the orbit of \(\tilde\x\), with coarse point \(\x\). Then:
\begin{enumerate}[label=(\roman*)]
\item \(H^{\can,st}_{\Oo(1)}(x) = H_{\w,q}^{\ff}(\x)\), and \(H^{\can}_{\Oo(m)}(x)=m\,H_{\w,q}^{\ff}(\x)\);
\item \(H^{\can}_{\Oo(1)}(x) = \sum_{v}\bigl\lceil a_v(\tilde\x)\bigr\rceil\log q_v\);
\item the local discrepancy at \(v\) is \(\delta_{\Oo(1);v}(x)=\bigl(\lceil a_v(\tilde\x)\rceil-a_v(\tilde\x)\bigr)\log q_v\), which lies in \([0,\log q_v)\) and, when nonzero, is at least \(\frac1m\log q_v\).
\end{enumerate}
The stable height depends only on the coarse point \(\x\); the height \(H^{\can}_{\Oo(1)}\) depends on the twist \(x\).
\end{prop}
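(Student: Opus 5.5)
Since both the height and the stable height are independent of the choice of tuning stack, I will compute them place by place on an explicit tuning stack built from the valuations of a representative. Fix a representative \(\tilde\x\in K^{n+1}\setminus\{0\}\) of the orbit \(x\). The map \(\spec K\to\cP_\w\times C\) extends over the open set of places with \(a_v(\tilde\x)=0\) and all \(\tilde x_i\) \(v\)-integral with a unit among those in the support. At the finitely many remaining places the extension is not forced, so we localize. At a place \(v\) with uniformizer \(\pi_v\), write \(a_v=a_v(\tilde\x)=b/c\) in lowest terms with \(c\mid m\). Rescaling by \(\lambda=\pi_v^{a_v}\) requires adjoining a \(c\)-th root of \(\pi_v\). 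So near \(v\) we take the root stack \(\cC=\sqrt[c]{v/C}\), i.e. locally \([\spec\Oo_v[u]/(u^c-\pi_v)\,/\,\mu_c]\). On it the vector \((u^{bw_i}\tilde x_i)_i\) has all coordinates integral and at least one a unit: we have \(\ord_u(u^{bw_i}\tilde x_i)=c\,w_i(\ord_v\tilde x_i/w_i-a_v)\ge0\), with equality at the index realizing the minimum. The \(\mu_c\)-action is compatible with the weighted \(\G_m\)-action through \(\zeta\mapsto\zeta^b\). This gives a representable extension \(\bar x\colon\cC\to\cP_\w\times C\), hence a tuning stack. Since the heights of \(\mathfrak X\) are defined by restriction along \(\iota\), no normality of \(\mathfrak X\) is needed.

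Next I compute the degrees. The pullback \(\bar x^*\Oo(1)\) is the line bundle attached to the \(\G_m\)-torsor given by this rescaled trivialization. Locally at \(v\) it is \(\Oo_\cC(-b\cdot[u=0])\) up to the generic trivialization, twisted by a global character. The stable degree of \(\bar x^*\Oo(1)^\vee\), with the normalization \(\deg[u=0]=\tfrac1c\log q_v\), is therefore \(-\sum_v a_v\log q_v\). This gives \(H^{\can,st}_{\Oo(1)}(x)=\sum_va_v\log q_v=H^{\ff}_{\w,q}(\x)\). Equivalently, I will check the identity globally: the torsor's degree is the sum of local orders of the rescaling factors, and the product formula shows that the answer is independent of \(\tilde\x\) in its \(K^\times\)-orbit and even across twists. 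This also gives the final assertion that the stable height depends only on \(\x\).

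For (ii) I compute \(\pi_*\) of a line bundle on a root stack \(\sqrt[c]{v}\). The pushforward of \(\Oo_\cC(-b[u=0])\) is \(\Oo_C(-\lceil b/c\rceil v)\), since the \(\mu_c\)-invariant sections are those of \(u\)-order at least \(b\) that are powers of \(u^c\). Hence \(H^{\can}_{\Oo(1)}(x)=\sum_v\lceil a_v\rceil\log q_v\). Here the sign and the dualization convention of \cite[Def.~2.11]{ellenberg} must be tracked carefully; I expect this bookkeeping to be the main obstacle. I will anchor it with the check \(\w=(1,\dots,1)\), where \(a_v\in\Z\) and both heights reduce to the Weil height. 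Part (iii) then follows by subtraction from \cite[(2.22)]{ellenberg}. Since \(a_v\in\frac1m\Z\), the discrepancy \(\lceil a_v\rceil-a_v\) lies in \([0,1)\), and when nonzero it is at least \(1/m\).

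Finally, \(H^{\can}_{\Oo(m)}(x)=mH^{\ff}_{\w,q}(\x)\) holds because \(m a_v\in\Z\). So \(\bar x^*\Oo(m)\) descends to \(C\) with no rounding, and \(H^{\can}\) coincides with \(H^{\can,st}=m\,H^{\can,st}_{\Oo(1)}\) by additivity of the stable height. The dependence of \(H^{\can}_{\Oo(1)}\) on the twist is visible in (ii): replacing \(\tilde\x\) by \((a^{w_i/k_S}\tilde x_i)\) shifts \(a_v\) by \(-\ord_v(a)/k_S\). This changes the fractional parts, hence the ceilings, while leaving \(\sum_v a_v\log q_v\) fixed.
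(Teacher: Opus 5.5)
Your argument is correct, but it takes a different route from the paper's. The paper never writes down a tuning stack.

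\begin{itemize}
\item \emph{For (ii):} it observes that the sections \(x_i^{m/w_i}\) generate \(\Oo(m)\) on the whole model \(\cP_\w\times C\), and that \(K\) has no archimedean places. The exact case of \cite[Prop.~2.28]{ellenberg} then gives the ceiling formula directly.
\item \emph{For (i):} it uses \(\phi_m^*\Oo_{\bP^n}(1)=\Oo(m)\), the equality of height and stable height for bundles pulled back from a scheme \cite[Prop.~2.15]{ellenberg}, and additivity of the stable height. This reduces everything to the Weil height of \(\phi_m(\x)\), which equals \(m\,H^{\ff}_{\w,q}(\x)\) by the lemma of \cref{sec:3}.
\end{itemize}

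You instead build an explicit tuning stack from root stacks of order equal to the denominator of \(a_v\), and compute \(\deg_{\cC}\) and \(\pi_*\) by hand. In effect you reprove the special case of \cite[Prop.~2.28]{ellenberg} that is needed. This makes your argument self-contained, and it shows exactly where the rounding comes from: the \(\mu_c\)-invariant sections on the root stack. It also gives each local discrepancy \(\delta_{\Oo(1);v}\) from a computation at \(v\) itself, whereas the paper reads the local identity off a subtraction of global totals. The price is the routine check that \(\cC\) is a tuning stack (normal, finite diagonal, birational coarse space map), plus the sign bookkeeping you flagged.

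That bookkeeping has two slips in your draft:
\begin{itemize}
\item Since \(a_v=-\min_i\ord_v(\tilde x_i)/w_i\), your display should read \(\ord_u(u^{bw_i}\tilde x_i)=c\,w_i(\ord_v\tilde x_i/w_i+a_v)\), not with \(-a_v\).
\item The generic frame of \(\bar x^*\Oo(1)\) determined by \(\tilde\x\) is \(u^{b}\) times the local frame determined by \((u^{bw_i}\tilde x_i)_i\). Hence \(\bar x^*\Oo(1)\simeq\Oo_{\cC}(b\cdot[u=0])\) locally, and \(\Oo_{\cC}(-b\cdot[u=0])\) is its dual.
\end{itemize}
The dual is the bundle whose degree and pushforward you actually compute. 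With that reading, your values \(\deg_{\cC}\bar x^*\Oo(1)^\vee=-\sum_v a_v\log q_v\) and \(\pi_*\Oo_{\cC}(-b\cdot[u=0])=\Oo_C(-\lceil b/c\rceil v)\) are right. Fixing this matters, because pushing forward the wrong one of the two would replace \(\lceil a_v\rceil\) by \(-\lfloor a_v\rfloor\).
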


\begin{proof}
Write \(\cL=\Oo(1)\). The coordinate \(x_i\) is a section of \(\cL^{w_i}\), so the \(n+1\) sections \(s_i=x_i^{m/w_i}\) of \(\cL^{m}\) have no common zero on \(\bA^{n+1}\setminus\{0\}\); equivalently the map \(\Oo_{\cP_\w}^{\oplus(n+1)}\to\cL^m\) they define is surjective. Hence \(\cL^m\) is globally generated by \(s_0,\dots,s_n\) on the whole model \(\cP_\w\times C\), and not merely generically globally generated in the sense of \cite[Sec.~2.4]{ellenberg}; this is the situation of \cite[Sec.~3.3]{ellenberg}. Since \(K\) is a function field, \(C\) has no Archimedean places. Both hypotheses of the second, exact assertion of \cite[Prop.~2.28]{ellenberg} therefore hold, so that proposition applies with no bounded error term \(E(x)\). Choosing the identification of \(x^*\cL\) with \(K\) given by \(\tilde\x\), the pullbacks of the \(s_i\) are \(\tilde x_i^{m/w_i}\), and \cite[Prop.~2.28]{ellenberg}, applied to the power \(\cL^m\), gives
\[
H^{\can}_{\cL}(x)=\sum_v\Bigl\lceil\frac1m\log_{q_v}\max_i\bigl|\tilde x_i^{\,m/w_i}\bigr|_v\Bigr\rceil\log q_v=\sum_v\lceil a_v(\tilde\x)\rceil\log q_v,
\]
which is (ii); this is \cite[(3.5)]{ellenberg} for the weights \(\w\).

The Veronese morphism \(\phi_m\colon\cP_\w\to\bP^n\) satisfies \(\phi_m^*\Oo_{\bP^n}(1)=\cL^m\).  Since a bundle pulled back from a scheme has equal height and stable height \cite[Prop.~2.15]{ellenberg}, and stable height is additive in tensor powers,
\[
m H^{\can,st}_{\cL}(x)
 =H^{\can,st}_{\cL^m}(x)
 =H^{\can}_{\cL^m}(x)
 =h(\phi_m(\x)).
\]
By \cref{sec:3}, \(h(\phi_m(\x))=mH^{\ff}_{\w,q}(\x)\), proving the first identity in (i); the same display gives the second.  Subtracting this stable height from the ceiling formula in (ii) gives
\[
\delta_{\cL;v}(x)=\bigl(\lceil a_v(\tilde\x)\rceil-a_v(\tilde\x)\bigr)\log q_v.
\]
Since \(a_v(\tilde\x)\in\frac1m\Z\), every nonzero discrepancy is at least \(\frac1m\log q_v\).  Replacing \(\tilde\x\) by the twist \((a^{w_i/k_S}\tilde x_i)\) changes \(a_v\) by \(-\ord_v(a)/k_S\); the sum of the stable local terms is unchanged by the product formula, while the ceiling terms can change.
\end{proof}

Under the gerbe \(\pi\colon\cP_\w\to\cP_{\w'}\), \(\w'=\w/d\), one has \(\pi^*\Oo(1)\simeq\Oo(d)\), and functoriality \cite[Rem.~2.16]{ellenberg} together with additivity of the stable height gives \(H^{\can,st}_{\Oo_{\w'}(1)}(\pi\circ x)=d\,H^{\can,st}_{\Oo_{\w}(1)}(x)\), which is \cref{COR2}.

The Northcott property for \(H^{\can}_{\Oo(1)}\) does not follow from that of \(H^{\ff}\) alone, since infinitely many twists lie over each coarse point. It rests on the following lemma.

\begin{lem}\label{lem:kummer-finite}
Let \(L\) be a global function field with finite constant field \(\F\), let \(k\ge1\) and \(B\ge0\). The set of classes \(a\in L^\times/(L^\times)^k\) such that
\[
\sum_{v\,:\,k\,\nmid\,\ord_v(a)}\deg v\;\le\;B
\]
is finite.
\end{lem}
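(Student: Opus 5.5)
The plan is to reduce the finiteness to two standard finiteness facts for the global function field \(L\): finiteness of the divisor class group of degree zero, and finiteness of the unit group modulo \(k\)-th powers. Every \(a\in L^\times\) has divisor \(\operatorname{div}(a)=\sum_v\ord_v(a)\,v\). Split it as \(\operatorname{div}(a)=k\,D_a+R_a\), where \(R_a=\sum_v r_v v\) with \(0\le r_v<k\) the residue of \(\ord_v(a)\) modulo \(k\). The support of \(R_a\) is exactly the set of places with \(k\nmid\ord_v(a)\). The hypothesis therefore says \(\deg R_a\le (k-1)B\) and \(R_a\) is effective. The divisor \(R_a\) depends only on the class of \(a\) modulo \((L^\times)^k\), and \(D_a\) changes by \(\operatorname{div}(b)\) when \(a\) is replaced by \(ab^k\).

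First I will show that only finitely many effective divisors \(R\) can occur. An effective divisor of degree at most \((k-1)B\) is supported on places of degree at most \((k-1)B\). There are finitely many such places of \(L\), since the number of places of degree \(e\) is bounded by the number of \(\F_{q^e}\)-points of a curve. The multiplicities are bounded by \(k-1\), so the possible \(R\) form a finite set.

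Next I fix \(R\) and bound the classes \(a\) with \(R_a=R\). Taking degrees, \(k\deg D_a+\deg R=0\), so \(\deg D_a=-\deg R/k\) is fixed. The class of \(D_a\) in \(\Pic(L)\) lies in a single coset of \(\Pic^0(L)\), which is finite, so \(D_a\) takes finitely many values modulo principal divisors. Suppose \(a\) and \(a'\) share the same \(R\) and \(D_{a'}-D_a=\operatorname{div}(b)\). Then \(\operatorname{div}(a'/(ab^k))=0\), so \(a'=a b^k u\) with \(u\) a unit of the complete curve. Such \(u\) is a nonzero constant in \(\F^\times\). Since \(\F^\times/(\F^\times)^k\) is finite, each fixed \(R\) and each fixed class of \(D_a\) in \(\Pic\) accounts for at most \(\#\F^\times/(\F^\times)^k\) classes in \(L^\times/(L^\times)^k\). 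Combining the three finite counts gives the lemma.

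The step needing the most care is the reconstruction of \(a\) modulo \(k\)-th powers from \((R_a,[D_a])\). Here I must check that the well-defined invariant is the class of \(D_a\) modulo \(\operatorname{div}(L^\times)\), and not modulo \(k\cdot\Pic\). I must also check that the kernel is exactly the constants. The rest is the standard finiteness of \(\Pic^0\) of a curve over a finite field, via its identification with the rational points of the Jacobian, and the counting of places of bounded degree.
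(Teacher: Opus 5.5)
Your proof is correct and is essentially the paper's argument. Your residue divisor $R_a$ is the canonical representative of the paper's homomorphism $a\mapsto\operatorname{div}(a)\bmod k$ into $\Div(L)/k\Div(L)$, whose image is finite for the same reason. Your fiber count via $[D_a]$ and the constants $\F^\times/(\F^\times)^k$ is the paper's kernel analysis, done one fiber at a time without the homomorphism: the paper lands in $\Pic^0(L)[k]$, you land in a coset of $\Pic^0(L)$, and either suffices.
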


\begin{proof}
Consider the homomorphism \(\partial\colon L^\times/(L^\times)^k\to\Div(L)/k\Div(L)\), \(a\mapsto\operatorname{div}(a)\bmod k\). A class in \(\Div(L)/k\Div(L)\) is a function from the places to \(\Z/k\) with finite support, and the condition says that the support of \(\partial(a)\) has degree at most \(B\). There are finitely many places of degree at most \(B\), hence finitely many such classes. It remains to see that \(\ker\partial\) is finite. If \(\operatorname{div}(a)=kD\), then \(\deg D=0\) and the class of \(D\) lies in \(\Pic^0(L)[k]\), which is finite because \(\Pic^0(L)\) is finite for a finite constant field. If this class is trivial, \(D=\operatorname{div}(b)\) and \(a=ub^k\) with \(u\in\F^\times\), so the class of \(a\) lies in the image of \(\F^\times/(\F^\times)^k\), which is finite. Thus \(\ker\partial\) is an extension of a subgroup of \(\Pic^0(L)[k]\) by a quotient of \(\F^\times/(\F^\times)^k\).
\end{proof}

\begin{thm}[Properties of the Canonical Height]\label{thm:can-properties}
On \(\mathfrak{X}_K\) the heights of \cref{def:can-height} satisfy:
\begin{enumerate}[label=(\roman*)]
\item \textbf{Finiteness (Northcott Property)}: For \(\cV=\Oo(1)\), the set \[ \{ x \in \mathfrak{X}_K(L) \mid H^{\can}_{\cV}(x) \leq B \}\] is finite for any finite extension \(L/K\) and \(B > 0\).
\item \textbf{Base change}: Under a separable extension \(L/K\), \(H^{\can,st}_{\cV}(x_L) = [L:K] \cdot H^{\can,st}_{\cV}(x)\). The unstable height \(H^{\can}_\cV\) does not satisfy such a relation in general.
\item \textbf{Functoriality}: If \(f: \mathfrak{X} \to \mathfrak{Y}\) is a morphism of stacks and \(\cW\) a vector bundle on \(\mathfrak{Y}\), then \(H^{\can}_{f^* \cW}(x) = H^{\can}_{\cW}(f \circ x)\).
\item \textbf{Schemes}: If \(\cV\) is pulled back from a scheme, then \(H^{\can}_\cV=H^{\can,st}_\cV\).
\end{enumerate}
\end{thm}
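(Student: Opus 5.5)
The plan is to reduce (iii) and (iv) to the corresponding results of \cite{ellenberg}, using the definition of the heights on \(\mathfrak X_K\) by restriction along \(\iota\). For (iii) and (iv) we compose with \(\iota\) and invoke functoriality \cite[Rem.~2.16]{ellenberg} and \cite[Prop.~2.15]{ellenberg}; in (iii) the tuning stack of \(f\circ x\) is obtained from that of \(x\) by composing \(\bar x\) with \(f\times C\), and \(\bar x^*f^*\cW^\vee=(f\circ\bar x)^*\cW^\vee\). For (iv), if \(\cV=\rho^*\cV_0\) with \(\rho\) a map to a scheme \(Y\), then \(\bar x^*\cV^\vee\) is pulled back from \(C\) along \(\pi\) (the map \(\cC\to Y\times C\) factors through the coarse space \(C\)). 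By the projection formula, \(\pi_*\pi^*=\mathrm{id}\) for a coarse space map with \(\pi_*\Oo_\cC=\Oo_C\), so the two degrees agree.

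For (ii) we use the stable height formula of \cref{prop:ff-spec}(i) in the case \(\cV=\Oo(k)\); in general we use that \(\deg_\cC\) of a pullback is multiplied by the degree of the finite map of tuning stacks over \(C_L\to C\). Concretely, the stable height is \(H^{\ff}_{\w,q}\) up to the factor \(k\), and the absolute normalization over \(L\) gives \(\sum_{w\mid v}e_wf_w=[L:K]\) for each \(v\), hence the factor \([L:K]\). Separability is used only to guarantee that the normalization of the tuning stack in \(L\) is again a tuning stack. Failure of the analogous relation for \(H^{\can}\) is exhibited by the ceiling formula (ii) of \cref{prop:ff-spec}: after a ramified extension with \(e_w=m\), each \(\lceil a_v\rceil\) is replaced by \(m a_v\in\Z\). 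Taking \(\cP(1,2)\) and \(\tilde\x=(1,t)\) therefore gives a strict inequality.

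The main step is (i). By \cref{prop:ff-spec}(ii),(iii), \(H^{\can}_{\Oo(1)}(x)\ge H^{\ff}_{\w,q}(\x)\) (over \(L\), with the same ceiling formula valid since \(\Oo(m)\) is globally generated). Northcott for \(H^{\ff}\) over \(L\) (\cref{thm:FF-finiteness}(i), with \(L\) in place of \(K\) via \(\phi_m\)) gives finitely many coarse points \(\x\). Fix one, of support \(S\), with a representative \(\tilde\x\). By \cref{lem:lifting}(iii) its twists are \(\tilde\x_a=(a^{w_i/k_S}\tilde x_i)\), with \(a\in L^\times/(L^\times)^{k_S}\). We have \(a_v(\tilde\x_a)=a_v(\tilde\x)-\ord_v(a)/k_S\), and the sum of the \(a_v\) is fixed. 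Hence
\[
H^{\can}_{\Oo(1)}(x_a)-H^{\ff}(\x)=\sum_v\bigl(\lceil a_v(\tilde\x_a)\rceil-a_v(\tilde\x_a)\bigr)\log q_v,
\]
and this quantity is at least \(\frac1m\log q_v\) at each \(v\) where \(a_v(\tilde\x_a)\notin\Z\). Outside the finite set of places where \(a_v(\tilde\x)\ne0\), the nonintegrality is equivalent to \(\ord_v(a)/k_S\cdot w_i\) failing to be integral for some \(i\in S\). Since \(\gcd_{i\in S}(w_i/k_S)=1\), this means \(k_S\nmid\ord_v(a)\). So bounded height forces \(\sum_{k_S\nmid\ord_v(a)}\deg v\le mB/\log q+C_\x\), and \cref{lem:kummer-finite} gives finitely many classes \(a\).

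I expect the obstacle to be this last translation, namely checking that nonintegrality of \(\min_i\) in \(a_v\) really detects \(k_S\nmid\ord_v(a)\). At places where \(\ord_v(a)\equiv0\) but some coordinate has nonzero valuation, we simply absorb these finitely many places into the constant. At other places, the minimum is over the \(i\in S\) with \(\ord_v\tilde x_i=0\), and all of these must be used. I would first handle it by noting that \(a_v=-\min_{i\in S}(w_i/k_S)\ord_v(a)/w_i=-\ord_v(a)/k_S\) exactly, since all terms coincide.
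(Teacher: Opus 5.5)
Your proposal is correct and is essentially the paper's proof. Parts (ii)--(iv) rest on the corresponding results of Ellenberg--Satriano--Zureick-Brown, and your ramified-extension example via the ceiling formula is a valid addition the paper does not give. Part (i) is the same argument: Northcott for $H^{\ff}$ over $L$ fixes finitely many coarse points, then the shift $a_v(\tilde\x_a)=a_v(\tilde\x)-\ord_v(a)/k_S$, the lower bound $\frac1m\log q_v$ on each nonzero discrepancy, and \cref{lem:kummer-finite} finish it.

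The detour through $\gcd_{i\in S}(w_i/k_S)=1$ is misphrased, since $w_i\ord_v(a)/k_S$ is always an integer, and it is unnecessary. As in the paper, exclude the finite set $V_0$ of places with $a_v(\tilde\x)\notin\Z$. Outside $V_0$ your shift formula shows directly that $a_v(\tilde\x_a)\notin\Z$ exactly when $k_S\nmid\ord_v(a)$.
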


\begin{proof}
(ii) is \cite[Prop.~2.14]{ellenberg}, (iii) is \cite[Rem.~2.16]{ellenberg}, and (iv) is \cite[Prop.~2.15]{ellenberg}.

For (i) we work over \(L\), with the places of \(L\), and we let \(q\) denote the order of its constant field; \cref{prop:ff-spec} holds over \(L\) with the same proof. Since \(H^{\can}_{\Oo(1)}(x)\ge H^{\ff}_{\w,q}(\x)\), \cref{thm:FF-finiteness} over \(L\) shows that only finitely many coarse points \(\x\) occur. Fix one, of support \(S\), with representative \(\tilde\x\), put \(k=k_S\), and let \(V_0\) be the finite set of places with \(a_v(\tilde\x)\notin\Z\). By \cref{lem:lifting}(iii) the objects over \(\x\) are the orbits \(x_a\) of the vectors \((a^{w_i/k}\tilde x_i)\), \(a\in L^\times/(L^\times)^k\), and
\[
H^{\can}_{\Oo(1)}(x_a)-H^{\ff}_{\w,q}(\x)=\sum_v\delta_v(a),\qquad
\delta_v(a)=\bigl(\lceil y_v\rceil-y_v\bigr)\log q_v,\quad y_v=a_v(\tilde\x)-\frac{\ord_v(a)}{k} .
\]
Since \(k\mid w_i\mid m\) for \(i\in S\), the number \(y_v\) lies in \(\frac1m\Z\), so \(\delta_v(a)\ge\frac1m\log q_v\) whenever \(y_v\) is not an integer. For \(v\notin V_0\) this happens exactly when \(k\nmid\ord_v(a)\). Hence \(H^{\can}_{\Oo(1)}(x_a)\le B\) implies
\[
\sum_{v\,:\,k\,\nmid\,\ord_v(a)}\deg v\;\le\;\frac{mB}{\log q}+\sum_{v\in V_0}\deg v,
\]
and \cref{lem:kummer-finite} leaves finitely many classes \(a\).
\end{proof}

Thus \(H^{\ff}\) is the stable part of the stack height, while \(H^{\prob}\) is the information content \(-\log p_r(\xi)\) of a twist in the special fibre.  The degree-based function has a different origin.

\begin{rem}[The degree-based height]\label{rem:deg-spec}
The degree-based height \(H_{\w,q}^{\deg}\) is not a specialization of \(H^{\can}_\cV\); see \cref{rem:deg-status}. A point of \(X(\Fqr)\), viewed as a constant point over \(\Fqr(t)\), has function-field height zero by \cref{thm:FF-finiteness}. The two functions therefore measure different quantities: \(H^{\ff}\) is a sum of local weighted maxima over the places of \(K\), whereas \(H^{\deg}\) records coordinate extension degrees, with the additional factor \(r_\x\).
\end{rem}

%*********************
\subsection{Entropy and rational isotropy}\label{subsec:stacky-entropy}

The entropies attached to the groupoid \(\mathfrak{X}(\Fqr)\) depend only on the rational isotropy loci, that is, when \(p\nmid w_i\), on the inertia stack (\cref{rem:char}). Recall that the inertia stack is the stack of pairs \((x, g)\) where \(x \in \mathfrak{X}\) and \(g \in \Aut(x)\) is an automorphism (stabilizer element). For weighted hypersurfaces, stabilizers arise from the \(\G_{m}\)-action, and the fixed loci \(\Fix(\e)=X^\e\) record the corresponding rational isotropy. By \cref{prop:inertia}, the profile \(\mu^{\prob}_{q^r}\) and the entropy \(\Ent^{\prob}_{q^r}(X)\) are functions of the point counts \(|\Fix(\e)(\Fqr)|\), \(\e\in\Fqr^\times\), and by \cref{prop:gerbe-entropy} the same holds for \(\Ent^{\can}_{q^r}(\mathfrak X)\), since \(g_\x(r)=\#\{\e\in\Fqr^\times:\x\in\Fix(\e)\}\).

Under reduction of the weights the three entropies behave differently. The stack entropy \(\Ent^{\stack}_{q^r}(\mathfrak{X})\) is invariant (\cref{prop:stack-entropy-inv}); the groupoid entropy \(\Ent^{\can}_{q^r}(\mathfrak{X})\) decreases by at most \(\log\gcd(d,q^r-1)\) (\cref{prop:gerbe-entropy}); and \(\Ent^{\prob}_{q^r}(X)\) does not decrease (\cref{prop:reduction-entropy}). These statements quantify separately the change in the generic stabilizer and the refinement of the isotropy strata.

%*****

\subsection{The function-field height zeta series}\label{subsec:height-zeta-entropy}

For \(X_K(K)\) we use the Laplace transform of the stable height. Since \(H^{\ff}\) takes values in \(\frac{\log q}{m}\Z_{\ge0}\), the natural variable is \(u=q^{-s/m}\).

\begin{defn}
The \textbf{function-field height zeta function} of \(X\) over \(K=\Fq(t)\) is the power series, indexed by the coarse points,
$$Z_{H^{\ff}}(X_K, s) = \sum_{\x \in X_K(K)} e^{-s H^{\ff}_{\w,q}(\x)} = \sum_{k\ge0} N^{\ff}(k)\,u^{k},\qquad u=q^{-s/m},$$
with \(N^{\ff}(k)\) as in \cref{sec:4}. The stacky version \(Z_{H^{\can}}(\mathfrak{X}_K, s)\) is the sum of \(e^{-sH^{\can}_{\Oo(1)}(x)}/|\Aut(x)(K)|\) over isomorphism classes \(x\in\mathfrak X_K(K)\); each level set is finite by \cref{thm:can-properties}(i). We do not study it here.
\end{defn}

Its constant term is \(N^{\ff}(0)=|X(\Fq)|\). Height zeta functions over function fields occur in related counting problems; see, for example, \cite{loeser-motzeta,chambert-loir-tschi}.

\begin{prop}\label{prop:entropy-pole}
Assume that $Z_{H^{\ff}}(X_K, s)$, as a function of \(u\), is meromorphic in a disc \(|u|<\rho^{-1}+\eta\) with a single pole there, simple and located at \(u=\rho^{-1}\), \(\rho>1\). Then \(N^{\ff}(k)=c\rho^k(1+o(1))\), the abscissa of convergence is \(s_0=m\log\rho/\log q\), and
\[
\lim_{B\to\infty}\Ent_B^{\ff}(X) = -\log\bigl(1-\rho^{-1}\bigr)+\frac{\log\rho}{\rho-1}.
\]
\end{prop}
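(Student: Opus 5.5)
Under the stated hypothesis the plan is to combine a standard transfer (singularity-analysis) argument with \cref{prop:ff-geometric}. Write \(F(u)=Z_{H^{\ff}}(X_K,s)=\sum_k N^{\ff}(k)u^k\). A simple pole at \(u=\rho^{-1}\) means \(F(u)=\frac{R}{1-\rho u}+G(u)\) for some residue-type constant \(R\) and some \(G\) holomorphic on the disc \(|u|<\rho^{-1}+\eta\). I first check that \(R>0\). The coefficients \(N^{\ff}(k)\) are nonnegative, and \(F\) has radius of convergence exactly \(\rho^{-1}\), so as \(u\to\rho^{-1}\) from below along the positive axis \(F(u)\to+\infty\). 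Hence the principal part has positive coefficient, and I set \(c:=R>0\).

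Next I extract the coefficients. The principal part contributes \(c\rho^k\) to the coefficient of \(u^k\). Since \(G\) is holomorphic on a disc of radius \(\rho^{-1}+\eta\), Cauchy's estimate on the circle of radius \(\rho^{-1}+\eta/2\) bounds its \(k\)-th coefficient by \(O((\rho^{-1}+\eta/2)^{-k})\), and \((\rho^{-1}+\eta/2)^{-1}<\rho\). This gives \(N^{\ff}(k)=c\rho^k(1+o(1))\), indeed with an exponentially small error. The abscissa statement then follows from Cauchy--Hadamard: the series in \(u\) converges for \(|u|<\rho^{-1}\) and diverges beyond. With \(u=q^{-s/m}\) and \(s\) real, the condition \(q^{-s/m}<\rho^{-1}\) is equivalent to \(s>m\log\rho/\log q\). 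Since the Dirichlet-type series has nonnegative terms, convergence for complex \(s\) is governed by \(\operatorname{Re}s\), so the abscissa is \(s_0=m\log\rho/\log q\).

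Finally, since \(c>0\) and \(\rho>1\), the hypothesis of \cref{prop:ff-geometric} holds. That proposition gives the limit of \(\Ent^{\ff}_B(X)\) directly.

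The only delicate point is ensuring positivity of \(c\), so that \(N^{\ff}(k)\) is genuinely of order \(\rho^k\) and does not vanish or oscillate. This is handled by the Pringsheim-type observation above: nonnegative coefficients force the pole on the positive axis to have a positive principal part. The hypothesis that it is the only pole in the disc then excludes the periodic fluctuations mentioned in \cref{rem:phillips}.
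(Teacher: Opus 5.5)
Your proposal is correct and follows the paper's proof: subtract the principal part $c/(1-\rho u)$, bound the coefficients of the holomorphic remainder by a Cauchy estimate on a slightly larger circle, and then invoke \cref{prop:ff-geometric}. Beyond the paper's argument, you also make explicit two points it leaves tacit: that $c>0$, which follows from the nonnegativity of the $N^{\ff}(k)$, and the value of the abscissa $s_0$, which follows from $|u|=q^{-\operatorname{Re}(s)/m}$.
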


\begin{proof}
Subtracting the principal part \(c/(1-\rho u)\) leaves a function holomorphic in the larger disc, whose coefficients are \(O((\rho^{-1}+\eta')^{-k})\) for some \(\eta'>0\). The rest is \cref{prop:ff-geometric}.
\end{proof}

\begin{rem}
Thus, under the stated single-pole hypothesis, the limiting entropy is determined by the dominant pole. In \cref{exa:P1}, \(Z_{H^{\ff}}=(q+1)+(q-q^{-1})\,q^2u/(1-q^2u)\) is rational with its pole at \(u=q^{-2}\). The coefficients are the cardinalities of the strata $\cH_k^{\ff}$. If several poles lie on the circle \(|u|=\rho^{-1}\), the coefficient asymptotics acquire a periodic factor; the normalized profile may then fail to converge and the entropy may be asymptotically periodic in \(B\) rather than convergent.
\end{rem}

%****************************************************************
\section{Power sums of the mass measure}\label{sec:7}

On the finite set of twists over \(\F_{q^r}\), the probabilistic height is \(-\log p_r\), where \(p_r\) is the normalized mass measure.  We record the standard power-sum identities needed later; the arithmetic input is the formula for \(p_r\) in terms of the isotropy orders.

\subsection{Power sums and escort measures}

\begin{defn}
The \textbf{mass power sum} over \(\Fqr\) is
\[
Z^{\can}(\beta;\Fqr)=\sum_{\xi\in\pi_0(\mathfrak X(\Fqr))}p_r(\xi)^\beta,
\]
where \(\beta\ge0\). For the degree-based height we similarly write
\[
Z^{\deg}(\beta;\Fqr)=\sum_m e^{-\beta m}|\cH_m^{\deg}(X,\Fqr)|.
\]
\end{defn}

By \cref{prop:prob-mass} and \cref{lem:twists},
\begin{equation}\label{eq:partition}
Z^{\can}(\beta;\Fqr) = |X(\Fqr)|^{-\beta}\sum_{\x\in X(\Fqr)} g_\x(r)^{1-\beta}.
\end{equation}
At \(\beta=0\) this is the number of twists \(A_X(q^r)\), and at \(\beta=1\) it equals \(1\).  Put
\[
p_{r,\beta}(\xi)=\frac{p_r(\xi)^\beta}{Z^{\can}(\beta;\Fqr)}.
\]
This is the usual escort distribution.  Its entropy is
\[
S(\beta)=-\sum_\xi p_{r,\beta}(\xi)\log p_{r,\beta}(\xi)
= -\beta\,\frac{\partial}{\partial\beta}\log Z^{\can}(\beta;\Fqr)
 +\log Z^{\can}(\beta;\Fqr).
\]

\subsection{Effect of weight reduction}

The reduction \(\w \to \w / d\) rigidifies the subgroup \(\mu_d\) of the generic stabilizer, which is \(\mu_{d_X}\) with \(d\mid d_X\), equality holding when \(X\) lies in no coordinate hyperplane. By \cref{prop:mass} and \cref{COR3} the value \(Z^{\can}(1;\Fqr)=1\) and the pushforward of \(p_r\) to the coarse space are unchanged, while \(Z^{\can}(\beta;\Fqr)\) for \(\beta\ne1\) records the gerbe.

The entropy identity is the following.

\begin{thm}[Entropy identity]\label{thm:duality}
For the mass power sum \(Z^{\can}(\beta;\Fqr)\),
\[
\Ent_{q^r}^{\can}(\mathfrak{X}) = S(1) = -\left. \frac{\partial}{\partial \beta} \log  Z^{\can}(\beta; \F_{q^r}) \right|_{\beta=1},
\qquad
S(0):=\lim_{\beta\downarrow0}S(\beta)=\log A_X(q^r),
\]
Thus the groupoid entropy is the mean probabilistic height.  At \(\beta=0\) the escort distribution is uniform on the set of twists, while at \(\beta=1\) it is the mass measure.
\end{thm}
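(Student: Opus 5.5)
The plan is to work from the closed form \eqref{eq:partition} of the mass power sum and to treat \(Z^{\can}(\beta;\Fqr)\) as a partition function in the inverse temperature \(\beta\). Write \(N=|X(\Fqr)|\) and \(g_\x=g_\x(r)\). Since \(p_r(\xi)=1/(g_\x N)\) for a twist \(\xi\) over \(\x\) (\cref{prop:prob-mass}), we have
\[
\log Z^{\can}(\beta;\Fqr) = -\beta\log N + \log\sum_{\x} g_\x^{1-\beta}.
\]
This is a smooth function of \(\beta\in\R\), because the sum is finite and each \(g_\x\ge1\). In particular we may differentiate term by term.

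For the identity at \(\beta=1\), differentiate the display directly:
\[
\frac{\partial}{\partial\beta}\log Z^{\can} = -\log N - \frac{\sum_\x g_\x^{1-\beta}\log g_\x}{\sum_\x g_\x^{1-\beta}}.
\]
At \(\beta=1\) the denominator equals \(N\). Hence
\[
-\partial_\beta\log Z^{\can}\big|_{\beta=1}=\log N+\frac1N\sum_\x\log g_\x,
\]
which is \(\Ent^{\can}_{q^r}(\mathfrak X)\) by \cref{prop:gerbe-entropy}.

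For consistency with \(S(1)\), I use two facts. First, \(Z^{\can}(1;\Fqr)=1\), so the \(\log Z\) term in the escort entropy formula vanishes. Second, \(p_{r,1}=p_r\), so \(S(1)\) is by definition the Shannon entropy of \(p_r\). I would also verify the general escort formula once. Substituting \(\log p_{r,\beta}=\beta\log p_r-\log Z\) gives \(S(\beta)=-\beta\,\mathbb E_{p_{r,\beta}}[\log p_r]+\log Z\). Moreover, \(\mathbb E_{p_{r,\beta}}[\log p_r]=\partial_\beta\log Z\), since \(\partial_\beta Z=\sum p_r^\beta\log p_r\).

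For \(\beta\downarrow0\), continuity of \(\beta\mapsto S(\beta)\) on the finite sum does the work. The factor \(\beta\,\partial_\beta\log Z\) tends to \(0\), because \(\partial_\beta\log Z\) stays bounded near \(0\): it is a finite weighted average of the bounded quantities \(\log p_r\). Meanwhile \(\log Z^{\can}(\beta)\to\log Z^{\can}(0)=\log\#\pi_0(\mathfrak X(\Fqr))=\log A_X(q^r)\), by \eqref{eq:partition} at \(\beta=0\) and \cref{lem:twists}(iii). Equivalently, \(p_{r,0}\) is the uniform measure on the \(A_X(q^r)\) twists, whose entropy is \(\log A_X(q^r)\).

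There is no real obstacle. The only point needing care is to keep the sum over twists distinct from the sum over coarse points: each coarse point \(\x\) carries \(g_\x\) twists of equal mass. This is exactly what turns \(\sum_\xi p_r^\beta\) into \(N^{-\beta}\sum_\x g_\x^{1-\beta}\), and it is what makes the derivative produce the mean of \(\log g_\x\).
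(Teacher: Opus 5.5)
Your proposal is correct and matches the paper's argument: $Z^{\can}(1;\Fqr)=1$ and $p_{r,1}=p_r$ give $S(1)=\Ent^{\can}_{q^r}(\mathfrak X)$, differentiating at $\beta=1$ gives the same value, and $p_{r,0}$ is uniform on the $A_X(q^r)$ twists. The only differences are cosmetic: you differentiate the coarse-point form \eqref{eq:partition} and then quote \cref{prop:gerbe-entropy}, whereas the paper differentiates $\log\sum_\xi p_r(\xi)^\beta$ over twists directly; you also write out the continuity argument for $\beta\downarrow0$, which the paper leaves implicit.
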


\begin{proof}
Since \(Z^{\can}(1;\Fqr)=1\) and \(p_{r,1}=p_r\), the displayed entropy formula gives \(S(1)=\Ent^{\can}_{q^r}(\mathfrak X)\).  Differentiating \(\log\sum_\xi p_r(\xi)^\beta\) at \(\beta=1\) gives the same identity.  At \(\beta=0\), \(p_{r,0}\) is uniform on the \(A_X(q^r)\) twists. 
\end{proof}

\begin{exa}
For the hypersurface of \cref{exa:running} with \(q\) odd, \eqref{eq:partition} gives \(Z^{\can}(\beta; \F_q) = (q+1)^{-\beta}\bigl(q+2^{1-\beta}\bigr)\), and
\[
\Ent^{\can}_q(\mathfrak X)=\log(q+1)+\frac{\log 2}{q+1},\qquad S(0)=\log(q+2).
\]
For the weights \((2,2,4)\) and \(q\equiv1\pmod4\) one finds \(\Ent^{\can}_q=\log(q+1)+\frac{q\log2+\log4}{q+1}\), which exceeds the previous value by exactly \(\log 2=\log\gcd(d,q-1)\).
\end{exa}

\subsection{Asymptotic behavior in the extension degree}

By \eqref{eq:partition} and \cref{thm:prob-asymptotics},
\[
\log Z^{\can}(\beta;\Fqr)=(1-\beta)\log M_r+O(\varepsilon_r),
\qquad
M_r:=|X(\Fqr)|\cdot\gcd(d_X,q^r-1),
\]
uniformly for \(\beta\) in compact subsets of \(\R_{\ge0}\).  On a residue class for which \(\Sigma_r\neq\varnothing\), this error is \(O(q^{-rc_r})\); if \(\Sigma_r=\varnothing\), the formula is exact. For each fixed \(r\), the finite sum extends to an entire function of the complex variable \(\beta\).  The estimate shows that the contribution of the non-generic isotropy locus is of order \(q^{-rc_r}\), whereas \(\Ent^{\prob}_{q^r}(X)\) has the additional factor of order \(r\).  The identities above are standard properties of power sums of a finite probability distribution; the arithmetic content is the formula \eqref{eq:partition}.

The generating function in the extension degree \(r\) is the two-variable series of \cref{sec:9}.

%***********************
\section{Height transforms}\label{sec:8}
We use three elementary transforms of the height functions introduced above.  They serve different purposes and will be kept notationally separate.

\subsection{Three transforms}
Three different transforms of a height appear in this paper, and they should be kept apart.

\begin{defn}\label{def:taxonomy}
Let \(\mathcal S\) be a finite or countable set with a function \(H\colon\mathcal S\to\R_{\ge0}\) having finite level sets, and let \(\mu\) be a probability measure with finite support.
\begin{enumerate}[label=(\alph*)]
\item If \(H>0\) on \(\mathcal S\), the \textbf{Dirichlet-type height series} is \(D_H(s)=\sum_{\x\in\mathcal S}H(\x)^{-s}\).
\item The \textbf{Laplace-type height series} is \[L_H(s)=\sum_{\x\in\mathcal S}e^{-sH(\x)}.\]
\item The \textbf{profile zeta function} of \(\mu\) is \(\zeta_\mu(s)=\sum_m\mu(m)^s\).
\end{enumerate}
\end{defn}

The three are used as follows. The Dirichlet type (a) is defined for the degree-based and the probabilistic heights on \(\mathcal S=X(\Fqr)\). Here \(H^{\deg}_{\w,q}>0\) always, by \cref{prop:deg-bounds}. By \cref{prop:prob-formula}, \(H^{\prob}_{\w,q}(\x)=\log|X(\Fqr)|+\log g_\x(r)\), which vanishes precisely when \(|X(\Fqr)|=1\) and \(g_\x(r)=1\); if the unique point carries several twists the height is again positive. We therefore assume \(X(\Fqr)\neq\varnothing\), and exclude the single degenerate case just described, in which the Dirichlet transform must be replaced by the Laplace one. The degenerate case does not occur for \(r\) large when \(X\) is geometrically irreducible of positive dimension, since then \(|X(\Fqr)|\to\infty\) by Lang--Weil. Under these assumptions:
\[
Z_{H_{\w,q}}(X, s; q^r) := \sum_{\x \in X(\Fqr)} \frac{1}{H_{\w,q}(\x)^s},\qquad H_{\w,q}\in\{H^{\deg}_{\w,q},\,H^{\prob}_{\w,q}\}.
\]
It is \emph{not} defined for the function-field height, which lives on \(X_K(K)\) and vanishes exactly on \(X(\Fq)\) (\cref{thm:FF-finiteness}). For \(H^{\ff}_{\w,q}\) one uses the Laplace type (b) on \(\mathcal S=X_K(K)\), which is the function \(Z_{H^{\ff}}(X_K,s)\) of \cref{subsec:height-zeta-entropy}; the Laplace type on the twists with \(H=H^{\prob}\) is the mass power sum \(Z^{\can}(s;\Fqr)\) of \cref{sec:7}, and with \(H=\log|\Aut(\xi)(\Fqr)|\) it is the sum \(\Theta_r(s)\) of \cref{sec:9}. The profile type (c) computes entropies.

For fixed \(r\) the Dirichlet-type series is finite and therefore entire. The dependence on \(r\) becomes relevant only when the extension degree varies. For comparison with height zeta functions over global fields, see \cite{ts-02,loeser-motzeta}.

\subsection{Stratified expression}
Using height strata \( \cH_m(X, \Fqr) := \{\x \in X(\Fqr) \mid H_{\w,q}(\x) = m\} \), the Dirichlet-type series over \( \Fqr \) can be rewritten as:
\[
Z_{H_{\w,q}}(X, s; q^r) = \sum_{m} \frac{|\cH_m(X, \Fqr)|}{m^s} = |X(\Fqr)| \sum_{m} \frac{\mu_{q^r}(m)}{m^s},
\]
where \( \mu_{q^r}(m) \) is the empirical height profile defined in \cref{sec:4}. Its logarithmic derivative at \(s=0\) is the mean logarithmic height,
\[
-\left.\frac{d}{ds}\right|_{s=0}\log Z_{H_{\w,q}}(X, s; q^r) = \sum_m\mu_{q^r}(m)\log m ,
\]
which is a moment of the profile and not its entropy. The entropy is obtained from the profile zeta function:
\[
\Ent_{q^r}(X) = - \sum_m \mu_{q^r}(m) \log \mu_{q^r}(m) = -\left.\frac{d}{ds}\right|_{s=1}\log\zeta_{\mu_{q^r}}(s),
\]
as in information theory \cite{renyi1961}, where \(\frac{1}{1-s}\log\zeta_\mu(s)\) is the R\'enyi entropy of order \(s\). For the probabilistic height on the twists, types (b) and (c) coincide, because \(e^{-H^{\prob}}=p_r\): the Laplace-type series of \(H^{\prob}\) is the profile zeta function of the mass measure. This coincidence is the content of \cref{thm:duality}.

\subsection{Asymptotic analysis}
For the degree-based height the limit distribution \( \mu(m) := \lim_{r \to \infty} \mu_{q^r}(m) \) is identically zero by \cref{prop:escape}, so an asymptotic series built from it vanishes. The appropriate object is of Laplace type and sums over all geometric points, which is possible by the Northcott property of \cref{Pp1}:
\[
Z_{H^{\deg}}^{\infty}(X, s) := \sum_{\x\in X(\overline{\F}_q)} q^{-s\,H^{\deg}_{\w,q}(\x)}.
\]

\begin{thm}%[Abscissa Bound for Degree-Based Height]
For the degree-based height \( H_{\w,q}^{\deg} \) on a hypersurface \(X\subset\bP^n_\w\), the series \( Z_{H^{\deg}}^{\infty}(X, s) \) converges absolutely for
\[
\Re(s) > (n-1)\min_i w_i .
\]
\end{thm}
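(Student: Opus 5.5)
We bound the series by grouping geometric points according to the extension degree \(r_\x\) and using \cref{prop:deg-bounds}. Every geometric point \(\x\in X(\overline{\F}_q)\) has a finite minimal field of definition \(\F_{q^{r}}\) with \(r=r_\x\). By the lower bound of \cref{prop:deg-bounds},
\[
H^{\deg}_{\w,q}(\x)\ \ge\ \frac{r_\x}{\min_i w_i}.
\]
Hence for real \(\sigma=\Re(s)>0\) each term satisfies \(|q^{-sH^{\deg}_{\w,q}(\x)}|\le q^{-\sigma r_\x/\min_i w_i}\).

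The number of geometric points with \(r_\x=r\) is at most \(|X(\Fqr)|\). Indeed, each such point has a representative with coordinates in \(\Fqr\), so it is an \(\Fqr\)-point of the coarse space, and distinct geometric points stay distinct. So the series is dominated by
\[
\sum_{r\ge1}|X(\Fqr)|\,q^{-\sigma r/\min_i w_i}.
\]

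It remains to bound \(|X(\Fqr)|\) uniformly. Since \(X\) is a hypersurface of dimension \(n-1\), we use a crude bound \(|X(\Fqr)|\le C\,q^{r(n-1)}\) with \(C\) independent of \(r\). One route is Lang--Weil applied to each of the finitely many geometric components. A more elementary route is the Schwarz--Zippel-type bound on the affine cone, \(\#\hat X(\Fqr)\le \deg(f)\,q^{rn}\), combined with \(\#(\hat X(\Fqr)\setminus\{0\})=(q^r-1)|X(\Fqr)|\) from \cref{lem:twists}(iii). This gives \(|X(\Fqr)|\le 2\deg(f)\,q^{r(n-1)}\), and it needs no irreducibility hypothesis. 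I will use this second route, since it avoids assuming \(X\) geometrically irreducible.

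The dominating series is then geometric with ratio \(q^{\,n-1-\sigma/\min_i w_i}\). It converges when \(\sigma>(n-1)\min_i w_i\), which gives absolute convergence in the stated half-plane.

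The only delicate point is the point count. I must check that the cone bound applies to a weighted homogeneous \(f\), and it does, since \(f\) is an ordinary nonzero polynomial of total degree at most \(\deg f\). I must also check that the coarse-point correspondence used for the count is injective, which it is because distinct geometric points remain distinct \(\Fqr\)-points. Everything else is the direct substitution above.
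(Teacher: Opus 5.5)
Your argument is correct and is essentially the paper's proof: group the geometric points by $r_\x$, bound how many there are by $|X(\Fqr)|\le C\,q^{r(n-1)}$, use the lower bound $H^{\deg}_{\w,q}(\x)\ge r_\x/\min_i w_i$ from \cref{prop:deg-bounds}, and sum the resulting geometric series. The paper simply asserts the uniform point bound, whereas you justify it explicitly with the Schwarz--Zippel count on the affine cone together with \cref{lem:twists}(iii); that justification is valid and needs no irreducibility hypothesis.
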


\begin{proof}
The number of geometric points with \(r_\x=e\) is at most \(|X(\F_{q^e})|\le C q^{e(n-1)}\). By \cref{prop:deg-bounds} each such point has \(H^{\deg}_{\w,q}(\x)\ge e/\min_iw_i\). Hence the series is dominated by \(C\sum_{e\ge1} q^{e(n-1)-\Re(s)\,e/\min_i w_i}\), which converges for \(\Re(s)>(n-1)\min_iw_i\).
\end{proof}

For the function-field height the asymptotic analysis is that of \cref{subsec:height-zeta-entropy}. We record the Tauberian statement in the form adapted to function fields, where the zeta function is a power series and not a Dirichlet series with real exponents.

\begin{prop}
Suppose \( Z(u)=\sum_{k\ge0} N(k)u^k \), \(N(k)\ge0\), is meromorphic in a disc \(|u|<\rho^{-1}+\eta\) whose only pole there is a simple pole at \( u = \rho^{-1} \), \(\rho>1\). Then
\[
\sum_{k \leq T} N(k) \sim \frac{c\,\rho}{\rho-1}\, \rho^{T} \quad \text{as } T \to \infty \text{ through integers},
\]
for some constant \( c > 0 \). In particular, \( \rho \) governs the growth rate of points of bounded height on \( X \).
\end{prop}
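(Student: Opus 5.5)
The plan is to treat this as a standard transfer theorem for power series with nonnegative coefficients. I will first isolate the dominant singularity and then sum the resulting coefficient asymptotics. The constant \(c\) is the residue datum of the pole. Since \(Z\) has a simple pole at \(u=\rho^{-1}\), I write
\[
Z(u)=\frac{c}{1-\rho u}+R(u),
\]
where \(c=\lim_{u\to\rho^{-1}}(1-\rho u)Z(u)\) and \(R\) is holomorphic in the full disc \(|u|<\rho^{-1}+\eta\). This is exactly the decomposition used in the proof of \cref{prop:entropy-pole}.

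The first step is the coefficient estimate. Choose \(\eta'\in(0,\eta)\). By the Cauchy estimates on the circle \(|u|=\rho^{-1}+\eta'\), the coefficients \(r_k\) of \(R\) satisfy \(r_k=O\bigl((\rho^{-1}+\eta')^{-k}\bigr)\). This bound is \(o(\rho^k)\), indeed exponentially smaller than \(\rho^k\), because \((\rho^{-1}+\eta')^{-1}<\rho\). Hence
\[
N(k)=c\rho^k+r_k=c\rho^k\bigl(1+O(\theta^k)\bigr)
\]
for some \(\theta<1\).

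The second step is positivity of \(c\). The coefficients \(N(k)\) are nonnegative and are asymptotic to \(c\rho^k\), so \(c\ge0\). If \(c=0\), then \(Z=R\) would be holomorphic at \(\rho^{-1}\), contradicting the hypothesis that there is a genuine pole there. So \(c>0\). The same conclusion also follows from Pringsheim's theorem: the radius of convergence \(\rho^{-1}\) is a singularity, and the coefficients are real and nonnegative.

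The third step is summation:
\[
\sum_{k\le T}N(k)=c\,\frac{\rho^{T+1}-1}{\rho-1}+\sum_{k\le T}r_k .
\]
The remainder sum is \(O\bigl(\max(1,(\rho^{-1}+\eta')^{-T})\bigr)=o(\rho^T)\). This gives \(\sum_{k\le T}N(k)\sim \frac{c\rho}{\rho-1}\rho^T\) as \(T\to\infty\) through integers.

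No step here is a real obstacle. The only point needing care is the choice \(\eta'<\eta\), which makes the remainder exponentially subdominant. Uniqueness of the pole in the disc is what excludes other poles on \(|u|=\rho^{-1}\). Such poles would produce the periodic fluctuations noted in the remark after \cref{prop:entropy-pole}.
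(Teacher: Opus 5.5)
Your proposal is correct and follows the paper's argument: subtract the principal part \(c/(1-\rho u)\) as in \cref{prop:entropy-pole}, deduce \(N(k)=c\rho^k+O(\rho_1^k)\) with \(\rho_1<\rho\), and sum the geometric series. You also justify \(c>0\) explicitly: \(c=\lim_k N(k)\rho^{-k}\) is real and nonnegative, and it is nonzero because the pole is genuine. The paper leaves this implicit. One small point: the remainder sum is \(O(T)\) rather than \(O(1)\) in the borderline case \(\rho^{-1}+\eta'=1\). This is harmless, and you can avoid it by choosing \(\eta'\) small enough that \(\rho^{-1}+\eta'<1\).
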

\begin{proof}
As in \cref{prop:entropy-pole}, \(N(k)=c\rho^k+O(\rho_1^k)\) for some \(\rho_1<\rho\), and one sums the geometric series. For Dirichlet series with a single pole on the line of convergence the corresponding statement is the Wiener--Ikehara theorem (cf. \cite{korevaar-tauberian}); it does not apply directly here, since \(u=q^{-s/m}\) is periodic in \(s\).
\end{proof}

\begin{exa}
Let \( X \) be the hypersurface of \cref{exa:running}. Over \(\Fq\) the Dirichlet-type series of the degree-based height is \(Z_{H^{\deg}}(X,s;q)=q+1\), and over \(\F_{q^2}\) it is \((q+1)+(q^2-q)2^{-s}\). The Laplace-type series of the probabilistic height on the twists is \(Z^{\can}(s;\Fq)=(q+1)^{-s}(q+2^{1-s})\) for \(q\) odd.
\end{exa}

\begin{rem}
The probabilistic height \(H^{\prob}_{\w,q}\), due to its logarithmic definition, takes values in the interval \([\log|X(\Fqr)|,\log|X(\Fqr)|+\log\max_iw_i]\). Its Dirichlet-type series is therefore a finite sum over the isotropy values. The transform of \(H^{\prob}_{\w,q}\) used in \cref{sec:9} is instead the Laplace-type series \(\Theta_r(s)\), equivalently the mass power sum \(Z^{\can}(s;\Fqr)\).
\end{rem}

\subsection{Entropy identities}
Under the hypothesis of \cref{prop:entropy-pole}, the dominant pole of the function-field height zeta function determines the limiting entropy \(\lim_B\Ent_B^{\ff}(X)\).  For the finite-field mass measure, the logarithmic derivative of \(Z^{\can}(s;\Fqr)\) at \(s=1\) gives the groupoid entropy by \cref{thm:duality}.

%**********

\section{Sector factorization and applications}\label{sec:9}

We now collect the isotropy-weighted point counts into a generating series and prove the sector factorization.  The entropy identities and functional equations used later are consequences of this product formula.

Throughout, \(\mathfrak{X}= \big[ (\hat X \setminus \{0\}) \big/ \G_m \big]\) denotes the associated stack and \(X\) its coarse space. For \(r\ge1\) the groupoid \(\mathfrak{X}(\Fqr)\) has \(g_\x(r)\) isomorphism classes over each \(\x\in X(\Fqr)\), each with automorphism group of order \(g_\x(r)\). By \cref{prop:prob-mass} the probabilistic height is, up to the constant \(\log|X(\Fqr)|\), the \textbf{isotropy height} \(\log|\Aut(\xi)(\Fqr)|\). We define the height sums
\[
\Theta_r(s)
:= \sum_{\xi\in\pi_0(\mathfrak{X}(\Fqr))} |\Aut(\xi)(\Fqr)|^{-s}
= \sum_{\x\in X(\Fqr)} g_\x(r)^{1-s},
\]
so that \(Z^{\can}(s;\Fqr)=|X(\Fqr)|^{-s}\,\Theta_r(s)\) is the mass power sum of \cref{sec:7}. The \textbf{stacky height zeta function} is the formal series
\[
Z^{\can}_H(\mathfrak{X},s;t)
:=
\exp\Bigl(\sum_{r\ge1}\Theta_r(s)\frac{t^r}{r}\Bigr).
\]
For arbitrary complex \(s\), all powers of zeta functions below are interpreted in the formal group \(1+t\,\mathbb C[[t]]\) via the formal logarithm; equivalently, analytically near \(t=0\) one uses the branch with value \(1\) at \(t=0\).  Rationality in \(t\) is asserted only at the integral specializations for which the exponents are integers.

For a positive integer \(e\) prime to \(p\) let \(o_e=\ordq{e}\) be the multiplicative order of \(q\) modulo \(e\), with the convention \(o_1:=1\) (so that the condition \(o_1\mid r\) is vacuous), and let
\[
J_{1-s}(e) := \sum_{e'\mid e}\mu(e/e')\,(e')^{1-s}
\]
be the Jordan totient function of exponent \(1-s\), so that \(J_1=\phi\) and \(J_0(e)=0\) for \(e>1\).

The sums \(\Theta_r(s)\) are governed not by the locally closed strata of \(X\), but by the closed \textbf{sectors} cut out by the isotropy.

\begin{defn}\label{def:sector}
Let \(E_\w\) be the set of integers \(e\ge1\), prime to \(p\), dividing at least one \(w_i\). For \(e\in E_\w\) put
\[
L_e := \{\x : x_i=0 \text{ for every } i \text{ with } e\nmid w_i\},
\qquad
X^{(e)} := X\cap L_e ,
\]
a closed subscheme of \(X\) defined over \(\Fq\), and let \(\delta_e=\dim X^{(e)}\). Thus \(X^{(1)}=X\), and \(L_e\) is the fixed locus of \(\mu_e\subset\G_m\) acting with the weights \(\w\).
\end{defn}

When \(p\nmid w_i\) for all \(i\), \(X^{(e)}\) is the coarse space of the twisted sector of \(\mathfrak X\) indexed by a primitive \(e\)-th root of unity (\cref{prop:inertia}); in general it is the rational isotropy locus of \cref{rem:char}. The sectors inherit quasi-smoothness.

\begin{lem}\label{lem:sector-qs}
Assume \(p\nmid w_i\) for all \(i\) and let \(e\in E_\w\) with \(X^{(e)}\neq\varnothing\). Then \(X^{(e)}\) is a weighted hypersurface, or a weighted projective space, in \(\bP_{\w_e}\), where \(\w_e\) is the subvector of \(\w\) indexed by \(\{i : e\mid w_i\}\); and if \(X\) is quasi-smooth, so is \(X^{(e)}\).
\end{lem}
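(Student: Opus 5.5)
Write \(I_e=\{i : e\mid w_i\}\), \(J_e=T\setminus I_e\), and \(f\) for the weighted homogeneous equation of \(X\), of weighted degree \(D\). By \cref{def:sector}, \(L_e=V(x_j : j\in J_e)\), and setting \(x_j=0\) for \(j\in J_e\) identifies \(L_e\) with the weighted projective space \(\bP_{\w_e}\) in the coordinates \(x_i\), \(i\in I_e\). It is nonempty because \(e\in E_\w\) forces \(I_e\ne\varnothing\). Then \(X^{(e)}=X\cap L_e\) is cut out in \(\bP_{\w_e}\) by the single polynomial \(f_e:=f|_{x_j=0,\,j\in J_e}\), which is weighted homogeneous of degree \(D\) for \(\w_e\). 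This gives the dichotomy. If \(f_e\equiv 0\), then \(L_e\subset X\) and \(X^{(e)}=\bP_{\w_e}\) is a weighted projective space. Otherwise \(X^{(e)}=V(f_e)\subset\bP_{\w_e}\) is a weighted hypersurface, nonempty by hypothesis. Both descriptions are over \(\Fq\), since the coordinate conditions are.

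For quasi-smoothness, I use the affine-cone criterion: \(V(f)\) is quasi-smooth if and only if the cone \(\hat X\setminus\{0\}\) is smooth, i.e.\ \(\partial f/\partial x_0,\dots,\partial f/\partial x_n\) have no common zero on \(\hat X\setminus\{0\}\) over \(\overline{\F}_q\). The key is the \(\mu_e\)-equivariance of \(f\). Let \(\zeta\) be a primitive \(e\)-th root of unity, which exists because \(p\nmid e\). Since \(f(\zeta^{w_0}x_0,\dots,\zeta^{w_n}x_n)=\zeta^{D}f(x)\), differentiating gives
\[
\zeta^{w_j}\,(\partial_j f)(\zeta\cdot x)=\zeta^{D}\,(\partial_j f)(x).
\]
Take \(x\in L_e\). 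Then \(\zeta\cdot x=x\), because \(\zeta^{w_i}=1\) for \(i\in I_e\) and \(x_j=0\) for \(j\in J_e\). Hence \((\zeta^{w_j}-\zeta^{D})\,\partial_j f(x)=0\).

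Now split on \(D\bmod e\).

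\textbf{Case \(e\nmid D\).} Every monomial of \(f_e\) involves only the \(x_i\) with \(e\mid w_i\), so its weighted degree is divisible by \(e\). Hence \(f_e\equiv0\) and \(X^{(e)}=\bP_{\w_e}\), which is quasi-smooth trivially.

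\textbf{Case \(e\mid D\).} For \(j\in J_e\) we have \(\zeta^{w_j}\ne 1=\zeta^D\), so the equivariance identity forces \(\partial_j f=0\) on \(L_e\). Also \(\partial_i f|_{L_e}=\partial_i f_e\) for \(i\in I_e\). So if some \(x\in\hat L_e\setminus\{0\}\) had \(f_e(x)=0\) and all \(\partial_i f_e(x)=0\), then all partials of \(f\) would vanish at a point of \(\hat X\setminus\{0\}\). This contradicts quasi-smoothness of \(X\). If instead \(f_e\equiv 0\), the conclusion is again that \(X^{(e)}\) is a weighted projective space, which is quasi-smooth.

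The main subtlety is this derivative-vanishing step. It needs \(\zeta^{w_j}\ne\zeta^D\), which holds exactly when \(e\mid D\) and \(e\nmid w_j\), and the other case has to be handled separately as above. It also needs the derivative convention to match the definition of quasi-smoothness through the cone, with Euler's relation unnecessary since \(p\nmid w_i\) is used only to have \(\zeta\) and tame \(\mu_e\). Finally, I would remark that the ambient \(\bP_{\w_e}\) need not be well-formed, but quasi-smoothness is a cone condition and is unaffected by this.
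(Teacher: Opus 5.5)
Your proof is correct. The first half, restricting \(f\) to \(L_e\cong\bP_{\w_e}\) to get either \(f_e\equiv0\) or a weighted hypersurface, is the same as the paper's, but your quasi-smoothness argument takes a different route.

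The paper identifies \(\hat X\cap L_e\) with the scheme-theoretic fixed locus \(\hat X^{\mu_e}\). It then invokes the general theorem that the fixed-point scheme of a finite group of invertible order acting on a smooth scheme is smooth \cite[Prop.~3.4]{Edixhoven1992}, applied to \(\hat X\setminus\{0\}\).

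You instead prove the needed instance by hand. Differentiating \(f(\zeta\cdot x)=\zeta^{D}f(x)\) shows that the normal partials \(\partial_j f\), \(j\in J_e\), vanish on \(L_e\) when \(e\mid D\). Hence a singular point of the cone of \(X^{(e)}\) would be a singular point of \(\hat X\setminus\{0\}\). The complementary case \(e\nmid D\) forces \(f_e\equiv0\).

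What your version buys:
\begin{itemize}
\item It is elementary and self-contained, needing only the Jacobian criterion.
\item It gives information the paper leaves implicit: \(X^{(e)}=\bP_{\w_e}\) whenever \(e\nmid D\).
\item It makes visible that only \(p\nmid e\) is used. That is already built into the definition of \(E_\w\), so the hypothesis \(p\nmid w_i\) is not really needed for this lemma. The paper's argument likewise only uses that \(\mu_e\) is étale.
\end{itemize}

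What the paper's version buys: it is shorter and needs no case split on \(D \bmod e\). It also applies verbatim to any \(\mu_e\)-stable quasi-smooth cone, such as a weighted complete intersection, where the direct Jacobian computation would require more bookkeeping.
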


\begin{proof}
The cone \(\hat X\) is stable under \(\G_m\), hence under \(\mu_e\), and \(\hat X\cap L_e\) is the scheme-theoretic fixed locus \(\hat X^{\mu_e}\); it is the cone over \(X^{(e)}\) inside \(L_e=\bA^{\{i\,:\,e\mid w_i\}}\), where the equation of \(X\) restricts either to \(0\) or to a weighted homogeneous polynomial. Since \(e\mid w_i\) for some \(i\) and \(p\nmid w_i\), one has \(p\nmid e\).  After base change to \(\overline{\F}_q\), the group scheme \(\mu_e\) becomes a constant cyclic group of order \(e\).  The fixed-point scheme of a finite group of order invertible on a smooth scheme is smooth \cite[Prop.~3.4]{Edixhoven1992}.  Hence \((\hat X^{\mu_e}\setminus\{0\})_{\overline{\F}_q}\) is smooth, and therefore \(X^{(e)}\) is quasi-smooth.
\end{proof}

\begin{lem}\label{lem:sector-divis}
Let \(e\in E_\w\), \(r\ge1\) and \(\x\in X(\Fqr)\). Then \(e\mid g_\x(r)\) if and only if \(\x\in X^{(e)}(\Fqr)\) and \(o_e\mid r\). Consequently
\[
\Theta_r(s)=\sum_{e\in E_\w,\ o_e\mid r} J_{1-s}(e)\,\bigl|X^{(e)}(\Fqr)\bigr| .
\]
\end{lem}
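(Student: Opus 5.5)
The plan is to prove the divisibility criterion pointwise and then obtain the formula for \(\Theta_r(s)\) by Möbius inversion over the divisors of \(g_\x(r)\).

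For the first assertion, fix \(\x\in X(\Fqr)\) with support \(S=S(\x)\), so that \(g_\x(r)=\gcd(k_S,q^r-1)\). Since \(e\in E_\w\) is prime to \(p\), we have \(e\mid g_\x(r)\) if and only if \(e\mid k_S\) and \(e\mid q^r-1\). The condition \(e\mid q^r-1\) is exactly \(o_e\mid r\), and this is vacuous for \(e=1\) by the convention \(o_1=1\). The condition \(e\mid k_S=\gcd(w_i:i\in S)\) says that \(e\mid w_i\) for every \(i\) with \(x_i\neq0\). Equivalently, \(x_i=0\) whenever \(e\nmid w_i\), which is \(\x\in L_e\). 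Since \(X^{(e)}=X\cap L_e\) is defined over \(\Fq\), the \(\Fqr\)-points of \(X\) lying on \(L_e\) are exactly the elements of \(X^{(e)}(\Fqr)\). This proves the equivalence.

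For the formula, use the identity \(n^{1-s}=\sum_{e\mid n}J_{1-s}(e)\). This is the Möbius inversion of the defining relation of \(J_{1-s}\), and it holds identically in \(s\), since both sides are finite sums of functions \(e'^{1-s}\). Apply it with \(n=g_\x(r)\). Every divisor \(e\) of \(g_\x(r)\) divides \(q^r-1\), so it is prime to \(p\). It also divides \(k_S\), hence some \(w_i\), so \(e\in E_\w\). Therefore
\[
\Theta_r(s)=\sum_{\x\in X(\Fqr)}\ \sum_{e\in E_\w,\ e\mid g_\x(r)}J_{1-s}(e).
\]
Interchanging the two finite sums and using the first part, the inner count over \(\x\) for fixed \(e\) equals \(|X^{(e)}(\Fqr)|\) when \(o_e\mid r\) and is zero otherwise. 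This gives the stated expression.

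There is no real obstacle. The only point needing care is to check that the index set \(E_\w\) captures every divisor of \(g_\x(r)\): this requires both the prime-to-\(p\) condition and the condition of dividing some weight. It also requires reading \(X^{(e)}(\Fqr)\) as the set of coarse \(\Fqr\)-points on the closed subscheme, which is consistent with the notation \(X(\Fqr)\) of \cref{sec:2}.
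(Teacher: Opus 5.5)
Your proposal is correct and follows the paper's proof essentially step for step. Both arguments split \(e\mid\gcd(k_S,q^r-1)\) into the support condition \(\x\in X^{(e)}\) and the order condition \(o_e\mid r\), then expand \(g_\x(r)^{1-s}=\sum_{e\mid g_\x(r)}J_{1-s}(e)\) and interchange the finite sums. Your explicit check that every divisor of \(g_\x(r)\) lies in \(E_\w\) (it is prime to \(p\) and divides some \(w_i\)) spells out a step the paper compresses into the phrase ``since \(g_\x(r)\) is prime to \(p\)''.
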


\begin{proof}
Write \(S=S(\x)\). One has \(e\mid g_\x(r)=\gcd(k_S,q^r-1)\) if and only if \(e\mid k_S\) and \(e\mid q^r-1\). The first holds if and only if \(e\mid w_i\) for every \(i\in S\), that is, \(x_i=0\) whenever \(e\nmid w_i\), that is, \(\x\in X^{(e)}\); the second holds if and only if \(o_e\mid r\). Since \(g_\x(r)\) is prime to \(p\), M\"obius inversion in the form \(g^{1-s}=\sum_{e\mid g}J_{1-s}(e)\) gives
\(
g_\x(r)^{1-s}=\sum_{e\in E_\w}J_{1-s}(e)\,\mathbf{1}[e\mid g_\x(r)],
\)
and summing over \(\x\in X(\Fqr)\) yields the displayed formula.
\end{proof}

\begin{rem}[The inertia tower]\label{rem:inertia-tower}
The values \(\Theta_r(1-k)\), \(k\ge0\), are the masses of a tower of stacks. Since \(\Aut(\xi)(\Fqr)\) is the cyclic group \(\mu_{g_\x(r)}(\Fqr)\) for a twist \(\xi\) lying over \(\x\), any tuple of its elements commutes, so the \(k\)-fold inertia stack \(\mathbb I^k(\mathfrak X)\) of \cite{Morava-HKR}---whose objects over \(\xi\) are \(k\)-tuples \((g_1,\dots,g_k)\) in \(\Aut(\xi)\), with automorphism group again \(\Aut(\xi)\)---has
\[
\#^{\stack}\bigl(\mathbb I^k(\mathfrak X)(\Fqr)\bigr)
= \sum_{\xi\in\pi_0(\mathfrak X(\Fqr))} |\Aut(\xi)(\Fqr)|^{k-1}
= \sum_{\x\in X(\Fqr)} g_\x(r)^{k}
= \Theta_r(1-k).
\]
Because the stabilizers are abelian, the groupoid of \(\Fqr\)-points of \(\mathbb I(\mathbb I^k(\mathfrak X))\) agrees, object for object, with that of \(\mathbb I^{k+1}(\mathfrak X)\), so the identity \(\Theta_r(-k)=\Theta_r(1-(k+1))\) is, in the present setting, the instance for \(\mathcal Y=\mathbb I^k(\mathfrak X)\) of the general theorem of \cite{han-park}, that the unweighted point count of any finite-type algebraic stack with quasi-separated finite-type diagonal equals the mass of its inertia stack. \cref{thm:main} packages these counts into a single two-variable formal family: for every \(k\ge0\), the specialization \(s=1-k\) is a rational function of \(t\) with Weil-type bounds inherited from the closed sectors.  No rationality assertion is made for general nonintegral \(s\).
\end{rem}

The main statement is the following.

\begin{thm}[Main Theorem] % [Stacky Height--Entropy--Zeta Correspondence]
\label{thm:main}
Let \(X \subset \bP_\w^n\) be a weighted hypersurface over \(\Fq\), and let \(\mathfrak{X}\) be its associated stack.
\begin{enumerate}[label={\textup{(\roman*)}}]
\item
\textup{(Sector factorization.)} The stacky height zeta function is the finite product
\[
Z^{\can}_H(\mathfrak{X},s;t)
=
\prod_{e\in E_\w}
Z\bigl(X^{(e)}\otimes_{\Fq}\F_{q^{o_e}},\,t^{\,o_e}\bigr)^{J_{1-s}(e)/o_e},
\]
over the sectors of \cref{def:sector}, with \(Z(\varnothing,t)=1\). The logarithmic derivative \(t\,\partial_t\log Z^{\can}_H\) is a rational function of \(t\) with at most simple poles, each satisfying
\[
t_0^{-o_e}=\alpha,
\qquad\text{equivalently}\qquad
t_0=\bigl(\e\,\alpha^{1/o_e}\bigr)^{-1},\quad \e^{o_e}=1,
\]
where \(\alpha\) is a Frobenius eigenvalue on \(H^*_c\) of the sector \(X^{(e)}\otimes\F_{q^{o_e}}\), of some integer weight \(j\) with \(0\le j\le2\dim X\). Since \(|\alpha|=q^{o_ej/2}\), this gives \(|t_0|=q^{-j/2}\); but \(t_0^{-1}\) is an \(o_e\)-th root of such an eigenvalue times a root of unity, and need not itself be a Frobenius eigenvalue of \(X\).
For \(s\in\{1,0,-1,-2,\dots\}\) the exponents \(J_{1-s}(e)/o_e\) are integers and \(Z^{\can}_H(\mathfrak{X},s;t)\) is rational in \(t\), with
\[
Z^{\can}_H(\mathfrak{X},1;t)=Z^{\stack}(\mathfrak{X},t)=Z(X,t),
\qquad
Z^{\can}_H(\mathfrak{X},0;t)=Z^{\mathrm{tw}}(\mathfrak{X},t)=\prod_{e\in E_\w}Z\bigl(X^{(e)}\otimes\F_{q^{o_e}},t^{o_e}\bigr)^{\phi(e)/o_e},
\]
the zeta function of the coarse space and the twist zeta function of \cite{paper-1}.

\item
Let \(\w' = \w/d\) be the reduced weight vector, and let \(\mathfrak{X}'\) be the stack of the same hypersurface in \(\cP_{\w'}\). Then
\(
Z^{\can}_H(\mathfrak{X},1;t)
=
Z^{\can}_{H}(\mathfrak{X}',1;t),
\)
and the pushforward of the mass measure to the coarse space, the stacky height \(H^{\stack}\) and the stacky entropy \(\Ent^{\stack}_{q^r}\)
are invariant under the common reduction \(\w\mapsto\w/d\). For \(s\neq1\) one has \(\Theta_r(s)=\Theta'_r(s)\) whenever \(\gcd(d,q^r-1)=1\), and in general the two differ by the contribution of the \(\mu_d\)-gerbe \(\mathfrak X\to\mathfrak X'\).

\item
Let \(r\) be such that \(X(\Fqr)\ne\varnothing\). The value \(\Theta_r(1)\) equals the total stacky mass \(|X(\Fqr)|\) and can be divided out to produce
\(
\Theta^{\reg}_r(s)
=
\Theta_r(s)/\Theta_r(1).
\)
The groupoid entropy satisfies the identity
\[
\Ent^{\can}_{q^r}(\mathfrak{X})
=\log|X(\Fqr)|
- \left.\frac{d}{ds}\right|_{s=1}
  \log \Theta^{\reg}_r(s),
\]
and it is the mean of the probabilistic height with respect to the mass measure.
Thus the distribution of the \(\Fqr\)-twists by height is encoded in the behavior of the stacky height zeta function at \(s=1\).

\item
Assume in addition that \(X\) is geometrically irreducible of dimension \(n-1\ge1\). With \(\Sigma_r\), \(\varepsilon_r\), \(c_r\), \(d_X\), \(\kappa_r\), \(N_r\), and the period \(M\) of \cref{subsec:lang-weil-entropy}, one has \(\varepsilon_r=O(q^{-r})\) and \(\Ent^{\prob}_{q^r}(X)=O(rq^{-r})\).  For \(r\) with \(X(\Fqr)\ne\varnothing\) and \(\varepsilon_r\le1/2\),
\[
h(\varepsilon_r)\le \Ent^{\prob}_{q^r}(X)
\le h(\varepsilon_r)+\varepsilon_r\log(\kappa_r-1)
\]
when \(\kappa_r\ge2\), while the entropy is zero when \(\kappa_r=1\).  Moreover
\[
\Ent^{\can}_{q^r}(\mathfrak X)
=\log|X(\Fqr)|+\log\gcd(d_X,q^r-1)+O(q^{-rc_r}),
\]
and, whenever \(\Sigma_r\ne\varnothing\) and \(N_r\ge1\), along each residue class modulo \(M\),
\[
\varepsilon_r=N_rq^{-rc_r}\bigl(1+O(q^{-r/2})\bigr),\qquad
\Ent^{\prob}_{q^r}(X)=c_rN_r\,r\log q\,q^{-rc_r}\bigl(1+o(1)\bigr).
\]
The locus \(\Sigma_r\) is determined by the non-generic rational isotropy loci \(X^\e\), \(\e\in\Fqr^\times\), \(\e^{d_X}\ne1\); when \(p\nmid w_i\), these are the rational non-generic twisted sectors of the inertia stack of \(\mathfrak X\).
\end{enumerate}
\end{thm}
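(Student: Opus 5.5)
The plan is to derive (i) from \cref{lem:sector-divis} by exchanging the order of summation in the exponent. Substituting the lemma into the definition gives
\[
\log Z^{\can}_H(\mathfrak X,s;t)=\sum_{e\in E_\w}J_{1-s}(e)\sum_{r\ge1,\ o_e\mid r}\bigl|X^{(e)}(\Fqr)\bigr|\frac{t^r}{r}.
\]
Write \(r=o_e r'\). Then \(X^{(e)}(\F_{q^{o_er'}})=\bigl(X^{(e)}\otimes\F_{q^{o_e}}\bigr)(\F_{(q^{o_e})^{r'}})\), and the inner sum becomes \(\frac1{o_e}\sum_{r'}|\cdots|\,(t^{o_e})^{r'}/r'=\frac1{o_e}\log Z(X^{(e)}\otimes\F_{q^{o_e}},t^{o_e})\). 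Exponentiating in the formal group \(1+t\mathbb C[[t]]\) gives the product; empty sectors contribute \(1\). The sum over \(e\) is finite because \(E_\w\) is finite.

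For the pole statement, I will differentiate logarithmically. By \cref{thm:stack-zeta}, applied to the projective scheme \(X^{(e)}\) over \(\F_{q^{o_e}}\) (Grothendieck--Lefschetz plus Deligne), each factor \(Z(X^{(e)}\otimes\F_{q^{o_e}},u)\) is \(\prod_\alpha(1-\alpha u)^{\pm1}\) with \(\alpha\) of weight \(j\le 2\dim X^{(e)}\le2\dim X\). Hence
\[
t\partial_t\log Z^{\can}_H=\sum_e J_{1-s}(e)\sum_\alpha\mp\frac{\alpha t^{o_e}}{1-\alpha t^{o_e}}.
\]
This expression is rational with simple poles at the roots of \(t^{o_e}=\alpha^{-1}\), and the formula for \(|t_0|\) follows. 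At \(s=1-k\), \(k\ge0\), the exponent \(J_{k}(e)/o_e\) must be an integer. The claim is that \(o_e\mid J_k(e)\) for all \(k\ge1\); for \(k=0\), \(J_0(e)=0\) when \(e>1\) and \(J_0(1)=1=o_1\), so that case is fine. Integrality is the step I expect to be the main obstacle, and I would try first the following argument. The group \(\langle q\rangle\subset(\Z/e)^\times\) has order \(o_e\) and acts freely by multiplication on the set of \(k\)-tuples in \((\Z/e)^k\) generating \(\Z/e\) as a group (of order \(J_k(e)\)): if \(q^a v=v\) for a generating tuple \(v\), then \(q^a\) fixes \(\Z/e\), so \(q^a\equiv1\). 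Hence \(o_e\mid J_k(e)\), and rationality follows.

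The specializations are now immediate. At \(s=1\), \(J_0\) kills all \(e>1\), leaving \(Z(X,t)\), which equals \(Z^{\stack}\) by \cref{thm:stack-zeta}. At \(s=0\), \(\Theta_r(0)=\#\pi_0(\mathfrak X(\Fqr))\) by \cref{lem:twists}, and \(J_1=\phi\).

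For (ii), the coarse space is unchanged, so \(s=1\) follows from (i). The invariance of the pushforward of the mass, of \(H^{\stack}\), and of \(\Ent^{\stack}\) comes from \cref{prop:mass}, \cref{prop:stack-height-inv} and \cref{prop:stack-entropy-inv}. When \(\gcd(d,q^r-1)=1\), the values \(g_S(r)\) coincide for \(\w\) and \(\w'\) (as in \cref{COR3}), so \(\Theta_r(s)=\Theta'_r(s)\). For (iii), we have \(\Theta_r(1)=|X(\Fqr)|\), and \(\frac{d}{ds}\log\Theta_r(s)\big|_{s=1}=-|X(\Fqr)|^{-1}\sum_\x\log g_\x(r)\); comparing with \cref{prop:gerbe-entropy} gives the identity, which is equivalently \cref{thm:duality} with \(Z^{\can}=|X|^{-s}\Theta_r\). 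For (iv), the entropy bounds and asymptotics are restatements of \cref{thm:prob-asymptotics}. The formula for \(\Ent^{\can}\) follows from \cref{prop:gerbe-entropy}: \(\log g_\x(r)=\log\gcd(d_X,q^r-1)\) off \(\Sigma_r\), the logarithm is bounded by \(\log\max w_i\) on \(\Sigma_r\), and \(\varepsilon_r=O(q^{-rc_r})\). The description of \(\Sigma_r\) is \cref{prop:inertia}.
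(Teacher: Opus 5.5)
Your proposal is correct and follows the paper's proof essentially step by step. The main steps are the substitution $r=o_er'$ after \cref{lem:sector-divis}, the free action of $\langle q\rangle$ on the $J_k(e)$ elements of order $e$ in $(\Z/e)^k$ for integrality, and the reduction of (ii)--(iv) to \cref{prop:mass}, \cref{prop:stack-height-inv}, \cref{prop:stack-entropy-inv}, \cref{prop:gerbe-entropy}, \cref{thm:prob-asymptotics} and \cref{prop:inertia}. Your explicit formula $t\partial_t\log Z^{\can}_H=\sum_e J_{1-s}(e)\sum_\alpha \mp\alpha t^{o_e}/(1-\alpha t^{o_e})$ is a more explicit form of the paper's remark that substituting $t^{o_e}$ replaces each reciprocal root $\alpha$ by the $o_e$ numbers $\e\alpha^{1/o_e}$.
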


\noindent
Parts (i)--(iii) hold for every weighted hypersurface.  Part (iv) requires geometric irreducibility and positive dimension.  No restriction on the characteristic is needed for these four statements; see \cref{rem:char}.

\begin{proof}[Proof of \cref{thm:main}\textup{(i)}: sector factorization and Weil-type bounds]
By \cref{lem:sector-divis},
\[
\sum_{r\ge1}\Theta_r(s)\frac{t^r}{r}
=\sum_{e\in E_\w}J_{1-s}(e)\sum_{\substack{r\ge1\\ o_e\mid r}}\bigl|X^{(e)}(\Fqr)\bigr|\frac{t^r}{r}
=\sum_{e\in E_\w}\frac{J_{1-s}(e)}{o_e}\sum_{j\ge1}\bigl|X^{(e)}(\F_{(q^{o_e})^{j}})\bigr|\frac{(t^{o_e})^{j}}{j},
\]
where the second equality is the substitution \(r=o_ej\). The inner sum is \(\log Z(X^{(e)}\otimes\F_{q^{o_e}},t^{o_e})\), which gives the product. Each \(X^{(e)}\) is a projective variety over \(\Fq\), so its zeta function is rational by \cite{dwork1960}, and by the Grothendieck--Lefschetz trace formula its reciprocal zeros and poles are Frobenius eigenvalues on \(H^i_c(X^{(e)}_{\overline{\F}_q},\Ql)\), each pure of some integer weight \(j\le i\le 2\dim X^{(e)}\le 2\dim X\) by \cite{deligne-weil2}. Substituting \(t^{o_e}\) for the variable replaces a reciprocal zero \(\alpha\) of \(Z(X^{(e)}\otimes\F_{q^{o_e}},\cdot)\) by the \(o_e\) numbers \(\e\alpha^{1/o_e}\), \(\e^{o_e}=1\), of the same absolute value, whence the statement on the poles of the logarithmic derivative.

For the integrality, \(J_k(e)\) with \(k\ge1\) is the number of elements of order exactly \(e\) in \((\Z/e)^k\); multiplication by \(q\) acts freely on that set with orbits of length \(o_e\), so \(o_e\mid J_k(e)\). For \(s=1\) only \(e=1\) contributes, since \(J_0(e)=0\) for \(e>1\), and the product is \(Z(X,t)\), in agreement with \cref{thm:stack-zeta}. For \(s=0\) one has \(J_1=\phi\) and \(\Theta_r(0)=\#\pi_0(\mathfrak X(\Fqr))\), which is the definition of the twist zeta function \cite[Sec.~8.1]{paper-1}.
\end{proof}

\begin{proof}[Proof of \cref{thm:main}\textup{(ii)}: reduction of the weights]
At \(s=1\) the function is \(Z(X,t)\) by part (i), and the coarse varieties for \(\w\) and \(\w'\) coincide. The statements on the mass measure, the stacky height and the stacky entropy are \cref{prop:mass}, \cref{prop:stack-height-inv} and \cref{prop:stack-entropy-inv}. If \(\gcd(d,q^r-1)=1\) then \(\gcd(k_S,q^r-1)=\gcd(k_S/d,q^r-1)\) for all \(S\), so \(\Theta_r=\Theta_r'\). In general the quotient \(g_S(r)/g'_S(r)\) is the local gerbe index of \cref{COR3}, computed cohomologically in \cite[Cor.~4.3]{paper-1} when \(p\nmid d\); for the hypersurface of \cref{exa:running} and \(\w=(2,2,4)\), \(q\) odd, one has \(\Theta'_1(0)=q+2\) and \(\Theta_1(0)=2q+\gcd(4,q-1)\).
\end{proof}

\begin{proof}[Proof of \cref{thm:main}\textup{(iii)}: entropy--zeta identity]
The sums \(\Theta_r(s)\) are recovered from the zeta function by
\(
\Theta_r(s)=\frac{1}{(r-1)!}\,\partial_t^{\,r}\log Z^{\can}_H(\mathfrak X,s;t)\big|_{t=0}.
\)
From the definition, \(\Theta_r(1)=\sum_\x1=|X(\Fqr)|\), which is the total mass by \cref{prop:mass}, and
\[
-\left.\frac{d}{ds}\right|_{s=1}\log\Theta_r(s) = \frac{1}{|X(\Fqr)|}\sum_{\x\in X(\Fqr)}\log g_\x(r).
\]
By \cref{prop:gerbe-entropy} the right-hand side is \(\Ent^{\can}_{q^r}(\mathfrak X)-\log|X(\Fqr)|\). Equivalently, \(Z^{\can}(s;\Fqr)=|X(\Fqr)|^{-s}\Theta_r(s)\) and the identity is \cref{thm:duality}. The regularized sums \(\Theta_r^{\reg}(s)\) form the moment-generating function, in the variable \(1-s\), of the isotropy height \(\log g_\x(r)\) for the uniform distribution on \(X(\Fqr)\).
\end{proof}

\begin{proof}[Proof of \cref{thm:main}\textup{(iv)}: entropy asymptotics]
The bounds for \(\Ent^{\prob}_{q^r}(X)\) are \cref{thm:prob-asymptotics}. For the groupoid entropy, by \cref{prop:gerbe-entropy} it suffices to estimate the mean of \(\log g_\x(r)\). The points outside \(\Sigma_r\) have \(g_\x(r)=\gcd(d_X,q^r-1)\), and the remaining points, a proportion \(\varepsilon_r\) of \(X(\Fqr)\), have \(1\le g_\x(r)\le\max_iw_i\). Hence the mean differs from \(\log\gcd(d_X,q^r-1)\) by at most \(\varepsilon_r\log\max_iw_i\). The description of \(\Sigma_r\) by the loci \(X^\e\) is \cref{prop:inertia}.
\end{proof}

\subsection{Functional equations}\label{subsec:fe}

The product formula reduces the functional-equation problem to the corresponding equations for the sectors.  Equidimensionality is sufficient under self-duality, and it is also necessary under the additional purity and geometric-irreducibility hypotheses stated below.

Say that a projective variety \(Y\) of dimension \(\delta\) over a finite field \(\F_Q\) is \textbf{self-dual} if \(Z(Y,T)\) satisfies
\(
Z(Y,1/(Q^{\delta}T))=\pm\,Q^{\delta\chi/2}T^{\chi}Z(Y,T)
\)
with \(\chi\) the \(\ell\)-adic Euler characteristic.  Smooth projective geometrically irreducible varieties are self-dual by Poincar\'e duality.  The finite-quotient situation needed for the diagonal examples is proved directly in \cref{prop:diagonal-selfdual}.

\begin{thm}[Functional equation from equidimensional sectors]\label{thm:fe-criterion}
Let \(X\subset\bP^n_\w\) be a weighted hypersurface over \(\Fq\) of dimension \(\delta\), with \(p\nmid w_i\) for all \(i\), and suppose every nonempty sector \(X^{(e)}\otimes\F_{q^{o_e}}\), \(e\in E_\w\), is self-dual.  If every nonempty sector has dimension \(\delta\), then for each
\[
s\in\{1,0,-1,-2,\ldots\}
\]
one has
\[
Z_H^{\can}\!\left(\mathfrak X,s;\frac{1}{q^{\delta}t}\right)
=\pm q^{\delta N_s/2}t^{N_s}Z_H^{\can}(\mathfrak X,s;t),
\qquad
N_s:=\sum_{e\in E_\w}J_{1-s}(e)\,\chi\!\left(X^{(e)}\right).
\]
In particular, at \(s=0\),
\[
Z^{\mathrm{tw}}\!\left(\mathfrak X,\frac{1}{q^{\delta}t}\right)
=\pm q^{\delta N_0/2}t^{N_0}Z^{\mathrm{tw}}(\mathfrak X,t),
\qquad
N_0=\sum_{e\in E_\w}\phi(e)\,\chi\!\left(X^{(e)}\right).
\]
\end{thm}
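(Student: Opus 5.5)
The plan is to apply the sector factorization of \cref{thm:main}(i) and then use self-duality one factor at a time. For \(s\in\{1,0,-1,\dots\}\) the exponents \(J_{1-s}(e)/o_e\) are integers, so
\[
Z_H^{\can}(\mathfrak X,s;t)=\prod_{e\in E_\w}Z\bigl(X^{(e)}\otimes\F_{q^{o_e}},t^{o_e}\bigr)^{J_{1-s}(e)/o_e}
\]
is a finite product of integer powers of rational functions. The empty sectors contribute \(1\) and can be discarded.

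Fix a nonempty sector. Put \(Q=q^{o_e}\), \(Y=X^{(e)}\otimes\F_Q\) and \(T=t^{o_e}\). By hypothesis \(\dim Y=\delta\) and \(Y\) is self-dual. The substitution \(t\mapsto 1/(q^\delta t)\) sends \(T\) to \(1/(q^{o_e\delta}t^{o_e})=1/(Q^\delta T)\). This compatibility is exactly where equidimensionality enters: the center of duality \(Q^{\delta}\) of each factor corresponds to the same \(q^\delta\) in the variable \(t\). Self-duality then gives
\[
Z\bigl(Y,1/(Q^\delta T)\bigr)=\pm Q^{\delta\chi_e/2}T^{\chi_e}Z(Y,T)=\pm q^{o_e\delta\chi_e/2}t^{o_e\chi_e}Z(Y,T),
\]
where \(\chi_e=\chi(Y)\). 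The \(\ell\)-adic Euler characteristic is invariant under extension of scalars, so \(\chi_e=\chi(X^{(e)})\).

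Raising this identity to the integer power \(J_{1-s}(e)/o_e\) cancels the \(o_e\). The factor becomes \(\pm q^{\delta J_{1-s}(e)\chi_e/2}t^{J_{1-s}(e)\chi_e}\) times the original factor. Multiplying over \(e\) gives total exponent \(N_s=\sum_e J_{1-s}(e)\chi(X^{(e)})\) and the prefactor \(\pm q^{\delta N_s/2}t^{N_s}\), as claimed. Since each exponent is an integer, the signs multiply to a single \(\pm1\).

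Two points need care.

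\begin{enumerate}[label=(\alph*)]
\item When \(\delta\chi_e\) is odd, the half-integer power \(Q^{\delta\chi_e/2}\) is interpreted with the sign and square-root convention built into the definition of self-duality. Multiplying the \(J_{1-s}(e)/o_e\)-th powers of these quantities still produces \(q^{\delta N_s/2}\) up to sign, because \(\bigl(q^{o_e\delta\chi_e/2}\bigr)^{J/o_e}=q^{\delta J\chi_e/2}\) as a positive real and any ambiguity is absorbed into \(\pm\).
\item The hypothesis \(p\nmid w_i\) makes \(E_\w\) the set of all divisors of the weights. It is otherwise only needed to ensure the \(o_e\) are defined.
\end{enumerate}

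The specialization \(s=0\) uses \(J_1=\phi\) and the identification \(Z_H^{\can}(\mathfrak X,0;t)=Z^{\mathrm{tw}}(\mathfrak X,t)\) from \cref{thm:main}(i). The main obstacle is only bookkeeping: checking that the substitution in \(t\) matches the substitution in \(T\) factor by factor. This is immediate once all nonempty sectors share the dimension \(\delta\).
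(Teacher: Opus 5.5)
Your proposal is correct and is essentially the paper's own proof. You substitute \(T=t^{o_e}\), so that \(t\mapsto 1/(q^{\delta}t)\) becomes \(T\mapsto 1/(Q_e^{\delta}T)\) because every nonempty sector has dimension \(\delta\); you then apply self-duality factor by factor, raise to the integer exponent \(J_{1-s}(e)/o_e\), and collect \(\sum_e J_{1-s}(e)\chi(X^{(e)})=N_s\). One small correction to your remark (b): the \(o_e\) are defined without the hypothesis \(p\nmid w_i\), since \(E_\w\) consists by definition of integers prime to \(p\), so that hypothesis plays no role in this computation.
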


\begin{proof}
Put \(Q_e=q^{o_e}\) and \(T=t^{o_e}\).  Under \(t\mapsto1/(q^{\delta}t)\), one has \(T\mapsto1/(Q_e^{\delta}T)\).  Hence the self-duality of \(X^{(e)}\otimes\F_{Q_e}\), together with \(\dim X^{(e)}=\delta\), transforms its zeta factor by
\[
Z\!\left(X^{(e)}\!\otimes\F_{Q_e},\frac{1}{Q_e^{\delta}T}\right)
=\pm Q_e^{\delta\chi_e/2}T^{\chi_e}
 Z\!\left(X^{(e)}\!\otimes\F_{Q_e},T\right).
\]
For the stated values of \(s\), the exponent \(J_{1-s}(e)/o_e\) is an integer.  Raising the preceding identity to this exponent and multiplying over \(e\) gives
\[
\sum_e\frac{J_{1-s}(e)}{o_e}\,o_e\chi_e
=\sum_eJ_{1-s}(e)\chi_e=N_s,
\]
which proves the formula.  The specialization \(s=0\) uses \(J_1=\phi\).
\end{proof}

\begin{cor}[Exact criterion under purity and geometric irreducibility]\label{cor:fe-exact}
Assume the hypotheses of \cref{thm:fe-criterion}, and suppose in addition that every nonempty sector is geometrically irreducible and that \(H^i_c\) of every sector is pure of weight \(i\).  Then for every fixed \(s\in\{0,-1,-2,\ldots\}\), the following are equivalent:
\begin{enumerate}[label=\textup{(\roman*)}]
\item every nonempty sector has dimension \(\delta\);
\item \(Z_H^{\can}(\mathfrak X,s;t)\) satisfies a functional equation centered at \(\delta\), of the form
\[
Z_H^{\can}\!\left(\mathfrak X,s;\frac{1}{q^{\delta}t}\right)
=C_s t^{N_s}Z_H^{\can}(\mathfrak X,s;t)
\]
for some nonzero constant \(C_s\).
\end{enumerate}
In particular, this gives a necessary and sufficient criterion for the twist zeta function at \(s=0\).
\end{cor}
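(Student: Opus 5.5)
(i)$\Rightarrow$(ii) is \cref{thm:fe-criterion} with $C_s=\pm q^{\delta N_s/2}$. The work is (ii)$\Rightarrow$(i), which I will prove by contraposition: assuming some nonempty sector has dimension $\delta_e<\delta$, I will show that no functional equation centered at $\delta$ can hold.

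Fix $s\in\{0,-1,-2,\dots\}$. Then $J_{1-s}(e)=J_k(e)$ with $k=1-s\ge1$. The key positivity fact is $J_k(e)>0$ for every $e$, since it counts elements of exact order $e$ in $(\Z/e)^k$. Hence every nonempty sector appears in the product of \cref{thm:main}(i) with a \emph{positive} integer exponent $J_k(e)/o_e$, and no cancellation between sectors can come from signs of exponents.

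I will track the top-weight part of each factor. Because $X^{(e)}$ is geometrically irreducible and projective of dimension $\delta_e$, $H^{2\delta_e}_c$ is one-dimensional with Frobenius eigenvalue $Q_e^{\delta_e}$. Purity places every other eigenvalue of $H^i_c$ at weight $i<2\delta_e$. In $Z(X^{(e)}\otimes\F_{Q_e},t^{o_e})$ this produces the factor $(1-q^{o_e\delta_e}t^{o_e})^{-1}$, whose poles lie on $|t|=q^{-\delta_e}$. The other poles, coming from even $i<2\delta_e$, lie on circles $|t|=q^{-i/2}$ with $|t|>q^{-\delta_e}$. Zeros come from odd $i$, so by purity a pole circle never coincides with a zero circle: the zeros are at half-odd-integer radii exponents and the poles at integer ones. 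Multiplying the factors with positive exponents, the pole set of $Z^{\can}_H(\mathfrak X,s;t)$ is the union of the sector pole sets, and no pole is cancelled by any zero.

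Now let $\delta_{\min}=\min_e\delta_e<\delta$. The circle $|t|=q^{-\delta_{\min}}$ carries a pole, namely the top class of a minimal sector. The global maximum $\max_e\delta_e=\delta$ is attained by $X^{(1)}=X$, so the innermost pole circle is $|t|=q^{-\delta}$. The substitution $t\mapsto 1/(q^\delta t)$ sends $|t|=q^{-a}$ to $|t|=q^{a-\delta}$. A functional equation $Z(1/(q^\delta t))=C_st^{N_s}Z(t)$ would therefore force the pole multiset on radius exponents to be symmetric under $a\mapsto\delta-a$, since the monomial $t^{N_s}$ adds no poles away from $0$ and $\infty$. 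Poles occur only at $|t|=q^{-i/2}$ with $0\le i\le 2\delta_e$, so the pole exponents $a=i/2$ lie in $[0,\delta]$. Symmetry then requires a pole on radius $q^{-(\delta-a)}$ for each pole on radius $q^{-a}$.

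This circle counting by itself is the hardest step, because a full symmetric configuration of pole circles could in principle be assembled from several sectors. To handle it I will use a weighted count instead: compare the total pole multiplicity on $|t|=1$ (weight $0$, $a=0$) with that on $|t|=q^{-\delta}$. On $|t|=q^{-\delta}$ only sectors with $\delta_e=\delta$ contribute, through their top class, giving multiplicity $\sum_{\delta_e=\delta}J_k(e)$. On $|t|=1$ every nonempty sector contributes through its $H^0_c$ eigenvalue $1$, giving $\sum_{e}J_k(e)$ over all nonempty sectors. Purity and irreducibility ensure these counts are exact. If $H^0$ eigenvalues of weight $0$ other than $1$ were to appear they would only increase the $|t|=1$ count, but geometric irreducibility makes $H^0=\Ql$. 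Symmetry forces the two multiplicities to be equal, which is possible only if every nonempty sector has $\delta_e=\delta$, since $J_k(e)>0$. This gives (i). At $s=0$ this is the criterion for $Z^{\mathrm{tw}}$, because $J_1=\phi>0$.
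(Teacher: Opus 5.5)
Your argument is correct and is essentially the paper's proof. It uses the same three ingredients: positivity of $J_{1-s}(e)$ for $s\le 0$; purity, which keeps the pole circles (even degree) apart from the zero circles (odd degree); and the comparison that $t\mapsto 1/(q^{\delta}t)$ forces between the total pole multiplicity $\sum_e J_{1-s}(e)$ on $|t|=1$, coming from the $H^0_c$ classes of all nonempty sectors, and the multiplicity $\sum_{\delta_e=\delta}J_{1-s}(e)$ on $|t|=q^{-\delta}$, coming from the top classes. Your preliminary circle-symmetry discussion is not needed, since the weighted count you then carry out is exactly where the paper goes directly.
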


\begin{proof}
The implication \textup{(i)}\(\Rightarrow\)\textup{(ii)} is \cref{thm:fe-criterion}.  Conversely, fix \(s\le0\).  Then \(J_{1-s}(e)>0\) for every \(e\ge1\).  For a geometrically irreducible sector \(Y=X^{(e)}\otimes\F_{q^{o_e}}\) of dimension \(\delta_e\), the factors coming from \(H^0_c(Y)\) give, after the substitution \(T=t^{o_e}\), a total pole multiplicity \(J_{1-s}(e)\) on the circle of reciprocal radius \(1\).  Purity prevents cancellation of these poles by odd-degree cohomology.  Likewise the top cohomology \(H^{2\delta_e}_c(Y)\) contributes total pole multiplicity \(J_{1-s}(e)\) on the circle of reciprocal radius \(q^{\delta_e}\), again without cancellation by odd-degree cohomology.

A functional equation centered at \(\delta\) carries the reciprocal-pole circle of radius \(1\) to the circle of radius \(q^{\delta}\), preserving total multiplicity.  The former receives the positive contribution \(\sum_eJ_{1-s}(e)\) from all nonempty sectors, whereas the latter receives \(\sum_{\delta_e=\delta}J_{1-s}(e)\).  Equality of these multiplicities is therefore possible only if no nonempty sector has \(\delta_e<\delta\).  Since \(X^{(1)}=X\) has dimension \(\delta\), this is exactly \textup{(i)}.
\end{proof}

The hypothesis of self-duality is automatic in the two families below only after the stated geometric hypotheses are imposed.  For \(X=\bP^n_\w\), every sector is a weighted projective space \(\bP_{\w_e}\), whose zeta function is that of an ordinary projective space of the same dimension \cite[Cor.~2.2]{paper-1}.  For weighted diagonal hypersurfaces, purity and self-duality follow from a tame Fermat cover as follows.

\begin{prop}[Purity and self-duality for diagonal sectors]\label{prop:diagonal-selfdual}
Let
\[
X=\Bigl\{\textstyle\sum_{i=0}^n c_ix_i^{D/w_i}=0\Bigr\}\subset\bP^n_\w,
\qquad c_i\in\Fq^\times,\quad w_i\mid D,\quad p\nmid D.
\]
Then every nonempty sector \(X^{(e)}\) is quasi-smooth, its \(\ell\)-adic cohomology is pure, and it is self-dual in the sense of \cref{thm:fe-criterion}.  Consequently \cref{thm:fe-criterion} applies to \(X\) whenever its nonempty sectors are equidimensional.  If, in addition, all nonempty sectors are geometrically irreducible, then \cref{cor:fe-exact} gives the converse as well.
\end{prop}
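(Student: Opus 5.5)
Fix \(e\in E_\w\) with \(X^{(e)}\ne\varnothing\), and put \(I_e=\{i: e\mid w_i\}\). On \(L_e\) the equation of \(X\) restricts to \(f_e=\sum_{i\in I_e}c_ix_i^{D/w_i}\). The first step is a case split. If \(|I_e|\ge2\), then \(X^{(e)}\) is the weighted diagonal hypersurface \(\{f_e=0\}\subset\bP_{\w_e}\), with the same \(D\). If \(|I_e|=1\), the point \(x_i\ne0\) does not satisfy \(f_e=0\), so the sector is empty. The degenerate cases where \(f_e\) vanishes identically do not arise, since all \(c_i\ne0\).

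Quasi-smoothness of the sector then follows directly. The partials \(\partial f_e/\partial x_i=c_i(D/w_i)x_i^{D/w_i-1}\) vanish simultaneously only at the origin, because \(p\nmid D\) implies \(p\nmid D/w_i\). Alternatively, one can cite \cref{lem:sector-qs}, noting that \(p\nmid w_i\) for all \(i\) since \(w_i\mid D\). Thus it suffices to prove purity and self-duality for a general quasi-smooth weighted diagonal hypersurface \(Y=\{\sum_{i\in I}c_ix_i^{a_i}=0\}\subset\bP_{\mathbf v}\) over \(\F_Q\), where \(a_iv_i=D\).

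The main step is the Fermat cover. Let \(F=\{\sum_{i\in I}c_iy_i^{D}=0\}\subset\bP^{|I|-1}\). This is a smooth projective Fermat hypersurface over \(\F_Q\), smooth because \(p\nmid D\) and all \(c_i\ne0\). The map \(y_i\mapsto x_i=y_i^{v_i}\) is weighted homogeneous of the correct degrees and defines a finite surjective morphism \(\pi\colon F\to Y\) over \(\F_Q\). Over \(\overline{\F}_Q\) it is the quotient by the finite group \(G=\prod_i\mu_{v_i}\) modulo the diagonal \(\mu\)-action. The order of \(G\) divides a power of \(D\) and is therefore prime to \(p\), so \(Y_{\overline{\F}_Q}=F_{\overline{\F}_Q}/G\). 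For a tame finite quotient one has
\[
H^i_c(Y_{\overline{\F}_Q},\Ql)\simeq H^i(F_{\overline{\F}_Q},\Ql)^{G},
\]
equivariantly for Frobenius. Here the Frobenius of \(\F_Q\) normalizes \(G\), because \(G\) is defined over \(\F_Q\) as a group scheme, and the invariants are well defined. Purity of \(H^i(Y)\) of weight \(i\) then follows from Deligne's theorem for the smooth projective variety \(F\).

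For self-duality, note that Poincaré duality on \(F\) is a perfect pairing \(H^i\times H^{2\delta-i}\to\Ql(-\delta)\) that is \(G\)-equivariant, because \(G\) acts by automorphisms that preserve orientation (the trace map is \(G\)-invariant). Restricted to \(G\)-invariants, it remains perfect: since averaging over \(G\) is a projector when \(|G|\) is invertible, the invariants pair nondegenerately with the invariants. This yields the Poincaré duality on \(H^*(Y)\) that is needed. The standard argument, which uses that \(\alpha\mapsto Q^\delta/\alpha\) is a bijection on eigenvalues of \(H^i\) and \(H^{2\delta-i}\) with matching multiplicities, then gives \(Z(Y,1/(Q^\delta T))=\pm Q^{\delta\chi/2}T^{\chi}Z(Y,T)\).

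I expect the main obstacle to be the precise identification \(Y_{\overline{\F}_Q}\cong F/G\). The kernel of the weighted action must be handled carefully: the diagonal \(\G_m\) on \(F\) against the weighted \(\G_m\) on \(Y\), including the common \(\mu\) factor. One must also check that \(\pi\) is generically Galois with group exactly \(G\) modulo that diagonal. I would verify this on the affine cones, where \(\bA^{I}\to\bA^{I}\), \(y\mapsto(y_i^{v_i})\), is the quotient by \(\prod\mu_{v_i}\), and then pass to \(\G_m\)-quotients. The final sentences of the proposition then follow by invoking \cref{thm:fe-criterion} and \cref{cor:fe-exact}.
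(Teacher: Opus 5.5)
Your proposal is correct and follows essentially the same route as the paper: you restrict to the diagonal equation on each sector, get quasi-smoothness from the partials using \(p\nmid D\), and realize each sector as the tame quotient of a smooth Fermat hypersurface by \(G=(\prod_i\mu_{w_i})/\mu_d\), so that its cohomology is the \(G\)-invariant part of the Fermat cohomology. Purity then comes from Deligne, and self-duality from nondegeneracy of Poincar\'e duality on invariants by averaging. You also spell out a few points the paper leaves implicit: the empty case \(|I_e|=1\), Frobenius-stability of the \(G\)-invariants, \(G\)-invariance of the trace map, and the cone-level check that \(\pi\) is the quotient by \(G\).
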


\begin{proof}
Each sector is obtained by setting to zero the coordinates outside \(\{i:e\mid w_i\}\), and the restricted equation is again diagonal.  Thus it is enough to treat \(X\).  Since \(p\nmid D\) and \(w_i\mid D\), every nonzero exponent \(D/w_i\) is prime to \(p\).  The partial derivatives have no common zero on the punctured affine cone, so \(X\) is quasi-smooth.

Consider
\[
\pi\colon\bP^n\longrightarrow\bP^n_\w,
\qquad [y_0:\dots:y_n]\longmapsto[y_0^{w_0}:\dots:y_n^{w_n}].
\]
It is finite and surjective and carries the smooth Fermat hypersurface
\[
F=\Bigl\{\textstyle\sum_i c_i y_i^D=0\Bigr\}\subset\bP^n
\]
onto \(X\).  After base change to \(\overline{\F}_q\), the fibers are the orbits of the finite abelian group \(G=(\prod_i\mu_{w_i})/\mu_d\), where \(d=\gcd(w_0,\ldots,w_n)\) and \(\mu_d\) is embedded diagonally; its order is prime to \(p\), and \(X_{\overline{\F}_q}=F_{\overline{\F}_q}/G\).  If \(\pi:F\to X\) denotes the quotient map, then \(\pi\) is finite, \((\pi_*\Ql)^G=\Ql\), and taking \(G\)-invariants is exact.  Therefore
\[
H^*_c(X_{\overline{\F}_q},\Ql)
\simeq H^*_c(F_{\overline{\F}_q},\Ql)^G.
\]
The cohomology of \(F\) is pure and satisfies Poincar\'e duality.  The pairing restricted to the invariant subspaces is nondegenerate: if an invariant class pairs nontrivially with some class, averaging the latter over \(G\) preserves that pairing.  Thus the invariant cohomology is pure and self-dual, and so is \(X\).  The same argument applies verbatim to every nonempty sector.  For the arithmetic of diagonal hypersurfaces in the unweighted case see \cite{gouvea-yui}.
\end{proof}

\begin{exa}[The ambient space]
For \(X=\bP^n_\w\) the sector \(X^{(e)}\) is \(\bP_{\w_e}\), of dimension \(\#\{i:e\mid w_i\}-1\), which equals \(n\) exactly when \(e\) divides every \(w_i\). So the sectors are equidimensional if and only if each \(e\in E_\w\) divides all of the \(w_i\), that is, if and only if the prime-to-\(p\) parts of \(w_0,\dots,w_n\) are all equal. This is precisely the criterion of \cite[Thm.~7.6]{paper-1}, recovered here from the geometry of the sectors.
\end{exa}

\begin{exa}[A quasi-smooth curve with an exceptional sector]\label{exa:fe-fail}
Let \(p\neq 2,3\) and \(X=\{x_0x_2+x_1^3+x_0^3=0\}\subset\bP(1,1,2)\), of weighted degree \(3\). Its partial derivatives are \(x_2+3x_0^2\), \(3x_1^2\), \(x_0\), which vanish simultaneously only at the origin, so \(X\) is quasi-smooth; it is a rational curve, and it passes through the \(\mu_2\)-point \([0:0:1]\). Here \(E_\w=\{1,2\}\), \(X^{(1)}=X\) has \(q^r+1\) points, and \(X^{(2)}=X\cap\{x_0=x_1=0\}=\{[0:0:1]\}\) is a single point, of dimension \(0\neq 1\). With \(o_2=1\),
\[
Z^{\mathrm{tw}}(\mathfrak X,t)=Z(X,t)\,Z(X^{(2)},t)=\frac{1}{(1-t)^2(1-qt)},
\]
which admits no functional equation under \(t\mapsto1/(qt)\), while \(Z(X,t)=1/((1-t)(1-qt))\) does. The number of twists is therefore \(q^r+2\) for every \(r\), compared with \(q^r+1\) coarse points.
\end{exa}

\begin{exa}[A diagonal surface]\label{exa:fe-diag}
Let \(p\) be odd and \(X=\{x_0^4+x_1^4+x_2^2+x_3^2=0\}\subset\bP(1,1,2,2)\), a weighted diagonal surface of degree \(4\); it is quasi-smooth since the exponents \(4,4,2,2\) are prime to \(p\). Then \(E_\w=\{1,2\}\) and \(X^{(2)}=\{x_0=x_1=0,\ x_2^2+x_3^2=0\}\subset\bP(2,2)\) consists of two points, rational over \(\Fqr\) precisely when \(q^r\equiv1\pmod 4\), so \(o_2=1\) and \(\delta_2=0\neq2\). Accordingly
\[
\#\pi_0(\mathfrak X(\Fqr))=|X(\Fqr)|+\bigl|X^{(2)}(\Fqr)\bigr|,
\]
and the second term equals \(2\) when \(-1\) is a square in \(\Fqr\), and \(0\) otherwise.  The sector dimensions are \(2\) and \(0\), so the product has no functional equation centered at dimension \(2\), although each sector is self-dual.
\end{exa}

\begin{rem}\label{rem:fe-necessity}
The purity hypothesis in \cref{cor:fe-exact} is used only to exclude cancellation of the extreme poles by odd-degree cohomology.  Geometric irreducibility ensures that each nonempty sector contributes exactly one \(H^0\) class and one top class before the multiplicity coming from the Jordan-totient exponent.  Without these hypotheses a more refined criterion would have to keep track of Frobenius orbits of components and possible cancellations; we do not claim such a criterion here.
\end{rem}

\begin{rem}
The constants implicit in (iv) are those of the Lang--Weil estimate for \(X\) and for the components of \(\Sigma_r\), and are controlled by the \(\ell\)-adic Betti numbers of these varieties through the trace formula. When \(p\nmid w_i\), these are the Betti numbers of the stack \(\mathfrak X\) and of its twisted sectors \cite{Behrend}.
\end{rem}

%*********************************************
%***************************************************
\section{Concluding remarks}\label{sec:10}

The main arithmetic object of this paper is the family \(Z_H^{\can}(\mathfrak X,s;t)\).  Its sector factorization reduces the isotropy-weighted coefficients \(\Theta_r(s)\) to ordinary point counts on finitely many closed subvarieties \(X^{(e)}\).  At the integral specializations \(s=1,0,-1,-2,\ldots\) this produces rational functions of \(t\): the coarse-space zeta function at \(s=1\), the twist zeta function at \(s=0\), and the generating functions for iterated-inertia masses at \(s=1-k\).  For general complex \(s\), the product is understood formally, or analytically near \(t=0\); no global rationality in \(t\) is asserted.

The factorization also separates the functional-equation problem into geometric sectors.  Self-duality together with equidimensionality is sufficient for a functional equation centered at \(\dim X\).  If the sectors are in addition pure and geometrically irreducible, \cref{cor:fe-exact} shows that equidimensionality is necessary for every integral specialization \(s\le0\), and hence gives an exact criterion for the twist zeta function within that class.  For weighted diagonal hypersurfaces with \(p\nmid D\), \cref{prop:diagonal-selfdual} verifies purity and self-duality by a finite Fermat cover; equidimensionality remains a genuine additional condition, as \cref{exa:fe-diag} shows.

The entropy statements can also be read from the same isotropy data.  The probabilistic height is the information content of a twist for the normalized mass measure, and the derivative of \(\Theta_r(s)\) at \(s=1\) recovers the groupoid entropy.  The non-generic isotropy entropy has a Lang--Weil leading term on arithmetic subsequences.  Its period is governed jointly by the isotropy periods and the Frobenius orbit lengths of the relevant geometric components.

Finally, the three height constructions play different roles.  The function-field height is the stable height associated with \(\Oo(1)\) on the ambient weighted projective stack in the sense of Ellenberg--Satriano--Zureick-Brown, and the corresponding unstable height has a Northcott property on stack objects by the bounded-conductor Kummer argument of \cref{lem:kummer-finite,thm:can-properties}.  The degree-based height is instead a finite-field coordinate-complexity statistic; its stack-normalized version is invariant under the common reduction of the weight vector, but we do not identify it with an intrinsic Weil height.  These three constructions will therefore remain notationally distinct.

Two questions remain outside the scope of the paper.  For nonintegral \(s\), the factorization defines \(Z_H^{\can}(\mathfrak X,s;t)\) formally, or analytically near \(t=0\), but we do not address continuation in \(t\).  For the function-field height, a rationality theorem for \(Z_{H^{\ff}}(X_K,s)\) would turn the conditional statement \cref{prop:entropy-pole} into an unconditional asymptotic.  We make no claim on either point.

%*********************************************
\bibliographystyle{amsplain}
\bibliography{sh-112}

\end{document}